\newtheorem{theorem}{Theorem}[section]
\newtheorem{lemma}[theorem]{Lemma}
\newtheorem{proposition}[theorem]{Proposition}
\theoremstyle{definition}
\newtheorem{definition}[theorem]{Definition}
\numberwithin{equation}{section}
\begin{document}
\setcounter{page}{1}

\vspace*{2.0cm}
\title[Generalized critical Kirchhoff-type potential systems ]
{ Generalized critical Kirchhoff-type potential systems With Neumann Boundary conditions}
\author[N. Chems Eddine and M. A. Ragusa ]{Nabil CHEMS EDDINE$^{1,*}$,  Maria Alessandra Ragusa $^{2,3,*}$}
\maketitle
\vspace*{-0.6cm}

\begin{center}
{\footnotesize
$^{1}$Laboratory of Mathematical Analysis and Applications, Department of Mathematics, Faculty of Sciences,
Mohammed V University, P.O. Box 1014, Rabat, Morocco.\\
$^2$Dipartimento di Matematica e Informatica, Universitá di Catania Viale Andrea Doria, 6, 95125 Catania, Italy.\\
$^3$RUDN University, 6 Miklukho, Maklay St., 117198 Moscow, Russia.
}\end{center}

\vskip 4mm {\footnotesize \noindent {\bf Abstract.}
	
	In this paper, we consider a class of quasilinear stationary Kirchhoff type potential systems with Neumann Boundary conditions, which involves a general variable exponent elliptic operator with critical growth. Under some suitable conditions on the nonlinearities, we establish existence and multiplicity of solutions for the problem by using the concentration-compactness principle of Lions for variable exponents found in \cite{Bonder,Bonder2} and the Mountain Pass Theorem without the Palais-Smale condition given in\cite{Rabinowitz}.

 \noindent {\bf Keywords.}
Variable exponent spaces, critical Sobolev exponents,  Kirchhoff-type problems, $p$-Laplcian,  $p(x)$-Laplacian, generalized Capillary operator, Neumann  Boundary conditions,  concentration-compactness principle, Palais–Smale condition, Mountain Pass theorem, critical points theory.

 \noindent {\bf 2010 Mathematics Subject Classification.}
35B33, 35D30, 35J50, 35J60, 46E35.}

\renewcommand{\thefootnote}{}
\footnotetext{ $^*$Corresponding author.
\par
E-mail addresses: nab.chemseddine@gmail.com (N. Chems Eddine), mariaalessandra.ragusa@unict.it (M. A. Ragusa) \par
Received , ; Accepted . }

\section{Introduction}

The purpose of this article is to investigate the existence and multiplicity of solutions for the following class of nonlocal quasilinear elliptic systems

\begin{eqnarray}
	\label{s1.1}
	\begin{cases}
		-M_i\Big(\mathcal{A}_i(u_i)\Big)
		\textrm{div}\,\Big(  \mathcal{B}_i(\nabla u_i)\Big)=|u_i|^{s_i (x)-2}u_i+\lambda(x) F_{u_i}(x,u) & \text{in }\Omega, \\
	\mathfrak{N}.M_i\Big(\mathcal{A}_i(u_i)\Big)\mathcal{B}_i(\nabla u_i)=|u_i|^{t_i (x)-2}u_i & \text{on }\partial\Omega ;
	\end{cases}
\end{eqnarray}

for $1\leq i \leq n$  ($n\in \mathbb{N} $), where $\Omega$ is a bounded domain in $\mathbb{R}^N (N\geq 2)$, with smooth boundary $\partial \Omega$, and $\mathfrak{N}$ is the outer unit normal vector on $\partial \Omega$. $\lambda$ is continuous function,  $p_i(x), q_i(x), r_i(x),s_i(x)$ and $t_i(x)$ are Lipschitz continuous real-valued functions such that
\begin{gather}
	1 <p_i^{-}\leq p_i(x) \leq p_i^{+}< q_i^{-}\leq q_i(x) \leq q_i^{+}
	< N,
\end{gather}
and
\begin{gather}
	\gamma_i^{-}\leq \gamma_i(x) \leq \gamma_i^{+}\leq r_i^{-}\leq r_i(x) \leq r_i^{+}\leq s_i^{-}\leq s_i(x) \leq s_i^{+}\leq \gamma_i^{\star}(x)<\infty,
\end{gather}
and
\begin{gather}
	\gamma_i^{-}\leq \gamma_i(x) \leq \gamma_i^{+}\leq r_i^{-}\leq r_i(x) \leq r_i^{+}\leq t_i^{-}\leq t_i(x) \leq t_i^{+}\leq \gamma_i^{ \star\star }(x)<\infty,
\end{gather}
for all $x\in \overline{\Omega}$, where $p_i^-:= \inf_{x\in \overline{\Omega}}p_i(x)$,  $p_i^{+}:= \sup_{x\in \overline{\Omega}}p_i(x)$, and analogously to $r_i^-, r_i^+, q_i^-, q_i^+, \gamma_i^-$,
$\gamma_i^+, s_i^-, s_i^+, t_i^-$ and $t_i^+$, with $\gamma_i(x) =(1-\mathcal{H}(k^3_{i})) p_i(x)+ \mathcal{H}(k^3_{i})q_i(x)$ where $k^3_{i}$ is given in $(\textbf{\textit{H}}_2)$ and

\[
\gamma_i^{\star }(x)=\begin{cases}
	\frac{N \gamma_i(x)}{N-\gamma_i(x)} &\text{for } 	\gamma_i(x)<N ,\\
	+\infty &\text{for }	\gamma_i(x)\geq N,
\end{cases}
\text{    and   }
\gamma_i^{\star\star  }(x)=\begin{cases}
	\frac{(N-1) \gamma_i(x)}{N-\gamma_i(x)} &\text{for } 	\gamma_i(x)<N ,\\
	+\infty &\text{for }	\gamma_i(x)\geq N,
\end{cases}
\]
for all $x\in \overline{\Omega}$,  where  $\mathcal{H}:\mathbb{R}_{0}^{+}\to \left\lbrace 0,1\right\rbrace  $ is given by

\[
\mathcal{H}(k_i)=\begin{cases}
	1 & \text{ if }        	k_i>0,\\
	0 & \text{ if }            k_i<0.
\end{cases}
\]

Moreover, we consider $\mathbf{K}^1_{\gamma_{i}}:= \lbrace x \in \partial\Omega,~  t_i (x) =\gamma_i^{\star\star}(x) \rbrace$ and $\mathbf{K}^2_{\gamma_{i}}:= \lbrace x \in \overline{\Omega}, ~~s_i (x) =\gamma_i^\star(x) \rbrace$, nonempty and disjoint sets. \par

The operator $\mathcal{B}_i   : X_i\to \mathbb{R}^n$, and the operator  $ \mathcal{A}_i : X_i\to \mathbb{R}$,  are respectively defined by  $$\mathcal{B}_i(u_i)= a_i(|\nabla u_i|^{p_i(x)}) |\nabla u_i|^{p_i(x)-2}\nabla u_i, \text{ and }\mathcal{A}_i(u_i)= \displaystyle\int_{\Omega }\dfrac{1}{p_i(x)}A_i(|\nabla u_i|^{p_i(x)})dx,$$

where $X_i$ is the Banach space

$$X_i:= W^{1,p_i(x)}(\Omega)\cap W^{1,\gamma_i(x)}(\Omega),$$
$A_i(.)$ is the function $A_i(t)=\displaystyle\int_{0}^{t}a_i(k)d k$, and the function $a_i(.)$ is described in the
hypothesis $(\textbf{\textit{H}}_1)$.

In this article, we consider the function $ a_i: \mathbb{R}^+ \to \mathbb{R}^+$  satisfying the following hypotheses for all $1\leq i \leq n$ :

\begin{itemize}
	\item[$(\textbf{\textit{H}}_1)$] The function $a_i(.)$ is of   class $C^1$.
	
	\item[$(\textbf{\textit{H}}_2)$] There exist positive constants $k_i^0, k_i^1, k_i^2$ and $k_i^3$ for all $1\leq i\leq n$, such that
	
	$$k_i^0 + \mathcal{H}(k_i^3)k_i^2 \tau^{\frac{q_i(x)-p_i(x)}{p_i(x)}} \leq a_i(\tau ) \leq k_i^1 + k_i^3\tau^{\frac{q_i(x)-p_i(x)}{p_i(x)}},
	$$
	for all $\tau \geq 0$ and for almost every $x\in \overline{\Omega}.$
	
	\item[$(\textbf{\textit{H}}_3)$] There exists $c>0$ such that
	
	$$ \min \left\lbrace a_i(\tau^{p_i(x)})\tau^{p_i(x)-2}, a_i(\tau^{p_i(x)})\tau^{p_i(x)-2}
	+\tau \frac{\partial(a_i(\tau^{p_i(x)})\tau^{p_i(x)-2})}{\partial \tau } \right\rbrace \geq c\tau^{p_i(x)-2},
	$$
	for almost every $x\in \Omega$ and for all $\tau>0$.
	
	\item[$(\textbf{\textit{H}}_4)$]
	There exists positive constants $\beta_i$, $\theta_i$ and $\sigma_i$ for all $i\in \left\lbrace 1,2,...,n\right\rbrace $ such that
	
	$$ A_i(\tau )\geq \frac{1}{\beta_i}a_i(\tau)\tau \text{ with } \gamma_i^+<\theta_i <s_i^-
	\text{ and } \frac{q_i^+}{p_i^+}\leq \frac{\beta_i}{\sigma_i} < \frac{\theta_i}{p_i^+},$$
	for all $\tau \geq 0$ and $\sigma_i$ satisfy $(\mathcal{M}_2)$.
\end{itemize}

The real function $F$ belongs to $C^1(\overline{\Omega }\times \mathbb{R}^{n})$ and $F_{u_i}$ denotes the partial derivative of $F$ with respect to $u_i$.\par

$M_i: \mathbb{R}_0^+ \to \mathbb{R}^+$ is a nondecreasing and continuous function, satisfying
\begin{itemize}
	\item[$(\mathcal{M}_1)$] There exists $\mathfrak{M}_i^0>0$, such that
	$$ M_i(t)\geq \mathfrak{M}_i^0 =M_i(0),\quad  \forall t \in \mathbb{R}_{0}^+, (i=1,2,...,n).$$
	
	\item[$(\mathcal{M}_2)$] There exists $\sigma_i \in (\frac{q_i^+}{\inf\{s_i^-, t_i^-\}}, 1]$ such that
	
	$$ \widehat{M}_i \left(t\right) \geq \sigma_i M_i(t)t, \quad \forall t \in \mathbb{R}_{0}^+;$$
	
	where $\widehat{M}_i\left(t\right):= \int_{0}^{t}M_i(s)ds.$
\end{itemize}

We can see that there are many functions satisfying conditions $ (\mathcal{M}_1)-(\mathcal{M}_2)$, for example  $M(t)=\mathfrak{M}^0 + bt^{\frac{1}{\sigma}}$ with $\sigma \leq 1$, $\mathfrak{M}^0>0$ and $b\geq 0$.

The system (\ref{s1.1}) is related (in the case of a single equation)  to a model firstly proposed by Kirchhoff in 1883 as the stationary version of the Kirchhoff equation

\begin{equation}
	\label{e1.1}
	\rho\frac{\partial^2u}{\partial t^2}-\left( \dfrac{\rho_0}{h}+\dfrac{E}{2L}\int_{0}^{L}\left\vert\dfrac{\partial u(x)}{\partial x}\right\vert^2dx \right)\dfrac{\partial^2u}{\partial x^2}=0,
\end{equation}

which extends the classical D'Alembert's wave equation by considering the small vertical vibrations of a stretched elastic string when the tension is variable and the ends of the string are fixed. A distinguishing feature of equation (\ref{e1.1}) is that the equation contains a nonlocal coefficient $\dfrac{\rho_0}{h}+\dfrac{E}{2L}{\displaystyle\int_{0}^{L}\left\vert\dfrac{\partial u}{\partial x}\right\vert^2dx}$ which depends on the average $\dfrac{1}{2L}{\displaystyle\int_{0}^{L}\left\vert\dfrac{\partial u}{\partial x}\right\vert^2dx}$, and hence the equation is no longer a pointwise equation. The parameters in equation (\ref{e1.1}) possess the following meanings: $u=u(x,t)$ is the transverse string displacement at the space coordinate $x$ and time $t$,  $E$  is the Young modulus of the material (also referred to as the elastic modulus, it measures the strings resistance to being deformed elastically), $\rho$ is the mass density, $L$ is the length of the string, $h$ is the area of cross-section, and $\rho_0$ is the initial tension, see  \cite{Kir}. Almost one century later, Jacques-Louis Lions \cite{Lions} returned to the equation and proposed a general Kirchhoff equation in arbitrary dimension with external force term which was written as

\begin{eqnarray}
	\label{s1.3}
	\begin{cases}
		\frac{\partial^2u}{\partial t^2}-\left(
		a+b \int_{\Omega  }\left| \nabla u\right| ^2dx \right)\Delta u=f(x,u) &\quad \text{ in }\Omega, \\
		u=0 &\quad\text{on }\partial\Omega;
	\end{cases}
\end{eqnarray}
this problem is often called a nonlocal problem because it contains an integral over $\Omega$. This causes some mathematical difficulties which make the study of such a problem particularly interesting. The nonlocal problem models several physical and biological systems, where $u$ describes a process which depends on the average of itself, such as the population density,see \cite{Chipo} and its references therein.
For a more detailed reference on this subject we refer the interested reader to \cite{Arosio,Cavalcanti,Corr2,Heidarkhani,Miao,Perera}.\par

Moreover, the study of differential equations and variational problems driven by nonhomogeneous differential operators have been extensively investigated and received much attention because they can be presented as models for many physical phenomena. We note that the $p(x)$-Laplacian operator is a special case of the divergence form operator $$\textrm{div}\,\Big(a_i (|\nabla u_i|^{p_i(x)})|\nabla u_i|^{p_i(x)-2}\nabla u\Big),$$  for which the natural functional framework is described by the Sobolev space with variable exponent $W^{1,p_i(x)}$. We recall that, in the last two decades, particular attention has been given to variable exponent Lebesgue and Sobolev spaces, $L^{p(x)}$ and $W^{1,p(x)}$, where $p(x)$ is a real function. With the apparition of nonlinear problems in applied sciences and engineering. Lebesgue spaces $L^p$ and   Sobolev spaces $W^{1,p}$ with $p$ constant, has shown its limitations in applications. The variable exponent Lebesgue spaces and  Sobolev spaces has been used in the last decades to model phenomena concerning nonhomogeneus materials, this is, a new field research and reflects a new type of physical phenomena, for example electrorheological fluids (sometimes referred to as "smart fluid"). In these fields,  the exponent $p$ must be allowed to vary. In fact, electrorehological fluids are fluids that dramatically change their mechanical properties at the presence of an electromagnetic field, which have been used in robotics and space technology. Another field of application of these spaces is in image restoration and image processing. Moreover, other applications have emerged in thermorheological fluids, mathematical biology, flow in porous media, polycrystal plasticity, the growth of heterogeneous sand piles, and fluid dynamics, for more details see \cite{Abbas,Akdemir,D3,Gala,Goodrich,H1,Papageorgiou,Ragusa,Ru1,Ru2} and the references therein.\par

Now, we will illustrate the degree of generality of the kind of problems studied here, with adequate hypotheses on the functions $a_i$, in the following we present more some examples of problems which are also interesting from the mathematical point of view and have a wide range of applications in physics and related sciences:

\textbf{Example I.} Considering $a_i\equiv 1$, we have that $a_i$ satisfies the  $(\textbf{\textit{H}}_1),(\textbf{\textit{H}}_2)$ and $(\textbf{\textit{H}}_3)$ with $k^1_{i}=1$  and $k^2_{i}>0$ and $k^3_{i}=0$. Hence, we get the $p(x)$-Laplacian:
$$-\textrm{div}\,\Big(a_i(|\nabla u|^{p(x)})|\nabla u|^{p(x)-2}\nabla u\Big)=- \textrm{div}\,(|\nabla u|^{p(x)-2}\nabla u)= -\Delta_{p(x)}u,$$
which coincides with the usual $p$-Laplacian when $p(x)=p$, and with the Laplacian when $p(x)=2$.

\textbf{Example II.} Considering $a_i(t)= 1+ t^{\frac{q(x)-p(x)}{p(x)}}$,
 we have that $a_i$ satisfies the  $(\textbf{\textit{H}}_1),(\textbf{\textit{H}}_2)$ and $(\textbf{\textit{H}}_3)$ with $k^0_{i}=k^1_{i}=k^2_{i}=k^3_{i}=1$. Hence, we get the $p\&q$-Laplacian:

\begin{align*}
	-\textrm{div}\,\Big(a_i(|\nabla u|^{p(x)})|\nabla u|^{p(x)-2}\nabla u\Big) &= -\textrm{div}\,(|\nabla u|^{p(x)-2}\nabla u) -\textrm{div}\,(|\nabla u|^{q(x)-2}\nabla u),\\
	& = -\Delta_{p(x)}u -\Delta_{q(x)}u.
\end{align*}

This class of operators comes, for example, from a general reaction-diffusion system:
\begin{equation} \label{DCE}
u_t= \textrm{div}[D(u)\nabla u ]+ h(x,u),
\end{equation}
Where  $D(u)=|\nabla u|^{p(x)-2}+|\nabla u|^{q(x)-2}$, and the reaction term $h(x, u)$ is a polynomial of u with variable coefficients. This system has a wide range of applications in physics and related sciences, such as biophysics, plasma physics and chemical reaction design. In such applications, the function $u$ describes a concentration, the first term on the right-hand side of \ref{DCE} corresponds to the diffusion with a diffusion coefficient $D(u)$; whereas the second one is the reaction and relates to source and loss processes. Typically, in chemical and biological applications, (for further details, see \cite{Mahshid,He} references therein).

We continued with other examples that are also interesting from mathematical point of view:

\textbf{Example III.} Considering $a_i(t)= 1+ \frac{t}{\sqrt{1+t^2}}$,we have that $a_i$ satisfies the  $(\textbf{\textit{H}}_1),(\textbf{\textit{H}}_2)$ and $(\textbf{\textit{H}}_3)$ with $k^0_{i}=1$,$k^1_{i}= 2$ and $k^3_{i}=0$, $k^2_{i}>0$. Hence,we obtain the operator $p(x)$-Laplacian like or so-called the generalized Capillary operator (which is essential in applied fields like industrial, biomedical and pharmaceutical) see \cite{Ni}:

$$-\textrm{div}\,\Big(a_i(|\nabla u|^{p(x)})|\nabla u|^{p(x)-2}\nabla u\Big)= -\textrm{div}\,\Big( \Big(1+ \frac{|\nabla u|^{p(x)}}{\sqrt{1+|\nabla u|^{2p(x)}}}\Big)|\nabla u|^{p(x)-2}\nabla u\Big).$$

\textbf{Example IV.} Considering $a_i(t)= 1+ \frac{1}{(1+t)^{\frac{p(x)-2}{p(x)}}}$,we have that $a_i$ satisfies the  $(\textbf{\textit{H}}_1),(\textbf{\textit{H}}_2)$ and $(\textbf{\textit{H}}_3)$ with $k^0_{i}=1$,$k^1_{i}= 2$ and $k^3_{i}=0$, $k^2_{i}>0$. Hence,we obtain the  the generalized mean curvature operator :

$$-\textrm{div}\,\Big(a_i(|\nabla u|^{p(x)})|\nabla u|^{p(x)-2}\nabla u\Big)= -\textrm{div}\,\Big( |\nabla u|^{p(x)-2}\nabla u + \dfrac{|\nabla u|^{p(x)-2}\nabla u}{(1+ |\nabla u|^{p(x)})^{\frac{p(x)-2}{p(x)}}}\Big).$$

\textbf{Example V.} Considering $a_i(t)= 1+ t^{\frac{q(x)-p(x)}{p(x)}} +\frac{1}{(1+t)^{\frac{p(x)-2}{p(x)}}}$, we have that $a_i$ satisfies the  $(\textbf{\textit{H}}_1),(\textbf{\textit{H}}_2)$ and $(\textbf{\textit{H}}_3)$ with $k^0_{i}=1$,$k^1_{i}= 2$ and $k^3_{i}=k^2_{i}=1$. Hence,we obtain the operator

\begin{align*}
	-\textrm{div}\,\Big(a_i(|\nabla u|^{p(x)})|\nabla u|^{p(x)-2}\nabla u\Big) &= -\textrm{div}\,(|\nabla u|^{p(x)-2}\nabla u) -\textrm{div}\,(|\nabla u|^{q(x)-2}\nabla u)-\textrm{div}\,\Big( \dfrac{|\nabla u|^{p(x)-2}\nabla u}{(1+ |\nabla u|^{p(x)})^{\frac{p(x)-2}{p(x)}}}\Big),\\
	& = -\Delta_{p(x)}u -\Delta_{q(x)}u-\textrm{div}\,\Big( \dfrac{|\nabla u|^{p(x)-2}\nabla u}{(1+ |\nabla u|^{p(x)})^{\frac{p(x)-2}{p(x)}}}\Big).
\end{align*}

On the one hand, it’s well known that the class of nonlinear elliptic problems with constant critical exponents in bounded or unbounded domain occupies a considerable place in the literature, which was discussed for the first time in the seminal paper  \cite{Brezis} by Brezis and Nirenberg. Afterward, Lions \cite{Lions1} established the concentration-compactness principle in the limit case in the calculus of variation and it became one of the main techniques played an important role in order to deal with such issues. Several results have been obtained by variational methods, thus, it would be interesting to refer the reader to some works for  more related results, we refer the interested readers to \cite{Fu,Li} and references therein.\par

When $M_i\equiv1$, $a_i\equiv1$ (with $k^1_{i}=1$  and $k^2_{i}>0$ and $k^3_{i}=0$ for i=1 or 2) and $p_i\neq2$ (a constant in system of two equations),  Djellit and Tas \cite{Djellit} established the existence of nontrivial weak solutions for the systems

	\begin{eqnarray*}
	\begin{cases}
		-\Delta_p u=f(x)|u|^{p^{\star}-2}u+ \lambda F_{u}(x,u,v), & \text{in }\Omega, \\
			-\Delta_q v=g(x)|v|^{q^{\star}-2}v+ \lambda F_{v}(x,u,v), & \text{in }\Omega,\\
	  u,v \to 0, \quad\text{as } |x|\to \infty ;
	\end{cases}
\end{eqnarray*}

for all $\lambda\in (0,\lambda_1)$ by using Lions's principe with mountain pass theorem. here $\lambda_1$ is the first eigenvalue of the system
	\begin{eqnarray*}
	\begin{cases}
		-\Delta_p u=f(x)|u|^{p^{\star}-2}u+ \lambda F_{u}(x,u,v), & \text{in }\mathbb{R}^N, \\
		-\Delta_q v=g(x)|v|^{q^{\star}-2}v+ \lambda F_{v}(x,u,v), & \text{in } \mathbb{R}^N,\\
		u,v \to 0, \quad\text{as } |x|\to \infty, \quad u>0, v>0.
	\end{cases}
\end{eqnarray*}

Lalilia, Tasa and Djellit \cite{Lalilia}, inspected in detail the following system on bounded set of $\mathbb{R}^N$ with Dirichlet boundary condition

	\begin{eqnarray*}
	\begin{cases}
		-\Delta_{p(x)} u=|u|^{\alpha(x)-2}u+ \lambda F_{u}(x,u,v), & \text{in }\Omega , \\
		-\Delta_{q(x)} v=|v|^{\beta(x)-2}v+ \lambda F_{v}(x,u,v), & \text{in } \Omega,\\
		u= 0,\quad v=0,  &\text{on } \partial \Omega,
	\end{cases}
\end{eqnarray*}

where $\lambda$ is a positive parameter and $\alpha, \beta, p,q : \overline{\Omega}\to \mathbb{R} $ are Lipschitz continuous
functions verifying
 $1\leq  \alpha(x )\leq p^{\star}(x) \text{ and }1\leq  \beta (x )\leq q^{\star}(x)$,for all $x$ in $\Omega$. They proved the existence of solutions under suitable assumptions on the potential F based on variational argument.

When $M_i$ satisfying some conditions, $a_i\equiv1$ (with $k^1_{i}=1$  and $k^2_{i}>0$ and $k^3_{i}=0$),  The author \cite{chems1}, established the existence of nontrivial weak solutions for the systems

\begin{eqnarray}
	\label{}
	\begin{cases}
		-M_i\left(\displaystyle\int_{\Omega  }\dfrac{1}{p_i(x)}|\nabla u_i|^{p_i(x)}\right)
		\Delta_{p_i(x)}u_i=|u_i|^{q_i (x)-2}u_i +\lambda F_{u_i}(x,u_1,u_2,...,u_n) &\quad \text{ in }\Omega, \\
		u_i=0 &\quad\text{on }\partial\Omega;
	\end{cases}
\end{eqnarray}

for ($1\leq i\leq n$), for all  $\lambda \geq \lambda_{\star} $ by using Lions's principe with mountain pass theorem, where $\lambda_1$ isa positive constant and  $p_i,q_i : \overline{\Omega}\to \mathbb{R} $ are Lipschitz continuous
functions verifying $1\leq  q_i (x )\leq p_i^{\star}(x)$,for all $x$ in $\Omega$.\par

The author \cite{chems0}, showed the existence of infinite solutions  for a class of Kirchhoff-Type Potential Systems with critical exponent :

\begin{eqnarray*}
	\begin{cases}
		-M_i\Big(\mathcal{A}_i(u_i)\Big)
		\textrm{div}\,\Big(  \mathcal{B}_i(\nabla u_i)\Big)=|u_i|^{s_i (x)-2}u_i +\lambda F_{u_i}(x,u) & \text{in }\Omega, \\
		u=0 & \text{on }\partial\Omega;
	\end{cases}
\end{eqnarray*}

 for ($1\leq i\leq n$), where $\Omega  \subset \mathbb{R}^N$ is is a bounded domain with a smooth boundary $\partial \Omega$, $N\geq 2$, $\lambda$ is positive parameter, $p_i\in C(\overline{\Omega})$ and  $F\in C(\mathbb{R}^N\times\mathbb{R}^n,\mathbb{R})$. Existence and multiplicity results are subjected to some natural growth conditions which guarantee the Mountain Pass geometry and Palais-Smale condition.

 Our objective in this paper, is to study the existence and multiplicity of solutions for the nonlocal problem \ref{s1.1}. Precisely,  the main theorems extend in several directions previous results recently appeared in the literature, see for example \cite{Alves,chems0,chems1,Hurtado,Zhang}, and references therein. As we will see in the next sections, there are three main difficulties in our situation.  The first one  is that our problem involves the critical growth that  the lack of compactness in Sobolev embedding.  The second one comes from the appearance of the nonlocal term $M_i$, which causes some mathematical difficulties because \ref{s1.1} is no longer a pointwise identity. Finally, we can see that problem \ref{s1.1} is considered with non-standard growth conditions. This leads to the fact that the operators appeared in the problem are not homogeneous.  To overcome the above difficulties, we first use a variant of concentration-compactness principle on variable exponents Sobolev spaces extended by Bonder and Silva \cite{Bonder}, and the same principle to the variable exponent spaces from the point of view of the trace, extended by Bonder and Silva \cite{Bonder2}.
 Then, applying variational methods combined with mountain pass theorem  and the Rabinowitz’s symmetric mountain pass theorem, we obtain some existence and multiplicity results for the problems which involves a general variable exponent elliptic operator, see Theorems \ref{thm2.1} and \ref{thm2.2}.

\textit{Organization of the paper} The rest of the paper is organized as follows: in section 2 we   give some preliminary results and  state the main results and section 3 is dedicated to prove the main results.

%------------------------------------------------------------------------------------------------------
\section{Preliminaries and basic notations}\label{sec2}

In this section, we review some preliminary  basic results regarding Lebesgue and Sobolev spaces with variable exponent. We refer the book \cite{D3}, and the papers by O. Kov\'{a}\v{c}ik and J. R\'{a}kosn\'{i} \cite{K1},and by X. Fan and D. Zhao \cite{F3}, for more detailed properties.\par

Throughout this paper we assume $p \in C(\overline{\Omega})$, $p(x)>1 $, and $\Omega $ a bounded domain of $\mathbb{R}^N$. Write
\begin{equation*}
	C_{+}(\overline{\Omega})= \{ p ;  p \in C(\overline{\Omega}) , p(x)> 1 \quad \text{ for a.e. } x \in \overline{\Omega} \} .
\end{equation*}
and

\begin{equation*}
	L_{+}^{\infty}(\Omega)= \{ p ; p \in   L^{\infty}(\Omega) , p(x)> 1 \quad \text{ for a.e. } x \in \Omega \} .
\end{equation*}

For each $ p \in C_{+}(\overline{\Omega})$ we define

\begin{gather*}
	p^{+}= \sup_{x\in \Omega}p(x) \quad\text{ and } p^{-}= \inf_{x\in \Omega}p(x) .
\end{gather*}
For any $p \in C_{+}(\overline{\Omega})$, we define the variable exponent Lebesgue space as

\begin{gather*}
	L^{p(x)}(\Omega)= \{ u : \quad \text{ u is a measurable real-valued function and } \int_{\Omega}|u(x)|^{p(x)}dx <\infty \},
\end{gather*}

endowed with the Luxemburg norm

\begin{equation*}
	|u|_{p(x)}:=|u|_{L^{p(x)}} = \inf \left\{ \mu >0 ; \int_{\Omega }\left|\frac{u(x)}{\mu}\right|^{p(x)} dx \leq 1 \right \}.
\end{equation*}

\subsection*{Remark 1} Variable exponent Lebesgue spaces resemble to classical Lebesgue spaces in many respects, the inclusions between Lebesgue spaces are naturally generalized, that is, if $0< ~mes(\Omega)<\infty$ and $p,q$ are variable exponents such that $p(x)<q(x)$ a. e. in $\Omega$, then there exists a continuous embedding
$L^{q(x)}(\Omega )\hookrightarrow L^{p(x)}( \Omega ).$

On the other hand, the variable exponent Sobolev space $W^{1,p(x)}(\Omega)$ is defined by
\[
W^{1,p(x)}(\Omega)=\{ u\in L^{p(x)}(\Omega):| \nabla
u| \in L^{p(x)}(\Omega)\},
\]
and is endoweded with the norm
\[
\| u\| _{1,p(x)}:=\| u\|_{W^{1,p(x)}(\Omega)}
=|u| _{p(x)}+| \nabla u| _{p(x)},   ~~~~ \forall u\in W^{1,p(x)}(\Omega ).
\]

It is well known that  the spaces $L^{p(x)}(\Omega)$ and $W^{1,p(x)}(\Omega )$  are separable and reflexive Banach spaces.

\begin{proposition}[H\"older Inequality,
	see \cite{D3,F3}] \label{prop1}
	The conjugate space of
	$L^{p(x)}(\Omega)$ is $L^{p'(x)}(\Omega)$, where
	\[
	\frac{1}{p(x)}+\frac{1}{p'(x)}=1.
	\]
	For any $(u,v)\in L^{p(x)}(\Omega)\times L^{p'(x)}(\Omega)$,
	we have
	\[
	| \int_{\Omega}u(x)v(x)dx|
	\leq \Big(\frac{1}{p^{-}}+ \frac{1}{(p')^{-}}\Big)| u|_{p(x)}| v| _{p'(x)}
	\leq 2| u| _{p(x)}| v|_{p'(x)}.
	\]
	Moreover, if $h_1,h_2,h_3:\overline{\Omega}\to (1,\infty)$ are Libschitz continuous functions such that
	\[
	\frac{1}{h_1(x)}+\frac{1}{h_2(x)}+\frac{1}{h_3(x)}=1,
	\]
	then for any $u \in L^{h_1(x)}(\Omega)$,$ v\in L^{h_2(x)}(\Omega)$,
	$ w\in L^{h_3(x)}(\Omega)$ the following inequality holds
	
		\[
	\int_{\Omega}| uvw|dx
	\leq \Big(\frac{1}{h_1^{-}}+ \frac{1}{h_2^{-}}+\frac{1}{h_3^{-}}\Big)| u|_{h_1(x)}| v| _{h_2(x)}| w| _{h_3(x)}.
	\]
	
\end{proposition}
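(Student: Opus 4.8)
The statement to prove is the Hölder inequality for variable exponent Lebesgue spaces, including the two-function version and the three-function generalization.

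\medskip

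The plan is to first establish the two-function inequality and then bootstrap it to the three-function case by iteration. The starting point is the classical Young inequality $ab \leq \frac{a^{p}}{p} + \frac{b^{p'}}{p'}$ valid for $a,b \geq 0$ and conjugate exponents $p,p' > 1$, applied pointwise with the variable exponents $p(x)$ and $p'(x)$. First I would reduce to the normalized case: assume without loss of generality that $|u|_{p(x)} > 0$ and $|v|_{p'(x)} > 0$ (the inequality is trivial if either vanishes, since then the corresponding function is zero a.e.), and consider $\tilde u(x) = u(x)/|u|_{p(x)}$ and $\tilde v(x) = v(x)/|v|_{p'(x)}$. By the definition of the Luxemburg norm and the fact that the modular $\rho_{p(x)}(w) := \int_\Omega |w(x)|^{p(x)}\,dx$ satisfies $\rho_{p(x)}(w) \leq 1$ whenever $|w|_{p(x)} \leq 1$, one gets $\int_\Omega |\tilde u(x)|^{p(x)}\,dx \leq 1$ and $\int_\Omega |\tilde v(x)|^{p'(x)}\,dx \leq 1$.

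\medskip

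Next I would apply Young's inequality pointwise: for a.e. $x \in \Omega$,
\[
|\tilde u(x)\tilde v(x)| \leq \frac{|\tilde u(x)|^{p(x)}}{p(x)} + \frac{|\tilde v(x)|^{p'(x)}}{p'(x)} \leq \frac{|\tilde u(x)|^{p(x)}}{p^{-}} + \frac{|\tilde v(x)|^{p'(x)}}{(p')^{-}}.
\]
Integrating over $\Omega$ and using the two modular bounds just obtained yields
\[
\int_\Omega |\tilde u(x)\tilde v(x)|\,dx \leq \frac{1}{p^{-}} + \frac{1}{(p')^{-}},
\]
and multiplying through by $|u|_{p(x)}|v|_{p'(x)}$ gives the first asserted inequality. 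The second inequality, with constant $2$, follows since $\frac{1}{p^{-}} \leq 1$ and $\frac{1}{(p')^{-}} \leq 1$ (both exponents exceed $1$). The only delicate point here is justifying the modular bound $\rho_{p(x)}(w) \leq 1$ for $|w|_{p(x)} \leq 1$; this is a standard property of Luxemburg norms obtained by a monotone/continuity argument on $\mu \mapsto \rho_{p(x)}(w/\mu)$ and may simply be cited from \cite{D3,F3}, along with reflexivity and separability already invoked above.

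\medskip

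For the three-function version with $\frac{1}{h_1(x)} + \frac{1}{h_2(x)} + \frac{1}{h_3(x)} = 1$, I would set $\frac{1}{g(x)} := \frac{1}{h_1(x)} + \frac{1}{h_2(x)} = 1 - \frac{1}{h_3(x)}$, so that $g$ and $h_3$ are conjugate variable exponents; note $g$ is Lipschitz since the $h_i$ are and are bounded away from $1$. Applying the two-function inequality to $uv \in L^{g(x)}(\Omega)$ and $w \in L^{h_3(x)}(\Omega)$ gives $\int_\Omega |uvw|\,dx \leq \big(\frac{1}{g^{-}} + \frac{1}{h_3^{-}}\big)\,|uv|_{g(x)}\,|w|_{h_3(x)}$. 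Then I would apply the two-function inequality a second time, to $u \in L^{h_1(x)/g(x)\cdot g(x)}$... more precisely to the pair with exponents $h_1(x)/g(x)$ and $h_2(x)/g(x)$ which are conjugate (since $\frac{g(x)}{h_1(x)} + \frac{g(x)}{h_2(x)} = 1$), applied to $|u|^{g(x)}$ and $|v|^{g(x)}$, to bound $\big|\,|uv|^{g(x)}\,\big|_{1} = \int_\Omega |uv|^{g(x)}\,dx$ and hence control $|uv|_{g(x)}$ by $|u|_{h_1(x)}|v|_{h_2(x)}$ up to constants. Collecting the constants and estimating each $\frac{1}{(\cdot)^{-}} \leq 1$ from above, one arrives at the stated bound with constant $\frac{1}{h_1^{-}} + \frac{1}{h_2^{-}} + \frac{1}{h_3^{-}}$. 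I expect the main bookkeeping obstacle to be tracking the constants through the nested applications and verifying the homogeneity identities relating $|uv|_{g(x)}$ to the norms of $|u|^{g(x)}$ and $|v|^{g(x)}$; conceptually, however, everything reduces to pointwise Young's inequality plus the elementary modular–norm relations, so no genuinely hard analytic step is involved.
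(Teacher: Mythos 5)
The paper never proves this proposition: it is quoted as a known preliminary from \cite{D3,F3}, so there is no in-paper argument to compare against and your proposal has to stand on its own. For the two-function inequality it does: the reduction to $|u|_{p(x)}=|v|_{p'(x)}=1$, the modular bounds $\rho_p(\tilde u)\le 1$, $\rho_{p'}(\tilde v)\le 1$ (Proposition \ref{prop2}), and pointwise Young's inequality together with $1/p(x)\le 1/p^-$, $1/p'(x)\le 1/(p')^-$ is exactly the standard Kov\'a\v{c}ik--R\'akosn\'ik/Fan--Zhao argument, and it yields the stated constant, the bound by $2$ following since both infima exceed $1$.

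The three-function part is where your route has a genuine soft spot. Iterating the two-function inequality through $1/g(x)=1/h_1(x)+1/h_2(x)$ gives a bound of the form $\bigl(\frac{1}{g^-}+\frac{1}{h_3^-}\bigr)C'\,|u|_{h_1(x)}|v|_{h_2(x)}|w|_{h_3(x)}$, where $C'$ is the constant in the product estimate $|uv|_{g(x)}\le C'\,|u|_{h_1(x)}|v|_{h_2(x)}$. In the constant-exponent case $C'=1$ by homogeneity, but for variable exponents it is not: with $|u|_{h_1(x)}=|v|_{h_2(x)}=1$, the pointwise Young step with the conjugate pair $h_1(x)/g(x)$, $h_2(x)/g(x)$ only gives $\rho_g(uv)\le \sup_x\tfrac{g(x)}{h_1(x)}+\sup_x\tfrac{g(x)}{h_2(x)}$, which in general exceeds $1$ (the two suprema need not be attained at the same points), and the relations of Proposition \ref{prop3} between $\bigl|\,|f|^{g(x)}\bigr|_{q(x)}$ and $|f|_{g(x)q(x)}$ hold only up to $g^-$/$g^+$ powers; so ``collecting the constants'' does not visibly produce the asserted constant $\frac{1}{h_1^-}+\frac{1}{h_2^-}+\frac{1}{h_3^-}$, only some larger multiple. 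The clean fix is to bypass the iteration entirely and repeat your normalization argument with the three-term Young inequality $abc\le \frac{a^{h_1(x)}}{h_1(x)}+\frac{b^{h_2(x)}}{h_2(x)}+\frac{c^{h_3(x)}}{h_3(x)}$, bound each $1/h_i(x)$ by $1/h_i^-$, and integrate using the three modular bounds; this gives exactly the stated constant in one line. A minor remaining point: the proposition also asserts the identification of the conjugate space of $L^{p(x)}(\Omega)$ with $L^{p'(x)}(\Omega)$, which your proposal, like the paper, leaves to the cited references.
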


\begin{proposition}[see \cite{D3,F3}] \label{prop2}
	Denote $\rho_p (u)=\int_{\Omega}| u| ^{p(x)}dx$, for all $u\in L^{p(x)}(\Omega)$. We have
	\[
	\min \{ | u| _{p(x)}^{p^{-}},| u| _{p(x)}^{p^{+}}\}
	\leq \rho_p (u)\leq \max \{ | u| _{p(x)}^{p^{-}},| u| _{p(x)
	}^{p^{+}}\}
	\]
	
	and the following implications are true
	\begin{itemize}
		\item[(i)]  $| u| _{p(x)}<1$ (resp. $=1, >1$) $\Leftrightarrow \rho_p (u)<1$
		(resp. $=1,>1$);
		
		\item[(ii)] $| u| _{p(x)}>1 \Rightarrow | u| _{p(x)}^{p^{-}}\leq \rho_p (u)
		\leq | u| _{p(x)}^{p^{+}}$;
		
		\item[(iii)] $| u| _{p(x) }<1\Rightarrow | u| _{p(x)}^{p^{+}}\leq \rho_p (u)
		\leq | u| _{p(x)}^{p^{-}}$.
		
	\end{itemize}
\end{proposition}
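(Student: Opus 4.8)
The plan is to derive the proposition directly from the definition of the Luxemburg norm, reducing everything to the elementary relationship between $|u|_{p(x)}$ and the modular $\rho_p$. The only analytic inputs are that $\mu\mapsto\rho_p(u/\mu)=\int_\Omega|u(x)/\mu|^{p(x)}\,dx$ is non-increasing on $(0,\infty)$ and the monotone convergence theorem; here one uses $p^+<\infty$, which holds because $p\in C_+(\overline\Omega)$ and $\overline\Omega$ is compact. The case $u=0$ is trivial, since then $|u|_{p(x)}=\rho_p(u)=0$ and $p^\pm\ge1>0$, so I assume $u\neq0$ and set $\mu_0:=|u|_{p(x)}\in(0,\infty)$.

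First I would establish a weak attainment of the infimum: $\rho_p(u/\mu_0)\le1$, and moreover $\rho_p(u/\nu)>1$ for every $0<\nu<\mu_0$. For the first assertion, observe that for any $\mu>\mu_0$ the definition of the infimum yields $\mu'\in(\mu_0,\mu]$ with $\rho_p(u/\mu')\le1$, hence $\rho_p(u/\mu)\le\rho_p(u/\mu')\le1$ by monotonicity; taking $\mu_n\downarrow\mu_0$ and noting $|u/\mu_n|^{p(x)}\uparrow|u/\mu_0|^{p(x)}$ pointwise, monotone convergence gives $\rho_p(u/\mu_0)\le1$. The second assertion is immediate, since $\rho_p(u/\nu)\le1$ would force $\mu_0\le\nu$. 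Throughout I shall use that $\mu^{p^-}\le\mu^{p(x)}\le\mu^{p^+}$ when $\mu\ge1$ and that these inequalities reverse when $0<\mu\le1$.

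Now I would split according to the size of $\mu_0$. If $\mu_0>1$, then $\rho_p(u)=\int_\Omega\mu_0^{p(x)}|u/\mu_0|^{p(x)}\,dx\le\mu_0^{p^+}\rho_p(u/\mu_0)\le\mu_0^{p^+}$, which is the upper estimate of (ii); for the lower one, fix $1<\nu<\mu_0$, so $1<\rho_p(u/\nu)\le\nu^{-p^-}\rho_p(u)$, whence $\rho_p(u)>\nu^{p^-}$, and letting $\nu\uparrow\mu_0$ gives $\rho_p(u)\ge\mu_0^{p^-}$. The case $\mu_0<1$ is symmetric and produces (iii): for $\mu\in(\mu_0,1)$ one has $\rho_p(u)=\int_\Omega\mu^{p(x)}|u/\mu|^{p(x)}\,dx\le\mu^{p^-}\rho_p(u/\mu)\le\mu^{p^-}$, so $\rho_p(u)\le\mu_0^{p^-}$ upon letting $\mu\downarrow\mu_0$, while for $0<\nu<\mu_0$ one has $1<\rho_p(u/\nu)\le\nu^{-p^+}\rho_p(u)$, so $\rho_p(u)\ge\mu_0^{p^+}$ upon letting $\nu\uparrow\mu_0$. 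Finally, if $\mu_0=1$, then $\mu_n\downarrow1$ and monotone convergence give $\rho_p(u)\le1$, and if $\rho_p(u)<1$ held then $\mu:=\rho_p(u)^{1/p^+}<1$ would satisfy $\rho_p(u/\mu)\le\mu^{-p^+}\rho_p(u)=1$, contradicting $\mu_0=1$; hence $\rho_p(u)=1$.

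The two-sided modular estimate then follows by combining these cases with $\mu_0^{p^-}\le\mu_0^{p^+}$ when $\mu_0\ge1$ and $\mu_0^{p^+}\le\mu_0^{p^-}$ when $\mu_0\le1$, and the equivalences in (i) are read off from the same computations: $\rho_p(u)<1$ implies $\mu_0\le\rho_p(u)^{1/p^+}<1$ by the argument used when $\mu_0=1$, $\mu_0<1$ implies $\rho_p(u)\le\mu_0^{p^-}<1$ by (iii), and the cases ``$=1$'' and ``$>1$'' follow by trichotomy. The only genuinely delicate point is the attainment statement $\rho_p(u/\mu_0)\le1$ of the second paragraph --- this is precisely where finiteness of $p^+$ and the monotone convergence theorem are needed --- after which everything is bookkeeping with the inequalities relating $\mu^{p(x)}$ to $\mu^{p^\pm}$.
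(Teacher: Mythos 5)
Your proof is correct: the paper itself offers no proof of this proposition (it is quoted from the cited references \cite{D3,F3}), and your argument from the definition of the Luxemburg norm — monotonicity of $\mu\mapsto\rho_p(u/\mu)$, the attainment $\rho_p(u/|u|_{p(x)})\le 1$ via monotone convergence, and the elementary comparison of $\mu^{p(x)}$ with $\mu^{p^{\pm}}$ in the cases $|u|_{p(x)}>1$, $<1$, $=1$ — is exactly the standard one found in those references. The only cosmetic slips are the interval $(\mu_0,\mu]$ (the admissible point supplied by the infimum lies in $[\mu_0,\mu)$, which serves equally well) and the claim that finiteness of $p^{+}$ is what makes the monotone-convergence step work (it is not needed there, only for the $\mu^{p^{\pm}}$ comparisons); neither affects the argument.
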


\begin{proposition}[see \cite{E1}] \label{prop3}
	Let $p(x)$ and $q(x)$ be measurable functions such
	that $p\in L^{\infty }(\Omega)$ and
	$1\leq p(x), q(x)\leq \infty $ almost everywhere in
	$\Omega$. If $u\in L^{q(x)}(\Omega)$, $u\neq 0$.
	Then, we have
	\begin{gather*}
		| u| _{p(x)q(x)}\leq 1\Rightarrow |u| _{p(x)q(x)}^{p^{-}}
		\leq \big|| u| ^{p(x)}\big| _{q(x)}\leq | u| _{p(x)q(x)}^{p^{+}},
		\\
		| u| _{p(x)q(x)}\geq 1\Rightarrow |u| _{p(x)q(x)}^{p^{+}}
		\leq \big| | u| ^{p(x)}\big| _{q(x)}\leq | u| _{p(x)q(x)}^{p^{-}}.
	\end{gather*}
	In particular, if $p(x)=p$ is constant, then
	\[
	| | u| ^{p}| _{q(x)}
	=|u| _{pq(x)}^{p}.
	\]
\end{proposition}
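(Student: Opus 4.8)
The plan is to work entirely at the level of the modular and to reduce the statement to the elementary identity $\rho_{p(x)q(x)}(u)=\int_{\Omega}|u|^{p(x)q(x)}\,dx=\int_{\Omega}\big(|u|^{p(x)}\big)^{q(x)}\,dx=\rho_{q(x)}\big(|u|^{p(x)}\big)$, which means that the unit ball of $L^{p(x)q(x)}(\Omega)$ measured for $u$ coincides with the unit ball of $L^{q(x)}(\Omega)$ measured for $|u|^{p(x)}$. First I would record the usual unit-ball property $|v|_{r(x)}\le 1\Leftrightarrow\rho_{r(x)}(v)\le 1$ (cf. Proposition \ref{prop2}(i)), which holds for any measurable exponent $r\ge 1$ by the definition of the Luxemburg norm together with monotone convergence; applying it with $r=p(x)q(x)$ and with $r=q(x)$ and invoking the identity above shows that $|u|_{p(x)q(x)}$ and $\big||u|^{p(x)}\big|_{q(x)}$ always lie on the same side of $1$. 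Since $u\ne 0$ both are strictly positive, so we may divide by them; set $\lambda:=|u|_{p(x)q(x)}$, $\nu:=\big||u|^{p(x)}\big|_{q(x)}$, and note that $\rho_{p(x)q(x)}(u/\lambda)\le 1$ and $\rho_{q(x)}\big(|u|^{p(x)}/\nu\big)\le 1$.

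The heart of the proof is then a pair of one-line pointwise comparisons exploiting $p^{-}\le p(x)\le p^{+}$ and the monotonicity of $t\mapsto t^{\alpha}$ (increasing for $t\ge 1$, decreasing for $t\le 1$). For the upper estimate, with $\star$ equal to $p^{+}$ when $\lambda\ge 1$ and to $p^{-}$ when $\lambda\le 1$, one has $\lambda^{\star q(x)}\ge\lambda^{p(x)q(x)}$ a.e., hence $\rho_{q(x)}\big(|u|^{p(x)}/\lambda^{\star}\big)\le\rho_{p(x)q(x)}(u/\lambda)\le 1$ and therefore $\nu\le\lambda^{\star}$. For the lower estimate, with $\star'$ the complementary value, the fact that $\nu$ lies on the same side of $1$ as $\lambda$ gives $\nu^{p(x)q(x)/\star'}\ge\nu^{q(x)}$ a.e., hence $\rho_{p(x)q(x)}\big(u/\nu^{1/\star'}\big)\le\rho_{q(x)}\big(|u|^{p(x)}/\nu\big)\le 1$ and therefore $\lambda\le\nu^{1/\star'}$, i.e. $\lambda^{\star'}\le\nu$. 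Combining the two, $|u|_{p(x)q(x)}^{p^{-}}$ and $|u|_{p(x)q(x)}^{p^{+}}$ bracket $\big||u|^{p(x)}\big|_{q(x)}$ in the order dictated by the position of $\lambda$ relative to $1$, which is the asserted chain of inequalities in the two cases.

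Two points remain. First, if $q(x)=+\infty$ on a set $E$ of positive measure (so that $p(x)q(x)=+\infty$ on $E$ as well), the modular identity of the first paragraph must be replaced on $E$ by the direct comparison between $\big\||u|^{p(x)}\big\|_{L^{\infty}(E)}$ and $\|u\|_{L^{\infty}(E)}$, which obeys the same $t\mapsto t^{\alpha}$ monotonicity and so leaves both cases unchanged. Second, when $p(x)\equiv p$ is constant, $p^{-}=p^{+}=p$, the two bracketing quantities collapse, and one recovers the displayed identity $\big||u|^{p}\big|_{q(x)}=|u|_{pq(x)}^{p}$. I expect the only genuinely delicate point to be the bookkeeping of which of $p^{-},p^{+}$ serves as the lower and which as the upper bound in each regime $\lambda\le 1$, $\lambda\ge 1$, and the verification that the modular computations still go through when the composite exponent $p(x)q(x)$ is unbounded; both are handled by using the $\le 1$ form of the unit-ball property (rather than the $=1$ form) together with monotone convergence, so that no continuity of the modular is required.
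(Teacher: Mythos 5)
Your argument is sound and is, in substance, the standard proof of this lemma; note that the paper itself gives no proof at all (Proposition \ref{prop3} is quoted from \cite{E1} as a preliminary), so there is no in-paper argument to compare against. Your route --- the modular identity $\rho_{p(x)q(x)}(u)=\rho_{q(x)}\big(|u|^{p(x)}\big)$, the unit-ball property of the Luxemburg norm used only in its ``$\rho(u/|u|)\le 1$'' form, and the two pointwise exponent comparisons with $\lambda=|u|_{p(x)q(x)}$ and $\nu=\big||u|^{p(x)}\big|_{q(x)}$ --- is exactly the classical one, and your separate treatment of the set where $q(x)=\infty$ and of the constant-$p$ case is fine.

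One point you must make explicit, however: what your two estimates actually deliver is $|u|_{p(x)q(x)}\le 1\Rightarrow |u|_{p(x)q(x)}^{p^{+}}\le \big||u|^{p(x)}\big|_{q(x)}\le |u|_{p(x)q(x)}^{p^{-}}$ and $|u|_{p(x)q(x)}\ge 1\Rightarrow |u|_{p(x)q(x)}^{p^{-}}\le \big||u|^{p(x)}\big|_{q(x)}\le |u|_{p(x)q(x)}^{p^{+}}$ (indeed, for $\lambda\le 1$ your upper bound uses $\star=p^{-}$ and your lower bound uses $\star'=p^{+}$). This is the opposite orientation to the chains displayed in the proposition as printed, and the printed chains cannot be right: for $\lambda<1$ and $p^{-}<p^{+}$ they would force $\lambda^{p^{-}}\le\lambda^{p^{+}}$, which is false. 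So the display has $p^{-}$ and $p^{+}$ interchanged (a misprint relative to \cite{E1}), and your closing claim that your bracketing ``is the asserted chain'' should instead say that you have proved the corrected version. Two minor further remarks: the natural hypothesis is $u\in L^{p(x)q(x)}(\Omega)$ rather than $u\in L^{q(x)}(\Omega)$ (another misprint), and at the borderline $\lambda=1$ the choice of $\star'$ should be dictated by the position of $\nu$ relative to $1$ (which your first paragraph provides via the modular identity), not by that of $\lambda$; since all inequalities are non-strict and the two cases overlap there, this costs nothing, but it deserves a sentence.
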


\begin{proposition}[see \cite{D3,F3}] \label{prop4}
	If $u,u_{n}\in L^{p(x)}(\Omega)$,
	$n=1,2,\dots$, then the following statements are equivalent to each other:
	\begin{itemize}
		
		\item[(1)]  $\lim_{n\to \infty } | u_{n}-u|_{p(x)}=0$,
		
		\item[(2)] $\lim_{n\to \infty } \rho_p (u_{n}-u)=0$,
		
		\item[(3)] $u_{n}\to u$ in measure in $\Omega$ and
		$\lim_{n\to \infty } \rho_p (u_{n})=\rho_p (u)$.
		
	\end{itemize}
\end{proposition}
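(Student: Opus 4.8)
The statement is the classical equivalence of norm convergence, modular convergence, and ``convergence in measure plus modular convergence'' in $L^{p(x)}(\Omega)$ (cf.\ \cite{D3,F3}); here is the line of argument I would follow. I would establish $(1)\Leftrightarrow(2)$, $(2)\Rightarrow(3)$ and $(3)\Rightarrow(2)$, which together give the equivalence of all three (and, via $(1)\Leftrightarrow(2)$, also $(3)\Rightarrow(1)$).

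\textbf{Step 1 ($(1)\Leftrightarrow(2)$).} Set $v_n:=u_n-u\in L^{p(x)}(\Omega)$ and apply Proposition \ref{prop2} to $v_n$. If $|v_n|_{p(x)}\to 0$, then $|v_n|_{p(x)}<1$ for $n$ large, so $\rho_p(v_n)\le|v_n|_{p(x)}^{p^-}\to0$. Conversely, if $\rho_p(v_n)\to0$, then $\rho_p(v_n)<1$ for $n$ large, hence by item (i) of Proposition \ref{prop2} also $|v_n|_{p(x)}<1$, and then $|v_n|_{p(x)}^{p^+}\le\rho_p(v_n)\to0$, so $|v_n|_{p(x)}\to0$. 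No further tools are needed.

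\textbf{Step 2 ($(2)\Rightarrow(3)$).} Convergence in measure follows from a Chebyshev-type estimate: for each $\varepsilon>0$, on $\{|u_n-u|>\varepsilon\}$ one has $|u_n-u|^{p(x)}\ge\min\{\varepsilon^{p^-},\varepsilon^{p^+}\}$, whence $\operatorname{mes}\{|u_n-u|>\varepsilon\}\le\rho_p(u_n-u)/\min\{\varepsilon^{p^-},\varepsilon^{p^+}\}\to0$. For $\rho_p(u_n)\to\rho_p(u)$, I would use the elementary inequality: for every $\delta>0$ there is $C_\delta>0$, depending only on $\delta$ and on $p^-,p^+$, with $|a+b|^{r}\le(1+\delta)|a|^{r}+C_\delta|b|^{r}$ for all $a,b\in\mathbb{R}$ and all $r\in[p^-,p^+]$. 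Taking $r=p(x)$, first with $a=u$, $b=u_n-u$, then with $a=u_n$, $b=u-u_n$, and integrating, gives $\rho_p(u_n)\le(1+\delta)\rho_p(u)+C_\delta\rho_p(u_n-u)$ and $\rho_p(u)\le(1+\delta)\rho_p(u_n)+C_\delta\rho_p(u_n-u)$; letting $n\to\infty$ (so $\rho_p(u_n-u)\to0$ by hypothesis) and then $\delta\to0$ yields $\limsup_n\rho_p(u_n)\le\rho_p(u)\le\liminf_n\rho_p(u_n)$, i.e.\ $\rho_p(u_n)\to\rho_p(u)$.

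\textbf{Step 3 ($(3)\Rightarrow(2)$).} This is the delicate direction, and I would argue via the subsequence principle together with a Fatou/Brezis--Lieb cancellation. It suffices to show that every subsequence of $(u_n)$ admits a further subsequence along which $\rho_p(u_n-u)\to0$. Fix a subsequence; since $u_n\to u$ in measure, extract a further subsequence, still written $(u_n)$, with $u_n\to u$ a.e.\ in $\Omega$. Put $g_n:=2^{p^+-1}\big(|u_n|^{p(x)}+|u|^{p(x)}\big)-|u_n-u|^{p(x)}$, which is nonnegative since $|u_n-u|^{p(x)}\le2^{p(x)-1}\big(|u_n|^{p(x)}+|u|^{p(x)}\big)\le2^{p^+-1}\big(|u_n|^{p(x)}+|u|^{p(x)}\big)$, and note $g_n\to2^{p^+}|u|^{p(x)}$ a.e. By Fatou's lemma and the hypothesis $\rho_p(u_n)\to\rho_p(u)$ (which persists along the subsequence),
\[
2^{p^+}\rho_p(u)\le\liminf_{n\to\infty}\int_\Omega g_n\,dx=2^{p^+}\rho_p(u)-\limsup_{n\to\infty}\rho_p(u_n-u),
\]
using $\rho_p(u)<\infty$. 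Hence $\limsup_n\rho_p(u_n-u)\le0$, i.e.\ $\rho_p(u_n-u)\to0$ along this further subsequence; the subsequence principle then gives $\rho_p(u_n-u)\to0$ for the whole sequence, and by Step 1 also $|u_n-u|_{p(x)}\to0$. The main obstacle is precisely this step: the crude bound $\rho_p(u_n-u)\le2^{p^+-1}\big(\rho_p(u_n)+\rho_p(u)\big)$ only gives boundedness, so one genuinely needs the pointwise a.e.\ convergence extracted from convergence in measure, the Fatou cancellation, and the passage back to the full sequence via subsequences; Steps 1 and 2 are then routine consequences of Proposition \ref{prop2} and standard convexity inequalities.
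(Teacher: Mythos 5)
Your argument is correct. Note, however, that the paper itself gives no proof of this proposition: it is quoted as a known property of variable exponent Lebesgue spaces with a citation to \cite{D3,F3}, so there is nothing internal to compare against; the comparison is rather with the classical references. There, the standard proofs of the delicate implication $(3)\Rightarrow(1)$ proceed via absolute continuity of the modular and a Vitali-type (uniform integrability) convergence argument, whereas you use the Fatou/Brezis--Lieb cancellation with the majorant $2^{p^+-1}\bigl(|u_n|^{p(x)}+|u|^{p(x)}\bigr)$, combined with the subsequence principle; both routes are legitimate, and yours has the merit of being short and self-contained, using only $1<p^-\le p^+<\infty$ (which guarantees $\rho_p(u)<\infty$ and the uniform constants in your Step 2 inequality), the Riesz extraction of an a.e.\ convergent subsequence from convergence in measure, and Proposition \ref{prop2} for the norm--modular transfer in Step 1. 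The only points worth making explicit if this were written out in full are the finiteness of $\rho_p(u)$ before subtracting it in the Fatou step (immediate from Proposition \ref{prop2}) and the fact that hypothesis $(3)$ is inherited by every subsequence, which is what makes the subsequence principle applicable; with those remarks your proof is complete.
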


For all $x\in \Omega $, denote by
\[
p^{\ast }(x)=\begin{cases}
	\frac{Np(x)}{N-p(x)} &\text{for } p(x)<N \\
	+\infty &\text{for }p(x)\geq N
\end{cases}
\]
the critcal Sobolev exponent of $p(x)$.

\begin{proposition}[see \cite{D3,E1}] \label{prop6}
	When $\inf_{x\in \Omega} (p^{\ast }(x)-q(x))>0$, 	we write  $q(x) \ll p^{\ast }(x)$. Then, we have\\
	(1) If $q\in L_{+}^{\infty }(\Omega)$ and
	$p(x)\leq q(x) \ll p^{\ast }(x)$, for all $x\in \mathbb{R}^N$, then the embedding
	$ W^{1,p(x)}(\Omega)\hookrightarrow L^{q(x)}(\Omega)$
	is continuous but not compact.
	
	(2) If $p$ is continuous on $\overline{\Omega }$ and $q$ is a measurable
	function on $\Omega $, with $p(x)< q(x) < p^{\ast }(x)$  for all
	$x\in\Omega $, then the embedding
	$W^{1,p(x)}(\Omega )\hookrightarrow L^{q(x)}(\Omega )$
	is compact.
\end{proposition}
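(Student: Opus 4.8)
The plan is to split the statement into its two genuinely different halves. For the continuity in (1) I would first isolate the one deep ingredient, namely the sharp Sobolev inequality for variable exponents asserting a continuous embedding $W^{1,p(x)}(\Omega)\hookrightarrow L^{p^{\ast}(x)}(\Omega)$, and then deduce the embedding into $L^{q(x)}(\Omega)$ for any $q$ with $q(x)\leq p^{\ast}(x)$ from the elementary inclusion of variable Lebesgue spaces on a set of finite measure recorded in Remark~1. For the compactness in (2) I would run a Vitali-type equi-integrability argument that upgrades the classical Rellich--Kondrachov theorem, using only the boundedness (from (1)) of the sequence in the critical space together with the variable-exponent H\"older inequality.

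\medskip
\noindent\textbf{Part (1).} To prove $W^{1,p(x)}(\Omega)\hookrightarrow L^{p^{\ast}(x)}(\Omega)$, since $\partial\Omega$ is smooth there is a bounded linear extension $E\colon W^{1,p(x)}(\Omega)\to W^{1,p(x)}(\mathbb{R}^{N})$, so it suffices to establish the inequality on $\mathbb{R}^{N}$ (localised to a large ball). There one combines the pointwise potential bound $|u(x)|\leq c\,I_{1}(|\nabla u|)(x)$ ($I_{1}$ the Riesz potential of order one), a Hedberg-type estimate, and the boundedness of the Hardy--Littlewood maximal operator on $L^{r(x)}(\mathbb{R}^{N})$ for $r^{-}>1$; the latter is exactly where the Lipschitz, hence log-H\"older, regularity of $p$ is used. (If $p(x)\geq N$ somewhere one replaces the critical estimate by the classical Morrey--Trudinger one; in the range relevant to this paper $p(x)<N$, so $p^{\ast}(x)$ is finite and this does not arise.) Granting this, boundedness of $\Omega$ and $q(x)\leq p^{\ast}(x)$ give $L^{p^{\ast}(x)}(\Omega)\hookrightarrow L^{q(x)}(\Omega)$ by Remark~1, and composition yields the continuous embedding $W^{1,p(x)}(\Omega)\hookrightarrow L^{q(x)}(\Omega)$. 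For the failure of compactness I would exhibit a concentrating family at a point $x_{0}$ where $q(x_{0})=p^{\ast}(x_{0})$: fix $\psi\in C^{\infty}_{c}(B_{1}(0))\setminus\{0\}$ and set $u_{\varepsilon}(x)=\varepsilon^{(p(x_{0})-N)/p(x_{0})}\psi\big((x-x_{0})/\varepsilon\big)$ for small $\varepsilon>0$; since $p$ is Lipschitz the exponent varies only by $O(\varepsilon)$ on $B_{\varepsilon}(x_{0})$, so a change of variables together with Proposition~\ref{prop2} shows $\|u_{\varepsilon}\|_{1,p(x)}$ stays bounded while $|u_{\varepsilon}|_{q(x)}$ stays bounded away from $0$, whereas $u_{\varepsilon}\rightharpoonup 0$ in $W^{1,p(x)}(\Omega)$ and $u_{\varepsilon}\to 0$ a.e.; hence no subsequence converges strongly in $L^{q(x)}(\Omega)$.

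\medskip
\noindent\textbf{Part (2).} Here the argument uses (this is automatic when $q$ is continuous with $q<p^{\ast}$ on $\overline{\Omega}$, or whenever $q\ll p^{\ast}$) that $\delta:=\inf_{\overline{\Omega}}\big(p^{\ast}(x)-q(x)\big)>0$. By the usual reduction it suffices to show that $u_{n}\rightharpoonup 0$ in $W^{1,p(x)}(\Omega)$ implies $u_{n}\to 0$ in $L^{q(x)}(\Omega)$. By Part (1) the sequence is bounded in $L^{p^{\ast}(x)}(\Omega)$; since $\Omega$ is bounded, $W^{1,p(x)}(\Omega)\hookrightarrow W^{1,p^{-}}(\Omega)$, so the classical Rellich--Kondrachov theorem gives $u_{n}\to 0$ strongly in $L^{p^{-}}(\Omega)$ and hence, along a subsequence, $u_{n}\to 0$ a.e., so $|u_{n}|^{q(x)}\to 0$ a.e. To pass to the limit in $\int_{\Omega}|u_{n}|^{q(x)}\,dx$ I would prove uniform absolute continuity of these integrals: for measurable $E\subseteq\Omega$, the H\"older inequality (Proposition~\ref{prop1}) with the conjugate pair $h_{1}(x)=p^{\ast}(x)/q(x)$, $h_{2}(x)=p^{\ast}(x)/(p^{\ast}(x)-q(x))$ gives $\int_{E}|u_{n}|^{q(x)}\,dx\leq 2\,\bigl\||u_{n}|^{q(x)}\bigr\|_{L^{h_{1}(x)}(E)}\,\|1\|_{L^{h_{2}(x)}(E)}$, where the first factor is bounded by a fixed power of the (bounded) norm $|u_{n}|_{p^{\ast}(x)}$ via Propositions~\ref{prop2}--\ref{prop3}, and $\|1\|_{L^{h_{2}(x)}(E)}\leq|E|^{1/h_{2}^{+}}\to 0$ as $|E|\to 0$ because $h_{2}^{+}<\infty$ (this is precisely where $\delta>0$ enters). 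Thus $\{|u_{n}|^{q(x)}\}_{n}$ has uniformly absolutely continuous integrals, Vitali's theorem yields $\rho_{q}(u_{n})=\int_{\Omega}|u_{n}|^{q(x)}\,dx\to 0$, and by Propositions~\ref{prop2} and \ref{prop4} this is equivalent to $|u_{n}|_{q(x)}\to 0$; since the limit is independent of the extracted subsequence, the full sequence converges and the embedding is compact.

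\medskip
\noindent\textbf{Main obstacle.} The only genuinely hard point is the critical Sobolev inequality in Part (1), whose proof hinges on mapping properties of the Hardy--Littlewood maximal operator on $L^{r(x)}$ and therefore on the regularity of the exponent. Everything else is soft: composition with a Lebesgue-space inclusion for the continuity, an explicit scaling family for the non-compactness, and a standard truncation/Vitali scheme for the compactness. Since the proposition is quoted here from \cite{D3,E1}, in the paper itself Part (1)'s inequality would be cited rather than reproved.
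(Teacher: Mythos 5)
The paper offers no proof of Proposition \ref{prop6}: it is quoted verbatim from \cite{D3,E1} as a preliminary, so the only meaningful comparison is with the standard arguments in those references. Your overall decomposition matches them: for continuity, the critical inequality $W^{1,p(x)}(\Omega)\hookrightarrow L^{p^{\ast}(x)}(\Omega)$ (via Riesz potential and the maximal operator, for which Lipschitz, hence log-H\"older, regularity of $p$ suffices) composed with the Lebesgue-scale inclusion of Remark~1; for compactness, the reduction to $u_n\rightharpoonup 0$, Rellich--Kondrachov through $W^{1,p^-}(\Omega)$, a.e.\ convergence, and a Vitali/uniform-integrability step driven by the H\"older splitting with $h_1=p^\ast/q$, $h_2=p^\ast/(p^\ast-q)$. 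Part (2) is essentially correct as sketched, and your caveat is well taken: the literal hypothesis ``$q$ measurable with $q(x)<p^{\ast}(x)$ pointwise'' is weaker than what your argument (and the cited proofs) actually use, namely $\inf_{\overline\Omega}(p^{\ast}-q)>0$, which is what makes $h_2^+<\infty$ and hence $\|1\|_{L^{h_2(x)}(E)}\to 0$ as $|E|\to 0$.

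The genuine gap is your non-compactness argument in Part (1). The hypothesis of item (1) is $q\ll p^{\ast}$, i.e.\ $\inf(p^{\ast}(x)-q(x))>0$, so there is no point $x_0$ with $q(x_0)=p^{\ast}(x_0)$, and your concentrating family cannot work: a direct scaling computation gives $\int_{B_\varepsilon(x_0)}|u_\varepsilon|^{q(x)}\,dx\approx \varepsilon^{\,N-q(x_0)(N-p(x_0))/p(x_0)}\to 0$ whenever $q(x_0)<p^{\ast}(x_0)$, so $u_\varepsilon\to 0$ strongly in $L^{q(x)}(\Omega)$ and no obstruction to compactness is produced. Indeed, on a bounded domain a uniform subcritical gap makes the embedding compact --- that is precisely your own Part (2) argument --- so the ``continuous but not compact'' assertion can only refer to the unbounded setting (note the ``for all $x\in\mathbb{R}^N$'' in the statement; the references state this phenomenon for $\Omega=\mathbb{R}^N$ or domains of infinite measure), where the correct counterexample is a translating bump $u_k(x)=\psi(x-ke_1)$, bounded in $W^{1,p(x)}$, weakly null, but with $|u_k|_{q(x)}$ bounded below. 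You should either flag this mismatch in the proposition's wording or replace the concentration family by translations; as written, the non-compactness step fails under the stated hypothesis, while a concentration family would instead be the right tool only in the critical case $q(x_0)=p^{\ast}(x_0)$, which is exactly the situation the concentration-compactness machinery of Theorems \ref{ccp} and \ref{ccpp} is designed to handle later in the paper.
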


Let
\begin{equation*}
q\in 	C_{+}(\partial\Omega):= \{   h \in C(\partial\Omega) , h(x)> 1 \quad \text{for all } x \in \partial\Omega \} .
\end{equation*}

The Lebesgue space $L^{q(x)}(\partial\Omega)$ is dened as
\begin{gather*}
	L^{q(x)}(\partial\Omega)= \{ u : \quad \text{ u is a measurable real-valued function and } \int_{\Omega}|u(x)|^{q(x)}dS <\infty \},
\end{gather*}

and the corresponding (Luxemburg) norm is given by

\begin{equation*}
	|u|_{	L^{q(x)}(\partial\Omega)}:=|u|_{q(x),\partial \Omega} = \inf \left\{ \mu >0 ; \int_{\Omega }\left|\frac{u(x)}{\mu}\right|^{q(x)} dS \leq 1 \right \}.
\end{equation*}

For all $x\in \partial\Omega $, denote by
\[
p^{\ast \ast  }(x)=\begin{cases}
	\frac{(N-1)p(x)}{N-p(x)} &\text{for } p(x)<N \\
	+\infty &\text{for }p(x)\geq N
\end{cases}
\]
the critcal Sobolev exponent of $p(x)$.

\begin{proposition}[see \cite{D3,E1}] \label{prop7}
	Suppose that $\Omega$ is a bounded smooth domain in $\mathbb{R}^N$, $q\in	C_{+}(\partial\Omega)$ and $p\in 	C_{+}(\overline{\Omega })$ with $N>p^+$. Then, if $q(x)\leq p^{\ast \ast }(x)$( $q(x)<p^{\ast \ast }(x)$), for all $x\in \overline{\Omega } $, the embedding
	$$W^{1,p(x)}(\Omega )\hookrightarrow L^{q(x)}(\partial \Omega ),$$
	is continuous (compact).
	
	\end{proposition}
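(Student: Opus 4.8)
The plan is to establish first the continuous embedding in the \emph{critical} case $q(x)=p^{\ast\ast}(x)$, and then to deduce from it both the general continuous statement ($q\le p^{\ast\ast}$) and the compact one ($q<p^{\ast\ast}$). Since $p$ is Lipschitz on $\overline\Omega$ it is log-Hölder continuous, so smooth functions are dense in $W^{1,p(x)}(\Omega)$ and it suffices to prove the trace inequality for $u\in C^{\infty}(\overline\Omega)$. I would fix a finite open cover $U_0\Subset\Omega$, $U_1,\dots,U_m$ of $\overline\Omega$ with $\partial\Omega\subset U_1\cup\dots\cup U_m$, chosen so that on each $U_j$ ($j\ge 1$) a rigid motion followed by a $C^1$ change of variables sends $U_j\cap\Omega$ into a half-space and $U_j\cap\partial\Omega$ into a piece of $\mathbb{R}^{N-1}\times\{0\}$, and take a subordinate partition of unity $\{\psi_j\}$. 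Writing $u=\sum_j\psi_j u$ and noting that the change of variables distorts the relevant norms and the exponents $p,p^{\ast\ast}$ only by bounded factors (the bounds $1<p^-\le p^+<N$ being preserved), the problem reduces to the model inequality on the half-space: for $v\in C^1_c(\overline{\mathbb{R}^N_+})$,
\[
\big| v(\cdot,0)\big|_{p^{\ast\ast}(x),\,\mathbb{R}^{N-1}}\le C\,\|v\|_{W^{1,p(x)}(\mathbb{R}^N_+)}.
\]

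For this model inequality I would extend $v$ by even reflection across $\partial\mathbb{R}^N_+$ and apply the classical potential representation, which yields the pointwise bound $|v(x',0)|\le C\int_{\mathbb{R}^N_+}|(x',0)-y|^{-(N-1)}\,|\nabla v(y)|\,dy$; that is, $|v(\cdot,0)|$ is dominated on $\mathbb{R}^{N-1}$ by the restriction of the Riesz potential of order $1$ of $|\nabla v|$. The estimate then follows from the boundedness of the Riesz potential as a map $L^{p(x)}(\mathbb{R}^N)\to L^{p^{\ast\ast}(x)}(\mathbb{R}^{N-1})$ — equivalently, from the boundedness of the Hardy--Littlewood maximal operator on $L^{p(x)}$ together with a restriction argument. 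This last point is where the log-Hölder (here Lipschitz) continuity of $p$ is essential, and I regard it as \textbf{the main obstacle}: this Riesz-potential/trace--Sobolev inequality on variable Lebesgue spaces is the core of the proposition and may, alternatively, be quoted directly from \cite{D3,E1}. Once the critical case is available, the general continuous embedding is immediate, since $\partial\Omega$ has finite surface measure and $q(x)\le p^{\ast\ast}(x)$ imply the continuous inclusion $L^{p^{\ast\ast}(x)}(\partial\Omega)\hookrightarrow L^{q(x)}(\partial\Omega)$ (the boundary analogue of Remark 1), which composed with the critical trace embedding gives $W^{1,p(x)}(\Omega)\hookrightarrow L^{q(x)}(\partial\Omega)$.

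For the compactness statement, assume $q(x)<p^{\ast\ast}(x)$ on $\partial\Omega$; by continuity of $q$ and $p^{\ast\ast}$ and compactness of $\partial\Omega$, $\delta_0:=\min_{\partial\Omega}(p^{\ast\ast}-q)>0$. Let $(u_n)$ be bounded in $W^{1,p(x)}(\Omega)$; by reflexivity, up to a subsequence $u_n\rightharpoonup u$, and replacing $u_n$ by $u_n-u$ we may assume $u=0$. By the continuous critical embedding, $(u_n)$ is bounded in $L^{p^{\ast\ast}(x)}(\partial\Omega)$, say by $M'$. Since $\Omega$ is bounded, $W^{1,p(x)}(\Omega)\hookrightarrow W^{1,p^-}(\Omega)$ continuously, and the classical compact trace embedding $W^{1,p^-}(\Omega)\hookrightarrow\hookrightarrow L^1(\partial\Omega)$ (valid since $p^->1$ forces $1<(p^-)^{\ast\ast}$) gives, up to a further subsequence, $u_n\to 0$ in $L^1(\partial\Omega)$ and hence $u_n\to 0$ $dS$-a.e.\ on $\partial\Omega$. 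To upgrade this to convergence in $L^{q(x)}(\partial\Omega)$, I would show that $\{|u_n|^{q(x)}\}_n$ is uniformly integrable: with $r(x):=p^{\ast\ast}(x)/q(x)$ one has $r^->1$, $r$ bounded and conjugate exponent $r'$ with $(r')^+<\infty$, so Hölder's inequality (Proposition \ref{prop1}) combined with Proposition \ref{prop3} (using $q(x)r(x)=p^{\ast\ast}(x)$) and Proposition \ref{prop2} gives, for every measurable $E\subset\partial\Omega$,
\[
\int_E|u_n|^{q(x)}\,dS\le 2\,\big|\,|u_n|^{q(x)}\big|_{L^{r(x)}(E)}\,|1|_{L^{r'(x)}(E)}\le C\,(1+M')^{q^+}\,\max\!\big(S(E)^{1/(r')^{-}},\,S(E)^{1/(r')^{+}}\big),
\]
which tends to $0$ as $S(E)\to 0$, uniformly in $n$. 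By Vitali's convergence theorem, $\int_{\partial\Omega}|u_n|^{q(x)}\,dS\to 0$ along the chosen subsequence, whence $|u_n|_{q(x),\partial\Omega}\to 0$ by Proposition \ref{prop2}. Since this argument applies to an arbitrary subsequence, the full sequence converges, and the embedding $W^{1,p(x)}(\Omega)\hookrightarrow L^{q(x)}(\partial\Omega)$ is compact.
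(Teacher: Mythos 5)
The paper does not prove this proposition at all: it is quoted verbatim from the references \cite{D3,E1} (in the same way Propositions \ref{prop1}--\ref{prop6} are), so there is no in-paper argument to compare against; what you wrote has to be judged on its own. Your outline is the standard one and is essentially sound: localization and boundary flattening reduce the critical case to a half-space trace inequality, the even reflection plus the potential representation dominates the trace by a Riesz potential of order $1$ of $|\nabla v|$, and the whole weight of the critical continuity then rests on the boundedness of that potential from $L^{p(x)}(\mathbb{R}^N)$ into $L^{p^{\ast\ast}(x)}(\mathbb{R}^{N-1})$ --- which you correctly identify as the core and leave to the literature. Be aware of two points there. First, this step is not a routine ``maximal function plus restriction'' remark; the restriction of a Riesz potential to a hyperplane needs a Hedberg-type argument adapted to the boundary, and it genuinely requires log-H\"older (here Lipschitz) regularity of $p$, whereas the proposition as stated only assumes $p\in C_{+}(\overline\Omega)$; your implicit upgrade to Lipschitz is consistent with the paper's standing hypotheses but should be stated, since for merely continuous $p$ the critical case is not available. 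Second, your compactness proof, which is the genuinely self-contained part and is correct (weak-to-strong trace convergence via $W^{1,p^-}$, a.e.\ convergence on $\partial\Omega$, uniform integrability of $|u_n|^{q(x)}$ from the $L^{p^{\ast\ast}(x)}(\partial\Omega)$ bound with $r=p^{\ast\ast}/q$, then Vitali), funnels everything through the critical embedding; the subcritical compact statement can instead be obtained under mere continuity of $p$ by covering $\overline\Omega$ with small patches on which $\sup q<\inf p^{\ast\ast}$ and invoking classical constant-exponent trace embeddings locally, which is the route taken in the cited literature and avoids the delicate critical machinery altogether. So: no gap that would invalidate the argument under the paper's Lipschitz assumptions, but the critical-case core is quoted rather than proven, and the hypotheses needed for it are slightly stronger than those displayed in the proposition.
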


Considering $\Gamma \subset\partial \Omega$,$\Gamma \neq \partial \Omega $ a (possibly empty) closed set, and defining

\begin{gather*}
	W^{1,p(x)}_{\Gamma}(\Omega):= \overline{\{ \phi \in C^{\infty}(\overline{\Omega }) : \quad \text{ $\phi$ vanishes in a neighborhood of $\Gamma$ }\}},
\end{gather*}

where the closure is taken in $\| .\| _{W^{1,p(x)}(\Omega)}$ norm. This is the subspace of functions vanishing
on $\Gamma$. Obviously, $W_{\emptyset}^{1,p(x)}(\Omega) =W^{1,p(x)}(\Omega)$.  In general, $W_{\Gamma}^{1,p(x)}(\Omega)=W^{1,p(x)}(\Omega)$if and
only if the $p(x)$-capacity of $\Gamma$ is zero, for more details, we refer the interested readers to \cite{Har}. The best Sobolev trace constant
$T(p(.),q(.),\Gamma)$ is defined by

$$ 0<T(p(.),q(.),\Gamma):=\inf_{v\in W^{1,p(x)}_{\Gamma}(\Omega)}
\frac{\|  v  \|_{W^{1,p(x)}(\Omega)}}{\|v \|_{L^{q(x)}(\Omega)}}$$.

Now, we recall two important versions of the concentration-compactness principle of Lions for variable exponents found in \cite{Bonder, Bonder2}, which will be used in the proof of our main results.

\begin{theorem}[see \cite{Bonder2}]\label{ccp}
	
	Let $\Omega \subset\mathbb{R}^N$ be a bounded smooth domain, $p_i\in C_{+}(\overline{\Omega})$, $t_i \in C_{+}(\partial \Omega)$ with
	$$t_i(x)\leq p_i^{ \ast \ast }(x),\quad \forall x \in \partial \Omega.$$
		
	Let $\{u_n\}_{n\in\mathbb{N}}$ be a weakly convergent sequence in
	$W^{1,p(x)}(\Omega)$ with weak limit $u$, and such that:
	\begin{itemize}
		\item $|\nabla u_n|^{p_i(x)}\rightharpoonup\mu$ weakly-*
		in the sense of measures.
		
		\item $|u_n|^{t_i(x)}\longrightarrow\nu$ weakly-* in the sense of measures.
	\end{itemize}
	Also assume  that $\mathbf{K}_1 = \{x\in \partial\Omega\colon t_i(x)=p_i^{**}(x)\}$ is
	nonempty. Then, for some countable index set $J_1$, we have:
	\begin{gather}
		\nu=|u|^{t_i(x)} + \sum_{j\in I}\nu_J\delta_{x_j}\quad \nu_j>0\\
		\mu \geq |\nabla u|^{p_i(x)} + \sum_{j\in I} \mu_j \delta_{x_j} \quad \mu_j>0\\ \label{2.1}
	\overline{ T }_{x_j} \nu_i^{1/p_i^*(x_j)} \leq \mu_i^{1/p_i(x_j)} \quad \forall j\in J_1.
	\end{gather}
	where $\{x_j\}_{j\in J_i^1}\subset \mathbf{K}_1$, $\delta_{x_j}$ is the Dirac mass at  $x_j \in \overline{\Omega}$ and $\overline{ T }_{x_j}$ is the localized Sobolev trace constant,
	namely
	\begin{equation}\label{GNS}
		\overline{ T }_{x_j} :=\sup_{\epsilon >0}
		T(p_i(.),t_i(.),\Omega_{\epsilon,j},\Lambda_{\epsilon,j}),
	\end{equation}
where $\Omega_{\epsilon,j} =\Omega \cap B_{\epsilon}(x_j)$ and $\Lambda_{\epsilon,j} =\Omega \cap \partial B_{\epsilon}(x_j)$.
	
\end{theorem}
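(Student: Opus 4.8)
The plan is to adapt Lions' concentration--compactness principle to the variable-exponent trace setting, following the scheme of \cite{Bonder2} (and of its interior counterpart \cite{Bonder}). \textbf{Step 1 (reduction to $u=0$).} First I would put $v_n:=u_n-u$, so that $v_n\rightharpoonup 0$ in $W^{1,p_i(x)}(\Omega)$. Since $W^{1,p_i(x)}(\Omega)\hookrightarrow L^{p_i(x)}(\Omega)$ is compact (Proposition \ref{prop6}) and the trace operator into $L^{t_i(x)}(\partial\Omega)$ is continuous (Proposition \ref{prop7}), a Brezis--Lieb type splitting shows that the regular parts $|\nabla u|^{p_i(x)}$ and $|u|^{t_i(x)}$ separate out, and it is enough to prove the atomic structure for the limit measures $\widetilde\mu,\widetilde\nu$ attached to $v_n$.

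\textbf{Step 2 (reverse Hölder inequality for the limit measures).} For $\phi\in C^\infty(\overline\Omega)$ I would apply the best Sobolev trace inequality to $\phi v_n$, namely $T(p_i(\cdot),t_i(\cdot),\partial\Omega)\,|\phi v_n|_{t_i(x),\partial\Omega}\le \|\phi v_n\|_{1,p_i(x)}$. Writing $\nabla(\phi v_n)=\phi\nabla v_n+v_n\nabla\phi$ and using that $v_n\to0$ strongly in $L^{p_i(x)}(\Omega)$ kills the lower-order term; then passing to the limit with $|\nabla v_n|^{p_i(x)}\rightharpoonup\widetilde\mu$, $|v_n|^{t_i(x)}\rightharpoonup\widetilde\nu$ and converting Luxemburg norms into integrals against the measures by the modular bounds of Propositions \ref{prop2} and \ref{prop3}, one arrives at an inequality of the type
$$\Big(\int_{\partial\Omega}|\phi|^{t_i(x)}\,d\widetilde\nu\Big)^{1/\theta}\le C\,\Big(\int_{\Omega}|\phi|^{p_i(x)}\,d\widetilde\mu\Big)^{1/\kappa}$$
for all such $\phi$, with exponents $\theta,\kappa$ controlled by $p_i^{\pm},t_i^{\pm}$.

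\textbf{Step 3 (atomic decomposition, location of atoms and the sharp relation).} The variable-exponent analogue of Lions' first lemma applied to the inequality of Step 2 forces $\widetilde\nu$ to be supported on an at most countable set, $\widetilde\nu=\sum_j\nu_j\delta_{x_j}$ with $\nu_j>0$, and $\widetilde\mu\ge\sum_j\mu_j\delta_{x_j}$ with $\mu_j>0$; adding back the regular parts from Step 1 gives the first two displayed identities of the theorem. To see $x_j\in\mathbf{K}_1$: if $t_i(x_j)<p_i^{\ast\ast}(x_j)$, then by continuity $t_i<p_i^{\ast\ast}$ on a small ball around $x_j$, so the trace embedding is compact there (Proposition \ref{prop7}) and no mass of $\widetilde\nu$ can concentrate at $x_j$, a contradiction. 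Finally, testing the trace inequality with cut-offs $\phi_{\epsilon,j}$ equal to $1$ near $x_j$ and supported in $B_\epsilon(x_j)$, localizing the trace constant over $\Omega_{\epsilon,j}=\Omega\cap B_\epsilon(x_j)$ and letting $\epsilon\to0$ along the supremum in \eqref{GNS}, yields the sharp relation $\overline T_{x_j}\,\nu_j^{1/p_i^{\ast\ast}(x_j)}\le\mu_j^{1/p_i(x_j)}$.

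\textbf{Main obstacle.} The delicate point is Step 2 together with its localized refinement in Step 3: because the modular $\rho_p$ is not homogeneous, the exact scaling identities available for constant exponents are lost, and one must instead juggle the two-sided modular inequalities of Propositions \ref{prop2}--\ref{prop3}, carefully distinguishing the regimes where the relevant norms are $\ge1$ or $<1$, and then use the continuity of $p_i$ and $t_i$ to pass from estimates with exponents ``frozen'' at $x_j$ to the genuine variable-exponent ones as the localization radius shrinks to $0$. Obtaining a bound on the localized trace constant $\overline T_{x_j}$ that is uniform in $\epsilon$, so that the supremum in \eqref{GNS} is meaningful and attained in the limit, is the main analytic hurdle.
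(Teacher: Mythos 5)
This theorem is not proved in the paper at all: it is quoted verbatim (as Theorem \ref{ccp}) from \cite{Bonder2}, so there is no internal proof to compare against; your outline is, in essence, a reconstruction of the argument of that reference, and it is the right one. Your three steps — Brezis--Lieb separation of the regular parts (using the compact embeddings of Propositions \ref{prop6} and \ref{prop7} in the subcritical range to get a.e.\ convergence of the traces), a reverse-H\"older-type inequality between the limit measures obtained by testing the trace Sobolev inequality on $\phi u_n$ and discarding the lower-order term $u_n\nabla\phi$, and then the Lions-type atomic decomposition with the atoms forced into $\mathbf{K}_1$ by local compactness when $t_i(x_j)<p_i^{\ast\ast}(x_j)$ — follow the scheme of \cite{Bonder2}, and you correctly write the sharp relation with the boundary exponent $p_i^{\ast\ast}(x_j)=t_i(x_j)$ (the display \eqref{2.1} in the statement, with $p_i^{*}(x_j)$, is a typo of the paper). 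One small correction to your closing remark: no uniform-in-$\epsilon$ control of the localized constants is needed. For each fixed $\epsilon$ one tests with cutoffs supported well inside $B_\epsilon(x_j)$ (so that they lie in $W^{1,p_i(x)}_{\Lambda_{\epsilon,j}}(\Omega_{\epsilon,j})$), derives $T(p_i(.),t_i(.),\Omega_{\epsilon,j},\Lambda_{\epsilon,j})\,\nu_j^{1/t_i(x_j)}\le \mu_j^{1/p_i(x_j)}$ up to the usual frozen-exponent corrections, and since the right-hand side does not depend on $\epsilon$ the supremum in \eqref{GNS} is taken for free (it is in fact a monotone limit as $\epsilon\downarrow 0$). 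The genuine technical work, as you note, is the modular-versus-norm bookkeeping of Propositions \ref{prop2}--\ref{prop3} and the continuity argument freezing $p_i,t_i$ at $x_j$; that is exactly where the cited paper spends its effort.
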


\begin{theorem}[see \cite{Bonder}]\label{ccpp}
	Let $s_i(x)$ and $p_i(x)$ be two continuous functions such that
	$$
	1<\inf_{x\in\Omega}p_i(x)\le \sup_{x\in\Omega}p_i(x) < N \quad
	\text{and}\quad 1\le s_i(x)\le p_i^*(x)\quad \text{ in }	\overline{\Omega}.
	$$
	Let $\{u_n\}_{n\in\mathbb{N}}$ be a weakly convergent sequence in
	$W^{1,p(x)}(\Omega)$ with weak limit $u$, and such that:
	\begin{itemize}
		\item $|\nabla u_n|^{p_i(x)}\rightharpoonup\mu$ weakly-*
		in the sense of measures.
		
		\item $|u_n|^{s_i(x)}\longrightarrow\nu$ weakly-* in the sense
		of measures.
	\end{itemize}
	Also assume  that $\mathbf{K}_2= \{x\in 	\overline{\Omega}\colon s_i(x)=p_i^*(x)\}$ is
	nonempty. Then, for some countable index set $J$, we have:
	\begin{gather}
		\nu=|u|^{s_i(x)} + \sum_{j\in I}\nu_J\delta_{x_j}\quad \nu_j>0\\
		\mu \geq |\nabla u|^{p_i(x)} + \sum_{j\in I} \mu_j \delta_{x_j} \quad \mu_j>0\\ \label{2.1}
		S \nu_i^{1/p_i^*(x_j)} \leq \mu_i^{1/p_i(x_j)} \quad \forall j\in J.
	\end{gather}
	where $\{x_j\}_{j\in J_i^2}\subset \mathbf{K}_2$, $\delta_{x_j}$ is the Dirac mass at  $x_j \in \overline{\Omega}$ and $S$ is the best constant
	in the Gagliardo-Nirenberg-Sobolev inequality for variable exponents,
	namely
	\begin{equation}\label{GNSS}
		S = S_{q}(\Omega) :=\inf_{\phi\in C_0^{\infty}(\Omega)}
		\frac{\| |\nabla \phi| \|_{L^{p(x)}(\Omega)}}{\| \phi \|_{L^{q(x)}(\Omega)}}.
	\end{equation}
	
\end{theorem}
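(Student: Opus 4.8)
The plan is to transplant Lions' concentration--compactness argument to the variable exponent setting, following the approach of \cite{Bonder}. The proof splits into a reduction to the case $u\equiv 0$, a passage to the limit in the Gagliardo--Nirenberg--Sobolev inequality \eqref{GNSS}, and the extraction of the atomic part of $\nu$.

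First I would reduce to $u\equiv 0$. Since $1<p_i^-\le p_i(x)\le p_i^+<N$ on $\overline\Omega$, Proposition \ref{prop6}(2) gives the compact embedding $W^{1,p(x)}(\Omega)\hookrightarrow L^{p(x)}(\Omega)$, so $u_n\to u$ strongly in $L^{p(x)}(\Omega)$ and, along a subsequence, $u_n\to u$ a.e.\ in $\Omega$. Set $v_n:=u_n-u\rightharpoonup 0$ in $W^{1,p(x)}(\Omega)$. The Brezis--Lieb lemma applied with the exponent $s_i(\cdot)$ (legitimate since $u_n\to u$ a.e.\ and $\{|u_n|^{s_i(x)}\}$ is bounded in $L^1(\Omega)$) yields $|v_n|^{s_i(x)}\rightharpoonup\tilde\nu:=\nu-|u|^{s_i(x)}\ge 0$ weakly-$*$, and, after a further subsequence, $|\nabla v_n|^{p_i(x)}\rightharpoonup\tilde\mu\ge 0$ weakly-$*$ for some finite measure $\tilde\mu$. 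Testing against nonnegative $\phi\in C_c(\mathbb R^N)$, weak lower semicontinuity of $w\mapsto\int\phi|\nabla w|^{p_i(x)}dx$ together with $\int\phi\,d\mu=\lim_n\int\phi|\nabla u_n|^{p_i(x)}dx$ gives $\mu\ge|\nabla u|^{p_i(x)}$, while a local triangle inequality in $L^{p_i(x)}(B_\varepsilon(x_0))$ combined with $\int_{B_\varepsilon(x_0)}|\nabla u|^{p_i(x)}dx\to 0$ as $\varepsilon\to 0$ gives $\mu(\{x_0\})\ge\tilde\mu(\{x_0\})$ for every $x_0$. Hence it suffices to prove the decomposition for $\tilde\nu$ and $\tilde\mu$, i.e. to treat $v_n$ with zero weak limit.

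Assume now $u\equiv 0$. Fix $\phi\in C_0^\infty(\mathbb R^N)$. Then $\phi v_n\in W^{1,p(x)}(\Omega)$, and \eqref{GNSS} applied to $\phi v_n$, together with $\nabla(\phi v_n)=\phi\nabla v_n+v_n\nabla\phi$ and the variable exponent triangle inequality, gives
$$S\,\bigl\|\phi v_n\bigr\|_{L^{s_i(x)}(\Omega)}\le\bigl\|\,|\phi|\,|\nabla v_n|\,\bigr\|_{L^{p_i(x)}(\Omega)}+\bigl\|\,|v_n|\,|\nabla\phi|\,\bigr\|_{L^{p_i(x)}(\Omega)}.$$
The last term tends to $0$ since $v_n\to 0$ in $L^{p_i(x)}(\Omega)$ and $\nabla\phi$ is bounded. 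Because $|\phi|^{s_i(x)}$ and $|\phi|^{p_i(x)}$ are bounded continuous functions, the weak-$*$ convergences give $\int_\Omega|\phi v_n|^{s_i(x)}dx\to\int|\phi|^{s_i(x)}\,d\tilde\nu$ and $\int_\Omega|\phi\nabla v_n|^{p_i(x)}dx\to\int|\phi|^{p_i(x)}\,d\tilde\mu$; converting these modular convergences into convergence of the corresponding Luxemburg norms relative to $\tilde\nu$ and $\tilde\mu$ via Propositions \ref{prop2} and \ref{prop4}, and passing to the limit, yields
$$S\,\|\phi\|_{L^{s_i(x)}(\tilde\nu)}\le\|\phi\|_{L^{p_i(x)}(\tilde\mu)}\qquad\text{for every }\phi\in C_0^\infty(\mathbb R^N),$$
where $\|\cdot\|_{L^{r(x)}(\lambda)}$ denotes the Luxemburg norm computed with the measure $\lambda$.

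Finally I would extract the atomic structure from this reverse H\"older inequality. If $x_0\notin\mathbf{K}_2$, then $s_i(x_0)<p_i^{\ast}(x_0)$, so by continuity $s_i<p_i^{\ast}$ on a ball $B\ni x_0$; then $W^{1,p(x)}(B)\hookrightarrow L^{s_i(x)}(B)$ is compact (Proposition \ref{prop6}(2)), hence $v_n\to 0$ in $L^{s_i(x)}(B)$ and $\tilde\nu(B)=0$. Thus $\operatorname{supp}\tilde\nu\subset\mathbf{K}_2$. For $x_0\in\mathbf{K}_2$, testing with cut-offs $\phi_\varepsilon$ satisfying $0\le\phi_\varepsilon\le1$, $\phi_\varepsilon\equiv1$ on $B_{\varepsilon/2}(x_0)$ and $\operatorname{supp}\phi_\varepsilon\subset B_\varepsilon(x_0)$, estimating both sides by Proposition \ref{prop2}, and letting $\varepsilon\to 0$ --- so that the continuity of $p_i,s_i$ lets one \emph{freeze} the exponents at $p_i(x_0),s_i(x_0)$, and the identity $s_i(x_0)=p_i^{\ast}(x_0)$ valid on $\mathbf{K}_2$ makes the two homogeneities match --- one obtains that $\tilde\nu$ cannot charge any set on which $\tilde\mu$ has no atom. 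Hence $\tilde\nu=\sum_{j\in J}\nu_j\delta_{x_j}$ with $x_j\in\mathbf{K}_2$, $\nu_j:=\tilde\nu(\{x_j\})>0$ and $J$ countable (since $\tilde\mu$ is finite), and, setting $\mu_j:=\tilde\mu(\{x_j\})$, the frozen inequality reads $S\,\nu_j^{1/p_i^{\ast}(x_j)}\le\mu_j^{1/p_i(x_j)}$. Combining $\nu=|u|^{s_i(x)}+\tilde\nu$, $\mu\ge|\nabla u|^{p_i(x)}$, $\mu(\{x_j\})\ge\tilde\mu(\{x_j\})=\mu_j$, and the absence of atoms of $|\nabla u|^{p_i(x)}dx$, yields exactly the stated decompositions. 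I expect the genuine obstacle to be this last step: the variable exponent analogue of Lions' \emph{reverse H\"older implies atomic} lemma, where the non-homogeneity of the Luxemburg norm forces a careful localization and makes essential use of $s_i(x)=p_i^{\ast}(x)$ holding precisely on $\mathbf{K}_2$; the Brezis--Lieb step for variable exponents and the passage from modular convergence to norm convergence against the limiting measures are the remaining technical points.
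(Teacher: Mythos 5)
This theorem is not proved in the paper at all: it is quoted verbatim (up to notation) from \cite{Bonder} as a known tool, so there is no internal proof to compare against. Your sketch reconstructs essentially the argument of that reference — reduction to zero weak limit via a variable-exponent Brezis--Lieb step, passage to the limit in the Sobolev inequality \eqref{GNSS} against cut-offs to obtain the reverse H\"older inequality $S\,\|\phi\|_{L^{s_i(x)}(\tilde\nu)}\le\|\phi\|_{L^{p_i(x)}(\tilde\mu)}$, elimination of $\tilde\nu$ away from $\mathbf{K}_2$ by local compactness of the subcritical embedding, and the Lions-type ``reverse H\"older implies purely atomic'' conclusion with frozen exponents at each atom — and the outline is sound.

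One caveat worth recording: the constant $S$ in \eqref{GNSS} is defined through test functions in $C_0^{\infty}(\Omega)$, so the inequality $S\|\phi v_n\|_{L^{s_i(x)}(\Omega)}\le\|\nabla(\phi v_n)\|_{L^{p_i(x)}(\Omega)}$ is legitimate only when $\phi v_n$ lies in $W_0^{1,p_i(x)}(\Omega)$; this is automatic for sequences in $W_0^{1,p_i(x)}(\Omega)$ (the setting of the original result in \cite{Bonder}), but for a sequence that is merely in $W^{1,p_i(x)}(\Omega)$, as the statement here is transcribed, atoms sitting on $\partial\Omega$ are not reached by this argument and require either a localized constant or the trace version of the principle (Theorem \ref{ccp}, from \cite{Bonder2}). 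This imprecision is inherited from the paper's loose restatement rather than a defect of your strategy, but in a complete write-up you should either restrict to $W_0^{1,p_i(x)}(\Omega)$ or replace $S$ by the appropriate localized best constants at boundary points.
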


\begin{definition}
	Let $X$ be a Banach space and a functional $E\in C^1(X,\mathbb{R})$. Given sequence $(u_m)$ in $X$, if there exist $c\in \mathbb{R}$ such that
	\begin{equation}
	E(u_m) \to d   \text{ and } E'(u_m)\to 0 \text{ in } X',
	\end{equation}
	we say that $(u_m)$ is a Palais-Smale sequence with energy level $c$ ( or $(u_m)$ is $(PS)_c$ for short).
	When any $(PS)_c$ sequence for $E$ possesses some strongly convergent subsequence in $X$, we say that $E$ satisfies the Palais-Smale condition at level $c$ (or $E$ is $(PS)_c$ short).
\end{definition}
Our main tools are the classical Mountain Pass Theorem and the Rabinowitz’s $\mathbb{Z}_2$–symmetric version, that is, for even functionals, recalled
respectively in the next Theorems.
\begin{theorem}[see \cite{Rabinowitz}] \label{PMT}
	Let $X$ be a real infinite dimensional Banach space and $E\in C^{1}(X,\mathbb{R})$
	such that $E(0_X)=0$ and satisfying the (PS) condition. Suppose that
	
	\begin{itemize}
		\item[($\mathcal{I}_1$)] There are  $\mathcal{R}, \rho>0$ such that $E(u)\geq\mathcal{R}$ and for all $u\in \partial B_\rho\cap X$;
		\item[($\mathcal{I}_2$)]  There exists $e\in X$ with $\left\| e\right\| >\rho$ such that $E(e)<0$.
	\end{itemize}
	Then, $E$ possesses a critical value $c\geq \mathcal{R}$, which can be characterized as
	$$c := \inf_{\xi \in \Gamma} \sup_{t\in \left[ 0,1\right]  }E_\lambda(\xi(t)),$$
	where
	$$\Gamma= \left\lbrace  \xi : \left[ 0,1\right] \to X, \text{continuous and } \xi(0)=0_X, E(\xi (1))<0\right\rbrace. $$	
\end{theorem}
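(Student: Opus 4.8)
The plan is to establish the theorem by the classical min-max/deformation argument, arguing by contradiction. \textbf{Step 1 (the level $c$ is admissible and $c\ge\mathcal{R}$).} First I would check that $\Gamma\neq\emptyset$: the segment $\xi_0(t)=t\,e$ is continuous, $\xi_0(0)=0_X$, and $E(\xi_0(1))=E(e)<0$ by $(\mathcal{I}_2)$, so $\xi_0\in\Gamma$ and $c<+\infty$. For the lower bound, fix any $\xi\in\Gamma$; since $t\mapsto\|\xi(t)\|$ is continuous with $\|\xi(0)\|=0<\rho<\|e\|=\|\xi(1)\|$, the intermediate value theorem applied to $t\mapsto\|\xi(t)\|-\rho$ produces $t_\xi$ with $\xi(t_\xi)\in\partial B_\rho$, whence $\sup_{t\in[0,1]}E(\xi(t))\ge E(\xi(t_\xi))\ge\mathcal{R}$ by $(\mathcal{I}_1)$; taking the infimum over $\Gamma$ yields $c\ge\mathcal{R}>0$.

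\textbf{Step 2 (uniform gradient bound and deformation).} Suppose, for contradiction, that $c$ is \emph{not} a critical value, i.e. $K_c:=\{u\in X:\ E(u)=c,\ E'(u)=0\}=\emptyset$. Then I would show, using the $(PS)$ condition at level $c$, that there are $\alpha>0$ and $\varepsilon_0\in(0,c/2)$ with $\|E'(u)\|_{X'}\ge\alpha$ whenever $|E(u)-c|\le 2\varepsilon_0$; otherwise one extracts a $(PS)_c$ sequence, whose limit (provided by the $(PS)$ condition) would be a critical point at level $c$, contradicting $K_c=\emptyset$. With this bound in hand, the standard quantitative deformation lemma provides, for some $\varepsilon\in(0,\varepsilon_0)$, a continuous map $\eta\colon[0,1]\times X\to X$ such that $\eta(0,\cdot)=\mathrm{id}$, $\eta(s,u)=u$ whenever $|E(u)-c|\ge 2\varepsilon_0$, the map $s\mapsto E(\eta(s,u))$ is non-increasing, and $E(\eta(1,u))\le c-\varepsilon$ whenever $E(u)\le c+\varepsilon$. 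The construction of $\eta$ is where I expect the main difficulty to lie: since $X$ is only a Banach space one cannot flow along $-E'$ directly and must instead build a locally Lipschitz pseudo-gradient vector field for $E$ on $\{E'\neq 0\}$ (via a partition of unity), truncate it to the strip $\{|E-c|\le 2\varepsilon_0\}$ and normalize it so that the resulting ODE flow is defined for all $s\in[0,1]$.

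\textbf{Step 3 (contradiction).} Finally, by the definition of $c$ as an infimum I would pick $\xi\in\Gamma$ with $\sup_{t\in[0,1]}E(\xi(t))\le c+\varepsilon$ and set $\widetilde\xi:=\eta(1,\xi(\cdot))$. This is continuous; moreover $\varepsilon_0<c/2$ gives $c-2\varepsilon_0>0$, so $E(0_X)=0<c-2\varepsilon_0$ and $E(\xi(1))<0<c-2\varepsilon_0$, hence $\eta$ leaves both endpoints fixed, giving $\widetilde\xi(0)=0_X$ and $E(\widetilde\xi(1))=E(\xi(1))<0$; thus $\widetilde\xi\in\Gamma$. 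But then $\sup_{t\in[0,1]}E(\widetilde\xi(t))\le c-\varepsilon<c$, contradicting $c=\inf_{\xi\in\Gamma}\sup_{t\in[0,1]}E(\xi(t))$. Therefore $K_c\neq\emptyset$, so $E$ possesses a critical value at level $c\ge\mathcal{R}$, and the asserted min-max formula for $c$ is simply its definition.
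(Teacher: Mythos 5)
Your argument is the classical deformation-lemma proof of the Mountain Pass Theorem (pseudo-gradient field, quantitative deformation in the strip $\{|E-c|\le 2\varepsilon_0\}$, min--max contradiction), which is exactly the proof in the cited reference of Rabinowitz; the paper itself gives no proof, so your approach coincides with the intended one. One small caution: with the paper's (nonstandard) definition of $\Gamma$, a path only satisfies $E(\xi(1))<0$, so you cannot write $\|\xi(1)\|=\|e\|$ in the intermediate-value step --- in the classical formulation one takes $\xi(1)=e$ with $\|e\|>\rho$, and then your crossing argument for $c\ge\mathcal{R}$ is exactly right.
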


\begin{theorem}[see \cite{Rabinowitz}] \label{SPMT}
	Let $X$ be a real infinite dimensional Banach space and $E\in C^{1}(X,\mathbb{R})$ be even, satisfying the Palais-Smale condition and  $E(0_X)=0$. Suppose that condition ($\mathcal{I}_1$) holds in addition to the following:	
	\begin{itemize}
		\item[($\mathcal{I}_2 '$)]
		For each finite dimensional subspace $X_1 \subset X$, the set
		$S_1:= \{ u \in X_1: E(u) \geq 0\}$ is bounded in $X$.
	\end{itemize}
	Then $E$ has has an unbounded sequence of critical values.
\end{theorem}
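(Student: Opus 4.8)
The plan is to prove Theorem~\ref{SPMT} by a $\mathbb{Z}_2$-symmetric minimax argument built on the Krasnoselskii genus. Write $\gamma(A)$ for the genus of a closed symmetric set $A\subset X\setminus\{0_X\}$ (that is, $A=-A$). I will use the standard properties of the genus: monotonicity ($A\subset B\Rightarrow\gamma(A)\le\gamma(B)$), subadditivity ($\gamma(A\cup B)\le\gamma(A)+\gamma(B)$), invariance under odd homeomorphisms, the normalization $\gamma(\partial U)=k$ for the relative boundary of a bounded symmetric neighbourhood of $0_X$ in a $k$-dimensional subspace, finiteness on compact sets, and the excision property (every compact symmetric $A$ with $0_X\notin A$ admits an open symmetric neighbourhood $\mathcal{N}$ with $\gamma(\overline{\mathcal{N}})=\gamma(A)$). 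Together with the (equivariant) deformation lemma these are the only structural facts needed, and no feature of variable exponents enters, the statement being purely abstract.

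Next I would introduce the minimax levels. For $j\in\mathbb{N}$ let $\Gamma_j$ be a suitable family of compact symmetric subsets of $X\setminus\{0_X\}$ of genus at least $j$, constrained --- in the spirit of Rabinowitz's construction through odd maps from a finite-dimensional ball that are the identity on its boundary sphere --- so that every member of $\Gamma_j$ meets the sphere $\partial B_\rho$, that $\Gamma_{j+1}\subset\Gamma_j$, and that $\Gamma_j$ is stable under the odd homeomorphisms of $X$ produced by the deformation lemma (and such that cutting out a genus-$q$ neighbourhood lowers the index by at most $q$). Put
\[
c_j:=\inf_{A\in\Gamma_j}\ \sup_{u\in A}E(u).
\]
Hypothesis $(\mathcal{I}_2')$ makes $\Gamma_j$ nonempty with finite minimax value: given a $j$-dimensional subspace $X_j\subset X$, $(\mathcal{I}_2')$ yields $R_j>0$ with $E\le 0$ on $X_j\setminus B_{R_j}$, and a closed ball of radius $R_j$ in $X_j$ then produces a competitor on which $E$ is bounded above, being a compact set on which $E$ is continuous; hence $c_j<\infty$. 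Since every $A\in\Gamma_j$ meets $\partial B_\rho$, hypothesis $(\mathcal{I}_1)$ forces $c_j\ge\mathcal{R}>0$ for all $j$; and because $\Gamma_{j+1}\subset\Gamma_j$ we get $\mathcal{R}\le c_1\le c_2\le\cdots$.

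The two remaining points are that each $c_j$ is a critical value of $E$ and that $c_j\to+\infty$; both rest on the equivariant deformation lemma, available because $E$ satisfies the Palais--Smale condition and is even, so a negative pseudo-gradient flow can be symmetrized into an odd deformation. If $c_j$ were a regular value, the deformation lemma would give $\epsilon>0$ and an odd homeomorphism $\eta$ of $X$ with $\eta\big(\{E\le c_j+\epsilon\}\big)\subset\{E\le c_j-\epsilon\}$; choosing $A\in\Gamma_j$ with $\sup_AE\le c_j+\epsilon$, the set $\eta(A)$ again lies in $\Gamma_j$ while $\sup_{\eta(A)}E\le c_j-\epsilon<c_j$, contradicting the definition of $c_j$. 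Hence each $c_j$ is a critical value, and the usual refinement using the excision property shows that if $c_j=c_{j+1}=\cdots=c_{j+p}=c$ then $\gamma(K_c)\ge p+1$, where $K_c=\{u\in X:E'(u)=0,\ E(u)=c\}$, which is impossible for large $p$ since $K_c$ is compact by Palais--Smale. For unboundedness, suppose $c_j\le\bar c$ for all $j$; by Palais--Smale the set of critical points with energy in $[\mathcal{R},\bar c+1]$ is compact, so it has finite genus $q$ and an open symmetric neighbourhood $\mathcal{N}$ with $\gamma(\overline{\mathcal{N}})=q$, and since every critical point in that band lies in $\mathcal{N}$, a one-step deformation yields an odd homeomorphism $\eta$ with $\eta\big(\{E\le\bar c+\epsilon\}\setminus\mathcal{N}\big)\subset\{E<\mathcal{R}\}$. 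Taking $A\in\Gamma_{q+1}$ with $\sup_AE\le c_{q+1}+\epsilon\le\bar c+\epsilon$, the set $\eta(A\setminus\mathcal{N})$ is compact, symmetric, has genus $\ge\gamma(A)-q\ge1$ and --- here the stability properties of the class are used --- is again a competitor in $\Gamma_1$; but then it must meet $\partial B_\rho$, so $\sup_{\eta(A\setminus\mathcal{N})}E\ge\mathcal{R}$, contradicting $\sup_{\eta(A\setminus\mathcal{N})}E<\mathcal{R}$. Therefore $c_j\to+\infty$, and $\{c_j\}$ is the required unbounded sequence of critical values.

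I expect the main obstacle to be the very first step --- pinning down the minimax class $\Gamma_j$. It must be simultaneously (i) nonempty with finite minimax value, which is exactly where $(\mathcal{I}_2')$ enters; (ii) restrictive enough that every member genuinely links $\partial B_\rho$, so that $(\mathcal{I}_1)$ delivers the positive lower bound $c_j\ge\mathcal{R}$ (a naive ``all symmetric sets of genus $\ge j$'' class fails here, since such a set can be pushed out to where $E\le 0$); and (iii) nested, $\Gamma_{j+1}\subset\Gamma_j$, stable under the odd homeomorphisms from the deformation lemma, with the index dropping by at most $q$ when a genus-$q$ neighbourhood is excised --- these are what make the critical-value and unboundedness arguments close. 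Rabinowitz's device of odd maps that restrict to the identity on a finite-dimensional sphere reconciles (i)--(iii) at once; granting it, the equivariant deformation lemma and the elementary properties of the genus make the remainder routine.
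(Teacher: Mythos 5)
First, a point of reference: the paper does not prove Theorem~\ref{SPMT} at all --- it is quoted verbatim from Rabinowitz's monograph \cite{Rabinowitz} and used as a black box in the proof of Theorem~\ref{thm2.2} --- so there is no internal argument to compare against; your proposal has to be measured against the classical proof you are implicitly reconstructing. Your outline does follow that classical route (genus, minimax levels $c_j$, equivariant deformation, excision, and the two-step conclusion ``each $c_j$ is critical'' plus ``$c_j\to+\infty$''), and those closing arguments are stated correctly.

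The genuine gap is the one you yourself flag and then set aside: the minimax classes $\Gamma_j$ are never constructed, and everything nontrivial in the theorem lives in that construction. As written, your description is even internally inconsistent: you require members of $\Gamma_j$ to be compact symmetric subsets of $X\setminus\{0_X\}$ of genus at least $j$ that meet $\partial B_\rho$, yet the competitor you extract from $(\mathcal{I}_2')$ is a closed ball of radius $R_j$ in a $j$-dimensional subspace, which contains $0_X$ (so has no genus in your sense) and, if replaced by its boundary sphere, satisfies $E\le 0$ there and does not meet $\partial B_\rho$, which would destroy the bound $c_j\ge\mathcal{R}$. In Rabinowitz's actual proof the class is defined structurally, as sets $h\bigl(\overline{B_{R_m}\cap X_m\setminus Y}\bigr)$ with $h$ odd, continuous, equal to the identity on $\partial B_{R_m}\cap X_m$ (this is where $(\mathcal{I}_2')$ enters, via $E\le 0$ on that sphere, which also keeps the class stable under the deformation because the flow fixes the sublevel set $\{E\le 0\}$ when working near a level $c_j\ge\mathcal{R}>0$), and $Y$ symmetric with $\gamma(Y)\le m-j$; the facts that such sets meet $\partial B_\rho$ (a Borsuk--Ulam-type intersection proposition) and that excising a genus-$q$ set drops the index by at most $q$ are theorems about this specific class, not axioms one may postulate. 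Since the lower bound $c_j\ge\mathcal{R}$, the finiteness $c_j<\infty$, the stability under the odd deformation, and the excision step in your unboundedness argument all hinge on exactly these unproved properties, the proposal as it stands is an accurate road map of \cite{Rabinowitz} rather than a proof: the heart of the theorem --- reconciling your requirements (i)--(iii) --- is asserted, not established.
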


In the following discussions, we will use the product space
\[
X:=\prod_{i=1}^{n}\Big(W^{1,p_i(x)}(\Omega)\cap W^{1,\gamma_i(x)}(\Omega)\Big),
\]
which is equipped with the norm
\[
\| u\|:=\max \left\lbrace  \|  u_i\|_{i} \right\rbrace ,~~~~~ \forall u=(u_1,u_2,...,u_n)\in X,
\]
where $\| u_i\|_{i}:= \| \nabla u_i\|_{p_i(x)}+ \mathcal{H}(k_i^3)\| \nabla u_i\|_{q_i(x)}$ is the norm of $W^{1,p_i(x)}(\Omega)\cap W^{1,\gamma_i(x)}(\Omega)$. The space $X^{\star}$ denotes the dual space of $X$ and equipped with the usual dual norm.

\begin{definition}
	Let $X$ be a Banach space, an element 	$u=(u_1,u_2,...,u_n) \in X$ is called a weak solution of the system  \eqref{s1.1} if 	
	\begin{align*}
		\sum_{i=1}^{n} M_i\left(
		\mathcal{A}_i(u_i)\right)\int_{\Omega  } a_i(|\nabla u_{i}| ^{p_i(x)}) |\nabla u_{i}| ^{p_i(x)-2}\nabla u_{i}\nabla v_i \,dx - \sum_{i=1}^{n}\int_{\Omega  }|u_i|^{s_i(x)-2}u_i v_i\,dx  \\
	- \sum_{i=1}^{n}\int_{\partial \Omega  }|u_i|^{t_i(x)-2}u_i v_i\,d\sigma_x	- \sum_{i=1}^{n}\displaystyle\int_{\Omega  }\lambda(x)F_{u_i}(x,u_1,...u_n)v_i\,dx =0,
	\end{align*}
	for all $v=(v_1,v_2,...,v_n)\in X= \prod_{i=1}^{n}(W^{1,p_i(x)}(\Omega)\cap W^{1,\gamma_i(x)}(\Omega))$.
\end{definition}

We denote by $E_{\lambda}$ the energy functional associated with the problem \eqref{s1.1}
$$E_{\lambda}(.):= \Phi(.)- \Theta (.) -\Upsilon(.)-\Psi_{\lambda}(.),$$
where $\Phi, \Theta \text{ and } \Psi:X\longrightarrow\mathbb{R} $
are defined as follows
\begin{align*}
	\Phi(u)&=\sum_{i=1}^{n}\widehat{M_i}\left(
	\mathcal{A}_i(u_{i}(x))\right),
	\\	
	\Theta(u)&=	\sum_{i=1}^{n}\displaystyle\int_{\Omega  }\dfrac{1}{s_i(x)}|u_i|^{s_i(x)}dx, \\
	\Upsilon(u)&=	\sum_{i=1}^{n}\displaystyle\int_{\partial\Omega  }\dfrac{1}{t_i(x)}|u_i|^{t_i(x)}d\sigma_x, \\
	\Psi_{\lambda}(u)&=\int_{\Omega}\lambda(x)F(x,u_1(x),...,u_n(x))dx,
\end{align*}
for any $u=(u_1,...,u_n)$ in $X$.\par

In order to ensure that the function $F$ satisfies the topological conditions and the geometric conditions of the mountain pass theorem (see \cite{Rabinowitz}), we assume some growth conditions.

	\subsection*{Hypotheses}
\begin{itemize}
	\item[($\mathcal{F}1$)] $\quad F\in C^{1}(\Omega \times \mathbb{R}^n,\mathbb{R})$ and $F(x,0,...,0)=0$
	
	\item[($\mathcal{F}2$)]  $\quad$ There exist positive functions $b_{ij}$ ($1\leq i,j \leq n$), such that
	\begin{gather*}
		\Big| \frac{\partial F}{\partial u_i}(x,u_1,...,u_n)\Big| \leq
		\sum_{j=1}^{n} b_{ij}(x)| u_j|^{\ell_{ij}-1},
	\end{gather*}
	where
	$1<\ell_{ij}<\inf_{x\in \Omega}\gamma_i(x)$ for all $x\in \Omega $ and for all $(i,j)\in \left\lbrace 1,2,...,n\right\rbrace^2$. The weight-functions $b_{ii}$ (resp  $b_{ij}$ if $i\neq j$) belong to the generalized Lebesgue spaces $L^{\alpha _i}(\Omega)$ (resp $L^{\alpha_{ij} }(\Omega)$), with
	\[
	\alpha _i(x)=\frac{\gamma_i(x)}{\gamma_i(x)-1}, \quad \alpha_{ij} (x)
	=\frac{\gamma_i^{\ast }(x)\gamma_j^{\ast }(x)}{\gamma_i^{\ast }(x)\gamma_j^{\ast }(x)-\gamma_i^{\ast
		}(x)-\gamma_j^{\ast }(x)}.
	\]

	\textbf{Example:}
	We give an  example of potential  $F$ satisfying hypotheses $(\mathcal{F}1)$ and $(\mathcal{F}2)$ for $n=2$. Let
	\[
	F(x,u_1,u_2)=a(x)|u_1|^{l_1(x)}|u_2|^{l_2(x)},
	\]
	where $\frac{l_1(x)}{\gamma_1(x)}+\frac{l_2(x)}{\gamma_2(x)}< 1$ and $a$ is a positive function in $L^{s(x)}(\Omega ) $ such that\\ $s(x)= \dfrac{\gamma_1^{\star}(x)\gamma_2^{\star}(x)}{\gamma_1^{\star}(x)\gamma_2^{\star}(x) -l_1(x) \gamma_2^{\star}(x)-l_2(x) \gamma_2^{\star}(x)}$ for each $x\in \Omega $.\\
	
	We can easily verify that $F(x,u_1,u_2)$ satisfies the condition $(\mathcal{F}1)$. Moreover, by using Young inequality we easily check that the condition $(\mathcal{F}2)$.\par

	\item[($\mathcal{F}3$)] There exist $K>0$ and $ \exists  \theta_i \in (\gamma_i^+, \inf\{s_i^-, t_i^-\})$ for all $(x,u_1,...,u_n)\in \Omega\times\mathbb{R}^n$ where $| u_i|^{\theta _i}\geq K $
	
	\[
	0 < F(x,u_1,...,u_n)< \sum_{i=1}^{n}  \frac{u_i}{\theta_i}\frac{\partial F}{\partial u_i}(x,u_1,...,u_n).
	\]

	\item[($\mathcal{F}4$)]	There exists $ c>0 $ such that
	\[
	|F(x,u_1,...,u_n)| \leq c\Big (\sum_{i=1}^{n} | u_i|^{r_i(x)}\Big),  \forall (x,u_1,...,u_n)\in  \Omega\times\mathbb{R}^n,
	\]
	where 	$r_i \in C_{+}(\overline{\Omega})$ and  $q_i^+< r_i^-\leq r_i^+<<  \inf\{s_i^-, t_i^-\} \leq \inf\{s_i^+, t_i^+\}\quad \forall 1\leq  i\leq n $.		
\end{itemize}

Note that according to the above hypothesis,  we have   $E_{\lambda }\in C^1(X,\mathbb{R})$ and for all $v=(v_1,v_2,...,v_n)\in  X$

\begin{align*}
	E_{\lambda }'(u)v&= \sum_{i=1}^{n} M_i\left(
	\mathcal{A}_i(u_i)\right)\int_{\Omega  } a_i(|\nabla u_{i}| ^{p_i(x)}) |\nabla u_{i}| ^{p_i(x)-2}\nabla u_{i}\nabla v_i \,dx - \sum_{i=1}^{n}\int_{\Omega  }|u_i|^{s_i(x)-2}u_i v_i\,dx \\
	&\quad - \sum_{i=1}^{n}\int_{\partial\Omega  }|u_i|^{t_i(x)-2}u_i v_i\,d\sigma_x
	- \sum_{i=1}^{n}\displaystyle\int_{\Omega} \lambda(x) F_{u_i}(x,u_1,...u_n)v_i\,dx .
\end{align*}

Consequently, $u=(u_1,u_2,...,u_n)$ in $X$ is a weak solution of \eqref{s1.1} if and only if $u$ is a critical point of $E_{\lambda }$. \par

We end this section by stating the following existence and multiplicity results.

\begin{theorem} \label{thm2.1}
	suppose that $(\mathcal{M}_1)-(\mathcal{M}_2)$ and
	$(\mathcal{F}1)-(\mathcal{F}4)$ hold. Then, there exists a constant $\lambda_{\star} >0$, such that if $\lambda(x)$ verifies
	$0<\inf_{ x\in\Omega}\lambda(x)\leq \| \lambda\|_{L^{\infty}(\Omega)}\leq \lambda_{\star}$, then
	problem \eqref{s1.1} has at least one nontrivial solution in $X$.
	
\end{theorem}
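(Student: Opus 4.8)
\medskip
\noindent\emph{Proof strategy.}
The plan is to obtain a nontrivial critical point of the energy functional $E_{\lambda}=\Phi-\Theta-\Upsilon-\Psi_{\lambda}$ on the infinite dimensional reflexive space $X$ by means of the Mountain Pass Theorem (Theorem \ref{PMT}). It has already been recorded that $E_{\lambda}\in C^{1}(X,\mathbb{R})$; moreover $E_{\lambda}(0)=0$ because $\mathcal{A}_i(0)=0$, $\widehat{M}_i(0)=0$ and $F(x,0,\dots,0)=0$ by $(\mathcal{F}1)$, and $(\mathcal{F}2)$ (with $\ell_{ij}<\inf_{\Omega}\gamma_i$) makes $\Psi_{\lambda}$ of class $C^{1}$ with \emph{compact} derivative. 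Hence it remains to verify the geometric conditions $(\mathcal{I}_1)$ and $(\mathcal{I}_2)$ and the Palais--Smale condition at the minimax level $c$, the latter being the only delicate point owing to the critical growth of $\Theta$ (interior) and of $\Upsilon$ (on $\partial\Omega$).

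\smallskip
\noindent\emph{Mountain pass geometry.} For $(\mathcal{I}_1)$ I would argue componentwise near the origin: by $(\mathcal{M}_1)$ one has $\Phi(u)\ge\sum_i\mathfrak{M}_i^{0}\mathcal{A}_i(u_i)$, which by the lower bound in $(\textbf{\textit{H}}_2)$ and Proposition \ref{prop2} dominates a sum of strictly positive powers $\|u_i\|_i^{a_i}$ when $\|u\|\le 1$ (with $a_i$ equal to $p_i^{+}$ or $q_i^{+}$); while the continuous embeddings $W^{1,\gamma_i(x)}(\Omega)\hookrightarrow L^{s_i(x)}(\Omega)$ and $W^{1,\gamma_i(x)}(\Omega)\hookrightarrow L^{t_i(x)}(\partial\Omega)$ (Propositions \ref{prop6} and \ref{prop7}), together with $(\mathcal{F}4)$, bound $\Theta(u)+\Upsilon(u)+\Psi_{\lambda}(u)$ by a sum of terms comparable to $\|u_i\|_i^{b}$ with $b\in\{s_i^{-},t_i^{-},r_i^{-}\}$, each of these exponents strictly exceeding the corresponding $a_i$ by the standing assumptions and $(\mathcal{F}4)$. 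Choosing $\rho$ small --- uniformly in $\lambda$ for $\|\lambda\|_{L^{\infty}}\le\lambda_{\star}$ --- gives $E_{\lambda}\ge\mathcal{R}>0$ on $\partial B_{\rho}$. For $(\mathcal{I}_2)$ I fix $w\in X\setminus\{0\}$ and study $t\mapsto E_{\lambda}(tw)$: the bound $A_i(\tau)\ge\beta_i^{-1}a_i(\tau)\tau$ of $(\textbf{\textit{H}}_4)$ together with $(\mathcal{M}_2)$ forces $\Phi(tw)\le Ct^{\mu}$ with $\mu<\theta_i$, whereas the Ambrosetti--Rabinowitz type condition $(\mathcal{F}3)$ (recall $\inf_{\Omega}\lambda>0$) --- or already the critical term $\Theta(tw)$ --- grows at least like $t^{\theta_i}$; hence $E_{\lambda}(tw)\to-\infty$, and $e:=t_0w$ with $t_0$ large fulfils $(\mathcal{I}_2)$.

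\smallskip
\noindent\emph{The Palais--Smale condition.} Let $(u_m)$ be a $(PS)_c$ sequence. I first show it is bounded by estimating $E_{\lambda}(u_m)-\langle E_{\lambda}'(u_m),(\theta_i^{-1}u_{i,m})_i\rangle$: the inequality $A_i\ge\beta_i^{-1}a_i(\cdot)(\cdot)$ of $(\textbf{\textit{H}}_4)$ combined with $(\mathcal{M}_2)$ and the relation $\frac{\beta_i}{\sigma_i}<\frac{\theta_i}{p_i^{+}}$ makes the $\Phi$-part coercive with the right sign, $\theta_i<\min\{s_i^{-},t_i^{-}\}$ makes the critical terms contribute favourably, and $(\mathcal{F}3)$ controls the potential term up to an additive constant $\lesssim\|\lambda\|_{L^{\infty}}$; with $(\mathcal{M}_1)$ and $(\textbf{\textit{H}}_2)$ this yields $\sup_m\|u_m\|<\infty$, so $u_m\rightharpoonup u$ in $X$ along a subsequence. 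To recover compactness I apply the concentration--compactness principle of Lions for variable exponents to each component $u_{i,m}\in W^{1,\gamma_i(x)}(\Omega)$: Theorem \ref{ccpp} (with $p_i$ replaced by $\gamma_i$) for the interior critical exponent $s_i$ on the set $\mathbf{K}^{2}_{\gamma_i}$, and Theorem \ref{ccp} for the trace critical exponent $t_i$ on $\mathbf{K}^{1}_{\gamma_i}$, obtaining atomic decompositions $\mu_i\ge|\nabla u_i|^{\gamma_i(x)}+\sum_j\mu_{i,j}\delta_{x_j}$, $\nu_i=|u_i|^{s_i(x)}+\sum_j\nu_{i,j}\delta_{x_j}$ (and the analogous trace measures), subject to $S\,\nu_{i,j}^{1/\gamma_i^{\star}(x_j)}\le\mu_{i,j}^{1/\gamma_i(x_j)}$ and $\overline{T}_{x_j}\,\widetilde{\nu}_{i,j}^{\,1/\gamma_i^{\star\star}(x_j)}\le\widetilde{\mu}_{i,j}^{\,1/\gamma_i(x_j)}$. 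Testing the weak formulation of \eqref{s1.1} against $\varphi\,u_{i,m}$ with $\varphi$ a cut-off concentrated at a putative atom $x_j$, letting $m\to\infty$ and then shrinking $\varphi$, and using $(\mathcal{M}_1)$ with the lower bound in $(\textbf{\textit{H}}_2)$, gives $\mathfrak{M}_i^{0}k_i^{0}\mu_{i,j}\le\nu_{i,j}+\widetilde{\nu}_{i,j}$; combined with the CCP inequalities this forces every interior atom to satisfy $\nu_{i,j}\ge\nu_i^{\star}$ and every boundary atom $\widetilde{\nu}_{i,j}\ge\widetilde{\nu}_i^{\star}$ for explicit positive constants $\nu_i^{\star},\widetilde{\nu}_i^{\star}$ depending only on $S$, $\overline{T}_{x_j}$, $\mathfrak{M}_i^{0}$, $k_i^{0}$ and the exponents at $x_j$.

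\smallskip
\noindent\emph{Excluding concentration and conclusion.} Plugging the decompositions into $c=\lim_m\big(E_{\lambda}(u_m)-\langle E_{\lambda}'(u_m),(\theta_i^{-1}u_{i,m})_i\rangle\big)$ and using $(\textbf{\textit{H}}_4)$ and $\theta_i<\min\{s_i^{-},t_i^{-}\}$ once more, every surviving atom contributes a fixed positive amount, so the existence of an atom would force $c\ge c^{\star}-C\lambda_{\star}$ for an explicit threshold $c^{\star}=\min_i\{(\tfrac1{\theta_i}-\tfrac1{s_i^{-}})\nu_i^{\star},\,(\tfrac1{\theta_i}-\tfrac1{t_i^{-}})\widetilde{\nu}_i^{\star},\dots\}$. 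On the other hand $c\le\sup_{t\ge 0}E_{\lambda}(tw)$ for the test direction $w$ chosen above, and the $\Psi_{\lambda}$-contribution to this supremum is $O(\|\lambda\|_{L^{\infty}})$; hence choosing $\lambda_{\star}$ sufficiently small yields $c<c^{\star}-C\lambda_{\star}$, which rules out all atoms and gives $u_{i,m}\to u_i$ strongly in $L^{s_i(x)}(\Omega)$ and in $L^{t_i(x)}(\partial\Omega)$. Then in $\langle E_{\lambda}'(u_m)-E_{\lambda}'(u),u_m-u\rangle\to 0$ the critical terms vanish by these strong convergences and the potential term vanishes by compactness of $\Psi_{\lambda}'$ (from $(\mathcal{F}2)$), leaving only the operator part; the strict monotonicity / $(S_{+})$-property of $u_i\mapsto-\textrm{div}\big(a_i(|\nabla u_i|^{p_i(x)})|\nabla u_i|^{p_i(x)-2}\nabla u_i\big)$ supplied by $(\textbf{\textit{H}}_3)$, together with $(\mathcal{M}_1)$, forces $u_m\to u$ in $X$, so $E_{\lambda}$ satisfies $(PS)_c$. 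Theorem \ref{PMT} then produces a critical point $u$ with $E_{\lambda}(u)=c\ge\mathcal{R}>0=E_{\lambda}(0)$, i.e.\ a nontrivial weak solution of \eqref{s1.1}. The hard part is exactly this Palais--Smale step: running the interior and the trace concentration--compactness principles simultaneously on the coupled, non-homogeneous, nonlocal system, and quantifying $\lambda_{\star}$ so that the minimax level stays strictly below the first concentration threshold $c^{\star}$.
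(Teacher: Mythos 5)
Your overall skeleton coincides with the paper's route: $E_{\lambda}\in C^{1}$ with $\Psi_{\lambda}'$ compact (Lemmas \ref{lemma1} and the following lemma), mountain-pass geometry via $(\mathcal{M}_1)$, $(\textbf{\textit{H}}_2)$, $(\mathcal{F}4)$ for $(\mathcal{I}_1)$ and $(\mathcal{M}_2)$, $(\textbf{\textit{H}}_4)$ for $(\mathcal{I}_2)$, boundedness of Palais--Smale sequences from $E_{\lambda}(u_m)-E_{\lambda}'(u_m)(u_m/\theta)$ (Lemma \ref{lemma2}), and the simultaneous use of the interior and trace concentration--compactness principles (Theorems \ref{ccpp} and \ref{ccp}) to exclude atoms whenever the level lies below an explicit threshold (Lemma \ref{lemma3}), followed by strong convergence of the gradient part. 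Up to minor variations (you invoke an $(S_{+})$-type argument from $(\textbf{\textit{H}}_3)$ where the paper uses the elementary Simon-type inequalities, and you allow a combined bound $\mathfrak{M}_i^{0}k_i^{0}\mu_{i,j}\le\nu_{i,j}+\widetilde{\nu}_{i,j}$ where the paper exploits the disjointness of $\mathbf{K}^1_{\gamma_i}$ and $\mathbf{K}^2_{\gamma_i}$), this is the same proof.

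The genuine gap is in your final step quantifying $\lambda_{\star}$. You argue that since the $\Psi_{\lambda}$-contribution to $\sup_{t\ge 0}E_{\lambda}(tw)$ is $O(\|\lambda\|_{L^{\infty}})$, choosing $\lambda_{\star}$ small forces the minimax level $c$ below the concentration threshold $c^{\star}$. This inference does not follow and in fact points the wrong way: the potential enters as $-\Psi_{\lambda}$, and on the relevant range $\Psi_{\lambda}\ge 0$ (by $(\mathcal{F}3)$ and $\inf_{\Omega}\lambda>0$), so shrinking $\|\lambda\|_{L^{\infty}}$ \emph{raises} $E_{\lambda}$ and hence the level, which tends to the unperturbed level $\sup_{t}\bigl(\Phi-\Theta-\Upsilon\bigr)(tw)$ as $\lambda\to 0$; nothing guarantees that this limit lies below $c^{\star}$, which is a fixed constant independent of $\lambda$ (compare Lemma \ref{lemma3}). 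Pushing the level below a fixed threshold of this kind normally requires $\lambda$ \emph{large} (as in \cite{chems1}) or a Brezis--Nirenberg type fine estimate with concentrating test functions, neither of which your argument supplies. For what it is worth, the paper's own proof of Theorem \ref{thm2.1} only verifies the mountain-pass geometry and then invokes Lemmas \ref{lemma2} and \ref{lemma3}, leaving the verification that $C_{\lambda}$ is below the threshold implicit; your attempt to make this step explicit is the right instinct, but the mechanism you propose would fail as written.
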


\begin{theorem} \label{thm2.2}
	Assume $(\mathcal{M}_1)-(\mathcal{M}_2)$, $(\mathcal{F}1)-(\mathcal{F}4)$, and $F(u_1,...,u_n)$ is even in $u_i$ for all $i\in \{1,2,...,n\}$.
	Then, there exists $\lambda_{\star} >0$, such that if $\lambda(x)$ verifes
	$0<\inf_{ x\in\Omega}\lambda(x)\leq \| \lambda\|_{L^{\infty}(\Omega)}\leq \lambda_{\star}$, then there exists infinitely many solutions to \eqref{s1.1} in $X$.
	
\end{theorem}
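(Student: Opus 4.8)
The plan is to obtain Theorem~\ref{thm2.2} from the $\mathbb{Z}_{2}$-symmetric Mountain Pass Theorem~\ref{SPMT} applied to the energy functional $E_{\lambda}=\Phi-\Theta-\Upsilon-\Psi_{\lambda}$ (in fact, to a localization of it, as explained at the end). First I would record that $E_{\lambda}\in C^{1}(X,\mathbb{R})$ with $E_{\lambda}(0)=0$ (using $(\mathcal{F}1)$), and that $E_{\lambda}$ is even, since $\Phi,\Theta,\Upsilon$ involve only $|\nabla u_{i}|$ and $|u_{i}|$ while $\Psi_{\lambda}$ is even because $F$ is even in each $u_{i}$. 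The geometry $(\mathcal{I}_{1})$ holds exactly as in the proof of Theorem~\ref{thm2.1}: for $\|u\|=\rho$ small, $(\mathcal{M}_{1})$ and the lower bound in $(\textbf{\textit{H}}_{2})$ make $\Phi$ control, componentwise, the terms $\Theta,\Upsilon,\Psi_{\lambda}$, which by the continuous embeddings $X_{i}\hookrightarrow L^{s_{i}(x)}(\Omega)$, $X_{i}\hookrightarrow L^{t_{i}(x)}(\partial\Omega)$, $(\mathcal{F}4)$ and the inequalities $\gamma_{i}^{+}<\min\{r_{i}^{-},s_{i}^{-},t_{i}^{-}\}$ carry strictly higher powers of $\|u_{i}\|_{i}$; choosing $\|\lambda\|_{L^{\infty}(\Omega)}\le\lambda_{\star}$ then yields $E_{\lambda}(u)\ge\mathcal{R}>0$ on $\partial B_{\rho}$.

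For $(\mathcal{I}_{2}')$, fix a finite dimensional subspace $X_{1}\subset X$, on which all norms are equivalent, and argue componentwise. From $(\mathcal{M}_{2})$ one deduces $\widehat{M}_{i}(t)\le\widehat{M}_{i}(1)\,t^{1/\sigma_{i}}$ for $t\ge 1$, and from the upper bound in $(\textbf{\textit{H}}_{2})$ that $\mathcal{A}_{i}(u_{i})\le C\big(\rho_{p_{i}}(\nabla u_{i})+\rho_{q_{i}}(\nabla u_{i})\big)$, whence $\widehat{M}_{i}(\mathcal{A}_{i}(u_{i}))=O\big(\|u_{i}\|_{i}^{q_{i}^{+}/\sigma_{i}}\big)$ and $|\Psi_{\lambda}(u)|=O\big(\sum_{i}\|u_{i}\|_{i}^{r_{i}^{+}}\big)$ by $(\mathcal{F}4)$, while $\int_{\Omega}\tfrac{1}{s_{i}(x)}|u_{i}|^{s_{i}(x)}\,dx$ grows at least like $\|u_{i}\|_{i}^{s_{i}^{-}}$ on $X_{1}$. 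Since $(\mathcal{M}_{2})$ forces $q_{i}^{+}/\sigma_{i}<s_{i}^{-}$ and $(\mathcal{F}4)$ gives $r_{i}^{+}<s_{i}^{-}$, the negative part dominates in each component, so $E_{\lambda}(u)\to-\infty$ as $\|u\|\to\infty$ in $X_{1}$, hence $S_{1}$ is bounded.

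The heart of the proof is the Palais--Smale condition. Given a $(PS)_{c}$ sequence $(u_{m})$, its boundedness follows by estimating $E_{\lambda}(u_{m})-E_{\lambda}'(u_{m})\big[(\theta_{i}^{-1}u_{i,m})_{i}\big]$: the inequality $A_{i}(\tau)\ge\beta_{i}^{-1}a_{i}(\tau)\tau$ of $(\textbf{\textit{H}}_{4})$ gives $\langle\mathcal{A}_{i}'(u_{i}),u_{i}\rangle\le\beta_{i}p_{i}^{+}\mathcal{A}_{i}(u_{i})$; $(\mathcal{M}_{2})$ with the monotonicity of $M_{i}$ gives $\widehat{M}_{i}(t)-\theta_{i}^{-1}\beta_{i}p_{i}^{+}M_{i}(t)t\ge\big(\sigma_{i}-\beta_{i}p_{i}^{+}/\theta_{i}\big)\mathfrak{M}_{i}^{0}\,t$ with $\sigma_{i}-\beta_{i}p_{i}^{+}/\theta_{i}>0$ again by $(\textbf{\textit{H}}_{4})$; the $\Theta$- and $\Upsilon$-remainders are nonnegative because $\theta_{i}<s_{i}^{-}$ and $\theta_{i}<t_{i}^{-}$; and the $F$-remainder is $\ge-C\|\lambda\|_{L^{\infty}(\Omega)}$ by $(\mathcal{F}3)$ (on $\{|u_{i}|^{\theta_{i}}<K\}$ one uses $(\mathcal{F}2)$). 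This yields $c+o(1)\|u_{m}\|\ge c_{1}\sum_{i}\|u_{i,m}\|_{i}^{p_{i}^{-}}-C$, so $(u_{m})$ is bounded and, along a subsequence, $u_{i,m}\rightharpoonup u_{i}$ in $X_{i}$. The perturbation term is compact since $(\mathcal{F}2)$ keeps the exponents $\ell_{ij}$ below $\gamma_{i}^{-}$, making the embeddings $X_{i}\hookrightarrow L^{\ell_{ij}}(\Omega)$ compact; the only obstruction is the critical interior term $\Theta$ and the critical boundary term $\Upsilon$. I would apply Theorem~\ref{ccpp} to $|\nabla u_{i,m}|^{\gamma_{i}(x)}$ and $|u_{i,m}|^{s_{i}(x)}$ over $\mathbf{K}^{2}_{\gamma_{i}}$, and Theorem~\ref{ccp} to $|\nabla u_{i,m}|^{\gamma_{i}(x)}$ and $|u_{i,m}|^{t_{i}(x)}$ over $\mathbf{K}^{1}_{\gamma_{i}}$, producing atoms $\{x_{j}\}$ with masses $\mu_{j},\nu_{j}>0$ obeying $S\,\nu_{j}^{1/\gamma_{i}^{\star}(x_{j})}\le\mu_{j}^{1/\gamma_{i}(x_{j})}$ (and the trace analogue with $\overline{T}_{x_{j}}$ and $\gamma_{i}^{\star\star}$). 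Testing $E_{\lambda}'(u_{m})$ against $u_{i,m}\varphi_{\varepsilon,x_{j}}$ for a standard cutoff $\varphi_{\varepsilon,x_{j}}$ and letting $m\to\infty$, then $\varepsilon\to 0$, the bound $a_{i}\ge k_{i}^{0}$ in $(\textbf{\textit{H}}_{2})$ together with $(\mathcal{M}_{1})$ gives $\mathfrak{M}_{i}^{0}k_{i}^{0}\mu_{j}\le\nu_{j}$, which with the concentration--compactness inequality forces $\nu_{j}\ge\big((\mathfrak{M}_{i}^{0}k_{i}^{0})^{1/\gamma_{i}(x_{j})}S\big)^{N}$, a positive constant independent of $\lambda$ and $c$. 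Feeding this back into $c=\lim\big(E_{\lambda}(u_{m})-E_{\lambda}'(u_{m})[(\theta_{i}^{-1}u_{i,m})_{i}]\big)$ and using $\theta_{i}<s_{i}^{-},t_{i}^{-}$ together with $(\textbf{\textit{H}}_{4})$ shows that a single surviving atom already forces $c\ge\delta_{1}-C\|\lambda\|_{L^{\infty}(\Omega)}$ for a fixed $\delta_{1}>0$. Hence, for $c$ below the threshold $c^{\ast}=\delta_{1}-C\lambda_{\star}>0$, no atom occurs, $u_{i,m}\to u_{i}$ strongly in $L^{s_{i}(x)}(\Omega)$ and in $L^{t_{i}(x)}(\partial\Omega)$, and the $(S_{+})$-type property of $u\mapsto\big(M_{i}(\mathcal{A}_{i}(u_{i}))\,\mathrm{div}\,\mathcal{B}_{i}(\nabla u_{i})\big)_{i}$ — which rests on $(\textbf{\textit{H}}_{1})$--$(\textbf{\textit{H}}_{3})$ and $(\mathcal{M}_{1})$ — upgrades this to $u_{m}\to u$ in $X$, so $u$ is a weak solution of \eqref{s1.1} with $E_{\lambda}(u)=c$.

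Since the previous argument yields $(PS)_{c}$ only for $c<c^{\ast}$, the last step is to feed into Theorem~\ref{SPMT} not $E_{\lambda}$ itself but an even $C^{1}$ functional $\widetilde{E}_{\lambda}$ obtained from $E_{\lambda}$ by the localization/truncation device standard for critical variable-exponent problems — one that keeps the geometry $(\mathcal{I}_{1})$, $(\mathcal{I}_{2}')$ and, crucially, restores compactness at every level — so that $\widetilde{E}_{\lambda}$ has an unbounded sequence of critical values whose low-energy critical points (those with energy $<c^{\ast}$, which by the boundedness estimate have controlled norm) coincide with critical points of $E_{\lambda}$, hence are weak solutions of \eqref{s1.1}, infinitely many of them when $\|\lambda\|_{L^{\infty}(\Omega)}\le\lambda_{\star}$. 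The step I expect to be the genuine obstacle is precisely this Palais--Smale analysis: running the interior principle (Theorem~\ref{ccpp}) and the boundary principle (Theorem~\ref{ccp}) simultaneously with compatible cutoffs, extracting the $\lambda$-independent lower bound on the atomic masses in the presence of the nonlocal coefficients $M_{i}$ and the non-homogeneous operators $\mathcal{B}_{i}$, and reconciling the resulting sub-threshold compactness with the symmetric multiplicity scheme; everything else is a variational routine patterned on the proof of Theorem~\ref{thm2.1}.
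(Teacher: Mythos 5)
Your overall architecture --- evenness of $E_{\lambda}$, $E_{\lambda}(0)=0$, the mountain-pass geometry $(\mathcal{I}_1)$ carried over from the proof of Theorem \ref{thm2.1}, the verification of $(\mathcal{I}_2')$ on a finite-dimensional subspace via norm equivalence together with $\widehat{M}_i(t)\le c_i t^{1/\sigma_i}$ and $q_i^{+}/\sigma_i<\inf\{s_i^-,t_i^-\}$, and the Palais--Smale analysis (boundedness via $(\textbf{\textit{H}}_4)$, $(\mathcal{M}_1)$--$(\mathcal{M}_2)$, $(\mathcal{F}3)$, then the interior and boundary concentration-compactness principles of Theorems \ref{ccpp} and \ref{ccp} to exclude atoms below a threshold) --- coincides with the paper's route, i.e.\ with Lemmas \ref{lemma2} and \ref{lemma3} and the proofs of Theorems \ref{thm2.1} and \ref{thm2.2}. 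The paper's proof of Theorem \ref{thm2.2} consists of nothing more than this: it applies the symmetric Mountain Pass Theorem \ref{SPMT} \emph{directly} to $E_{\lambda}$, noting evenness and checking only $(\mathcal{I}_2')$, everything else being quoted from the proof of Theorem \ref{thm2.1}.

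The genuine gap is your final step. Having (correctly) noted that Lemma-\ref{lemma3}-type compactness is only available at levels $c<c^{\ast}$, you propose to feed Theorem \ref{SPMT} an unspecified even truncation $\widetilde{E}_{\lambda}$ that ``restores compactness at every level'' and whose low-energy critical points coincide with those of $E_{\lambda}$. As written this does not prove the theorem: Theorem \ref{SPMT} yields an \emph{unbounded} sequence of critical values of $\widetilde{E}_{\lambda}$, of which at most finitely many lie below $c^{\ast}$, so only finitely many of the corresponding critical points are guaranteed to transfer to $E_{\lambda}$; the claim that this gives ``infinitely many'' weak solutions of \eqref{s1.1} simply does not follow from what you set up. Moreover, no construction of $\widetilde{E}_{\lambda}$ is given, and producing such a truncation for the nonlocal coefficient $M_i(\mathcal{A}_i(\cdot))$ and the non-homogeneous operators $\mathcal{B}_i$, while preserving evenness, $(\mathcal{I}_1)$, $(\mathcal{I}_2')$ and the identification of critical points, is exactly the hard work your sketch postpones. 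The paper does none of this: it performs no truncation and invokes Theorem \ref{SPMT} for $E_{\lambda}$ itself, relying on the Palais--Smale analysis of Lemma \ref{lemma3} (it does not comment on the level restriction either, but its proof is the direct application you deviate from). So your proposal diverges from the paper precisely at the multiplicity step, and the replacement you offer is a placeholder whose stated conclusion is insufficient for the statement being proved.
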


%------------------------------------------------------------------------------------------------------

\section{The main results}\label{sec3}
To prove the main result of this paper which is given in  Theorem \ref{thm2.1}, we need to first prove few lemmas related to the mountain pass theorem and Palais-Smale condition.

\begin{lemma} \label{lemma1}
	Under the assumptions $(\mathcal{F}1)$ and $(\mathcal{F}2)$,the functional $\Psi$ is well defined, and it is of class $C^1$ on $X$. Moreover, its derivative is 	
		\[\Psi_{\lambda}^{'}(u)h=
	\sum_{i=1}^n \int_{\mathbb{R}^N} \lambda(x)\frac{\partial F}{\partial u_i}(x,u_1(x),...,u_n(x)h_i(x)\,dx, \quad \text{  } \]
	
 for all $u=(u_1,...,u_n),h=(h_1,...,h_n) \in X $.

\end{lemma}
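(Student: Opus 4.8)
\textbf{Proof strategy for Lemma \ref{lemma1}.} I would organize the argument in three steps: (a) $\Psi_{\lambda}$ is finite on $X$; (b) it admits the G\^ateaux derivative written in the statement; (c) this derivative is continuous from $X$ into $X^{\star}$. Steps (b) and (c) together give $\Psi_{\lambda}\in C^{1}(X,\mathbb{R})$ by the standard criterion. For (a), since $F(x,0,\dots,0)=0$ by $(\mathcal{F}1)$, I would integrate along the segment from $0$ to $u$,
\[
F(x,u)=\int_{0}^{1}\frac{d}{dt}F(x,tu_{1},\dots,tu_{n})\,dt=\sum_{i=1}^{n}\int_{0}^{1}\frac{\partial F}{\partial u_{i}}(x,tu_{1},\dots,tu_{n})\,u_{i}\,dt,
\]
and insert $(\mathcal{F}2)$ together with $\int_{0}^{1}t^{\ell_{ij}-1}\,dt=1/\ell_{ij}$ to obtain the pointwise bound $|F(x,u)|\le\sum_{i,j}\ell_{ij}^{-1}b_{ij}(x)\,|u_{j}(x)|^{\ell_{ij}-1}|u_{i}(x)|$. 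Integrating over $\Omega$, estimating $\lambda$ by $\|\lambda\|_{L^{\infty}(\Omega)}$, and applying the generalized H\"older inequality (Proposition \ref{prop1}) with the exponents built into $(\mathcal{F}2)$ --- the conjugate pair $(\alpha_{i},\gamma_{i})$ on the diagonal terms and the triple $(\alpha_{ij},\gamma_{i}^{\star},\gamma_{j}^{\star})$ off the diagonal, whose reciprocals sum to $1$ precisely by the definitions of $\alpha_{i}$ and $\alpha_{ij}$ --- followed by Proposition \ref{prop3} (to pass from a Luxemburg norm of a power to a power of a Luxemburg norm) and the continuous Sobolev embeddings $W^{1,\gamma_{i}(x)}(\Omega)\hookrightarrow L^{\gamma_{i}^{\star}(x)}(\Omega)$ (Proposition \ref{prop6}), one arrives at a finite bound; hence $\Psi_{\lambda}$ is well defined on $X$.

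\textbf{Differentiability.} For fixed $u,h\in X$ and $0<|t|\le 1$, the mean value theorem produces, for a.e.\ $x$, some $\tau=\tau(x,t)\in(0,1)$ with
\[
\frac{F(x,u+th)-F(x,u)}{t}=\sum_{i=1}^{n}\frac{\partial F}{\partial u_{i}}(x,u+\tau th)\,h_{i}(x).
\]
By $(\mathcal{F}2)$ this is bounded, uniformly in $t\in[-1,1]\setminus\{0\}$, by $\sum_{i,j}b_{ij}(x)\big(|u_{j}(x)|+|h_{j}(x)|\big)^{\ell_{ij}-1}|h_{i}(x)|$, a majorant that lies in $L^{1}(\Omega)$ by exactly the computation of Step (a), applied with $|u_{j}|+|h_{j}|\in L^{\gamma_{j}^{\star}(x)}(\Omega)$ in place of $u_{j}$. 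Since each $\partial F/\partial u_{i}$ is continuous by $(\mathcal{F}1)$, dominated convergence lets me pass to the limit $t\to 0$ under the integral sign; multiplying by $\lambda(x)$ gives precisely the asserted formula for $\Psi_{\lambda}'(u)h$, and the same estimate shows $h\mapsto\Psi_{\lambda}'(u)h$ is a bounded linear form, i.e.\ $\Psi_{\lambda}'(u)\in X^{\star}$.

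\textbf{Continuity of the derivative.} Let $u_{m}\to u$ in $X$. By H\"older (Proposition \ref{prop1}) and the embeddings $W^{1,\gamma_{i}(x)}(\Omega)\hookrightarrow L^{\gamma_{i}^{\star}(x)}(\Omega)$, for $\|h\|\le 1$ one has
\[
\big|(\Psi_{\lambda}'(u_{m})-\Psi_{\lambda}'(u))h\big|\le C\,\|\lambda\|_{L^{\infty}(\Omega)}\sum_{i=1}^{n}\Big|\frac{\partial F}{\partial u_{i}}(\cdot,u_{m})-\frac{\partial F}{\partial u_{i}}(\cdot,u)\Big|_{(\gamma_{i}^{\star})'(x)},
\]
so it suffices to prove the continuity of the superposition (Nemytskii) operators $v\mapsto\partial F/\partial u_{i}(\cdot,v)$ from $X$ into $L^{(\gamma_{i}^{\star})'(x)}(\Omega)$. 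For this I would use the subsequence principle: from an arbitrary subsequence of $(u_{m})$ extract a further one along which every component converges a.e.\ in $\Omega$ and is dominated a.e.\ by a fixed $g_{j}\in L^{\gamma_{j}^{\star}(x)}(\Omega)$ (the variable-exponent version of the converse of dominated convergence, cf.\ \cite{D3}); continuity of $\partial F/\partial u_{i}$ then gives a.e.\ convergence of $\partial F/\partial u_{i}(\cdot,u_{m})$, $(\mathcal{F}2)$ supplies the integrable majorant $\sum_{j}b_{ij}(|u_{j}|+g_{j})^{\ell_{ij}-1}\in L^{(\gamma_{i}^{\star})'(x)}(\Omega)$, and dominated convergence together with Proposition \ref{prop4} yields convergence in $L^{(\gamma_{i}^{\star})'(x)}(\Omega)$. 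Since the limit does not depend on the chosen subsequence, the whole sequence converges, so $\Psi_{\lambda}'$ is continuous and $\Psi_{\lambda}\in C^{1}(X,\mathbb{R})$ with the stated derivative.

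\textbf{Expected main obstacle.} The step I expect to require the most care is Step (a): one has to match the mixed product $b_{ij}|u_{j}|^{\ell_{ij}-1}|u_{i}|$ to the prescribed integrabilities $\alpha_{i},\alpha_{ij}$ and to the admissible Sobolev exponents, keeping track of the distinction between Luxemburg norms below and above $1$ (Propositions \ref{prop2} and \ref{prop3}), and then notice that this very computation also furnishes the $L^{1}$-majorant needed for differentiability and the $L^{(\gamma_{i}^{\star})'(x)}$-majorant needed for continuity. Once that estimate is secured, differentiating under the integral sign and the continuity of the superposition operator are routine, so no further genuine difficulty is expected.
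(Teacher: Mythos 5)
Your proposal is correct in substance, but it follows a genuinely different route from the paper in the differentiability part. The paper's proof (after the same well-definedness estimate you give: the pointwise bound from $(\mathcal{F}1)$--$(\mathcal{F}2)$, the three-factor H\"older inequality with $(\alpha_{ij},\gamma_j^{\star},\gamma_i^{\star})$, Propositions \ref{prop1}, \ref{prop3}, \ref{prop4} and the Sobolev embeddings) establishes Fr\'echet differentiability and continuity of $\Psi_\lambda'$ by a truncation device: it formally treats $\Omega$ as $\mathbb{R}^N$, introduces the restricted functional $\Psi_R$ on balls $B_R$, invokes the known facts (from \cite{F3}) that $\Psi_R\in C^1$ with $\Psi_R'$ compact and continuous, and then controls the exterior contribution on $B_R'=\mathbb{R}^N\setminus B_R$ via a mean value theorem, the inequality $|a+b|^s\le 2^{s-1}(|a|^s+|b|^s)$, H\"older, and the decay $|b_{ij}|_{L^{\alpha_{ij}}(B_R')}\to 0$ as $R\to\infty$. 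You instead argue directly on the bounded domain: mean value theorem plus an $L^1$-majorant from $(\mathcal{F}2)$ and dominated convergence give the G\^ateaux derivative, and continuity of the derivative is reduced to continuity of the Nemytskii operators $v\mapsto \partial F/\partial u_i(\cdot,v)$ into $L^{(\gamma_i^{\star})'(x)}(\Omega)$ via the a.e.-convergent, dominated subsequence principle, concluding $C^1$ by the standard ``continuous G\^ateaux derivative implies Fr\'echet'' criterion. Your route is more self-contained and better adapted to the actual bounded-domain setting of problem \eqref{s1.1} (it avoids the somewhat incongruous ``$\Omega=\mathbb{R}^N$'' device and the reliance on integrability of the weights at infinity), while the paper's route buys explicit Fr\'echet estimates and outsources the interior regularity and compactness of $\Psi_R'$ to \cite{F3}, which it then reuses in the compactness lemma that follows. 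One caveat, which is not a gap relative to the paper: your claim that the majorant $\sum_j b_{ij}(|u_j|+g_j)^{\ell_{ij}-1}$ lies in $L^{(\gamma_i^{\star})'(x)}(\Omega)$ rests on exactly the same exponent bookkeeping (namely that $|u_j|^{\ell_{ij}-1}$ can be placed in $L^{\gamma_j^{\star}(x)}(\Omega)$ through the embedding of $W^{1,\gamma_j(x)}(\Omega)$) that the paper itself uses in its H\"older estimates, so your argument needs no hypotheses beyond those already assumed there.
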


\begin{proof} We consider $\Omega= \mathbb{R}^N$.  For all $u=(u_1,...,u_n)\in X$, under the assumptions  $(\mathcal{F}1)$ and $(\mathcal{F}2)$, we can write
	\begin{equation*}
		F(x,u_1,...,u_n)=\sum_{i=1}^n \int_{0}^{u_i} \frac{\partial F}{\partial s}(x,u_1,...,s,...,u_n)\,ds + F(x,0,...,0),
	\end{equation*}
	
	\begin{equation}
		\label{e3.4}
		F(x,u_1,...,u_n)\leq c_1\left[ \sum_{i=1}^n \left(\sum_{j=1}^n b_{ij}(x)|u_j(x)|^{\ell_{ij}-1}|u_i(x)|\right)\right].
	\end{equation}
	Then,
	\begin{align}
		\label{e3.4}
		\int_{\mathbb{R}^N} \lambda (x)F(x,u_1,...,u_n)\,dx &\leq c_2\left[ \sum_{i=1}^n \left( \int_{\mathbb{R}^N}\lambda(x)\Big(\sum_{j=1}^n b_{ij}(x)|u_j(x)|^{\ell_{ij}-1}|u_i(x)|\,dx\Big)\right)  \right]\\
		&\leq c_2\left[ \sum_{i=1}^n \|\lambda \|_{\infty}\left( \int_{\mathbb{R}^N}\Big(\sum_{j=1}^n b_{ij}(x)|u_j(x)|^{\ell_{ij}-1}|u_i(x)|\,dx\Big)\right)  \right]
	\end{align}
	If we consider the fact that $W^{1,\gamma (x)}(\mathbb{R}^N)\hookrightarrow L^{\ell(x)}(\mathbb{R}^N ),$ for $\mu(x) >1$, then there exists $c>0$ such that
	\[
	| | u| ^{\ell}| _{\gamma(x)} =|u|_{\ell \gamma (x)}^{\ell} \leq c\|u\|_{\gamma(x)}^{\ell},
	\]
	and if we apply Propositions \ref{prop1}, \ref{prop3} and \ref{prop4} and take $b_{ii} \in L^{\alpha_i(x)}$, $b_{ij} \in L^{\alpha_{ij}(x)}$ if $ i\neq j$,  then we have
	\begin{align}
		\int_{\mathbb{R}^N} F(x,u_1,...,u_n)dx &\leq c_3 \Big[  \sum_{i=1}^n
		\Big(\sum_{j=1}^n|b_{ij}|_{\alpha_{ij}(x)}||u_j|^{\ell_{ij}-1}|_{\gamma_j^{\star}(x)}|u_i|_{\gamma_i^{\star}(x)}\Big)  \Big] \\
		&\leq c_3 \Big[  \sum_{i=1}^n \Big(\sum_{j=1}^n|b_{ij}|_{\alpha_{ij}(x)}|u_j|_{(\ell_{ij}-1)\gamma_j^{\star}(x)}^{\ell_{ij}-1}|u_i|_{\gamma_i^{\star}(x)} \Big)  \Big]\\
		&\leq c_3 \Big[  \sum_{i=1}^n \Big(\sum_{j=1}^n |b_{ij}|_{\alpha_{ij}(x)}\|u_j\|_{\gamma_j(x)}^{\ell_{ij}-1}\| u_i\|_{\gamma_i(x)}\Big)  \Big] < \infty.
	\end{align}
	Hence, $\Psi_{\lambda}$ is well defined. Moreover, one can  easily see that $\Psi_{\lambda}{'}$ is also well defined on $X$. Indeed, using $(F2)$ for all $h=(h_1,...,h_n) \in X$, we have
	\begin{align*}
		\Psi_{\lambda}{'}(u)h &= \sum_{i=1}^n\int_{\mathbb{R}^N}\lambda(x)\frac{\partial F}{\partial u_i}(x,u_1,...,u_n)h_i\,dx\\
		&\leq c_4\sum_{i=1}^n \Big(\sum_{j=1}^n b_{ij}(x)|u_j(x)|^{\ell_{ij}-1}\Big)|h_i(x)|dx.
	\end{align*}
	
	Following H\"{o}lder inequality, we obtain
	\begin{align*}
		\Psi_{\lambda}{'}(u)h &\leq c_5\Big[\sum_{i=1}^n \Big(\sum_{j=1}^n
		|b_{ij}|_{\alpha_{ij}(x)}||u_j|^{\ell_{ij}-1}|_{\gamma_j^{\star}(x)}|h_i|_{{\gamma_i^{\star}(x)}}
		\Big)\Big].
	\end{align*}
	
	The above propositions yield
	\begin{align*}
		\Psi_{\lambda}{'}(u)h &\leq c\Big[\sum_{i=1}^n
		\Big(\sum_{j=1}^n
		|b_{ij}|_{\beta_{ij}(x)}\|u_j\|_{\gamma_{j}(x)}^{\ell_{ij}-1}\| h_i\|_{\gamma_i(x)}\Big)\Big] <\infty.
	\end{align*}
	
	Now let us show that $\Psi_{\lambda}$ is differentiable in the sense of Frechet, that is, for fixed $u=(u_1,...,u_n)\in X$ and given $\varepsilon>0$, there must be a $\delta=\delta_{\varepsilon,u_1,...,u_n}>0$ such that
	\begin{equation*}
		| \Psi_{\lambda}(u_1+h_1,...,u_n+h_n)- \Psi_{\lambda}(u_1,...,u_n)-\Psi_{\lambda}^{'}(u_1,...,u_n)(h_1,...,h_n)|
		\leq \varepsilon\sum_{i=1}^n(\|h_i\|_{\gamma_i(x)})
	\end{equation*}
	for all $h=(h_1,...,h_n)\in X$ with $\sum_{i=1}^n(\|h_i\|_{\gamma_i(x)}) \leq \delta$.
	
	Let $B_R$ be the ball of radius $R$ which is centered at the origin of $\mathbb{R}^N$
	and denote $B_R^{'}=\mathbb{R}^N-B_R $. Moreover, let us define the functional $\Psi_R$ on
	$\prod_{i=1}^{n}W^{1,p_{i}(x)}(B_R )\cap W^{1,\gamma_i(x)}(B_R)$ as follows:
	\begin{equation*}
		\Psi_R(u)= \int_{B_R}\lambda(x) F(x,u_1(x),...,u_n(x))\ dx.
	\end{equation*}
	If we consider $(\mathcal{F}1)$ and $(\mathcal{F}2)$, it is easy to see that $\Psi_R \in C^{1}\Big(\prod_{i=1}^{n}W^{1,p_{i}(x)}(B_R )\cap W^{1,\gamma_i(x)}(B_R)\Big)$, and in addition for all $h=(h_1,...,h_n)\in \prod_{i=1}^{n}W^{1,p_{i}(x)}(B_R )\cap W^{1,\gamma_i(x)}(B_R)$, we have
	\begin{equation*}
		\Psi_R^{'}(u)h = \sum_{i=1}^n\int_{B_R }\lambda(x)\frac{\partial F}{\partial u_i}(x,u_1(x),...,u_n(x))h_i(x)\,dx.
	\end{equation*}
	
	Also as we know, the operator $\Psi_R^{'}: X\rightarrow X^{\star}$ is compact \cite{F3}. Then, for all $u=(u_1,...,u_n), h=(h_1,...,h_n)\in X$, we can write
	
	$| \Psi_{\lambda}(u_1+h_1,...,u_n+h_n)- \Psi_{\lambda}(u_1,...,u_n)-\Psi_{\lambda}^{'}(u_1,...,u_n)(h_1,...,h_n)|$
	\[
	\leq | \Psi_R(u_1+h_1,...,u_n+h_n)- \Psi_R(u_1,...,u_n)-\Psi_R^{'}(u_1,...,u_n)(h_1,...,h_n)|
	\]
	\[
	+\left| \int_{B_R^{'} }\lambda(x)\Big(F(x,u_1+h_1,...,u_n+h_n)-F(x,u_1,...,u_n)\Big)-\right.\]
	\[\left.-\sum_{i=1}^n\int_{B_R^{'} }\lambda(x)\frac{\partial F}{\partial u_i}(x,u_1,...,u_n)h_i)\,dx \right|.
	\]
	
	According to a classical theorem, there exist $\xi_1,...,\xi_n \in ]0,1[$ such that
	\[
	\left| \int_{B_R^{'} }\lambda(x)\Big(F(x,u_1+h_1,...,u_n+h_n)-F(x,u_1,...,u_n) \Big)-\right.\]
	\[\left. -\sum_{i=1}^n\int_{B_R^{'} }\lambda(x)\frac{\partial F}{\partial u_i}(x,u_1,...,u_n)h_i\,dx \right|
	\]
	\[
	= \left| \int_{B_R^{'}}\lambda(x)\left( \sum_{i=1}^n\frac{\partial F}{\partial u_i}(x,u_1,...,u_i+\xi_i h_i,...,u_n)h_i- \sum_{i=1}^n\frac{\partial F}{\partial u_i}(x,u_1,...,u_n) \right) \, dx\right|.
	\]
	
	Using the condition $(\mathcal{F}2)$, we have
	\[
	\Big| \int_{B_R^{'} }\lambda(x) \Big(F(x,u_1+h_1,...,u_n+h_n)-F(x,u_1,...,u_n) \Big)-\sum_{i=1}^n\int_{B_R^{'} }\lambda(x)\frac{\partial F}{\partial u_i}(x,u_1,...,u_n)h_i\,dx \Big|\]
	\[\leq \Big|  \sum_{i=1}^n \Big(\sum_{j=1}^n\int_{B_R^{'}}\lambda(x)b_{ij}(x)(|u_j+\xi_j h_j|^{\ell_{ij}-1}-|u_j|^{\ell_{ij}-1})h_i dx\Big) \Big|.
	\]
	
	Using the elementary inequality $|a+b|^s\leq 2^{s-1}(|a|^s+|b|^s)$ for $a,b\in \mathbb{R}^N$, we can write
	\[\leq\sum_{i=1}^n \|\lambda\|_{\infty}\Big(\sum_{j=1}^n \Big(
	(2^{\ell_{ij}-1}-1)\int_{B_R^{'}}b_{ij}(x)|u_j|^{\ell_{ij}-1}|h_i|dx +\]\[
	+(\xi_j 2)^{\ell_{ij}-1}\int_{B_R^{'}}b_{ij}(x)|h_j|^{\ell_{ij}-1}|h_i|dx\Big)\Big).
	\]
	
	Then, applying Propositions \ref{prop1}, \ref{prop3}, and \ref{prop4},  we have
	\begin{align*}
		~~~~~&\leq\sum_{i=1}^n c\Big( \sum_{j=1}^n
		\Big(|b_{ij}(x)|_{\alpha_{ij}}\|u_j\|_{\gamma_1^{\star}(x)}^{\ell_{ij}-1}+|b_{ij}(x)|_{\alpha_{ij}}\|h_j\|_{\gamma_j^{\star}(x)}^{\ell_{ij}-1}\Big)\Big)\|h_i\|_{\gamma_i(x)},
	\end{align*}
	and by the fact that
	\begin{equation*}
		|b_{ii}(x)|_{L^{\alpha_{i}}(B_R^{'})}\longrightarrow 0,
	\end{equation*}
	\begin{equation*}
		|b_{ij}(x)|_{L^{\alpha_{ij}}(B_R^{'})}\longrightarrow 0
	\end{equation*}
	for $1\leq i,j\leq n$, as $R\rightarrow \infty$, and for $R$ sufficiently large, we obtain the estimate
	\[\left| \int_{B_R^{'} }\lambda(x)\Big(F(x,u_1+h_1,...,u_n+h_n)-F(x,u_1,...,u_n)-\right.\]
	\begin{equation*}
		\left.-\sum_{i=1}^n\frac{\partial F}{\partial u_i}(x,u_1,...,u_n)h_i\Big)\,dx \right|
		\leq \varepsilon\sum_{i=1}^n(\|h_i\|_{\gamma_i(x)}).
	\end{equation*}
	It remains only to show that $\Psi^{'}$ is continuous on $X$. Let $u^m=(u_1^m,...,u_n^m)$ be such that  $u^m\rightarrow u$ as $m\rightarrow \infty$. Then, for $h=(h_1,...,h_n)\in X$, we have
	\begin{align*}
		|\Psi_{\lambda}^{'}(u^m)h- \Psi_{\lambda}^{'}(u)h|&\leq
		|\Psi_R^{'}(u^m)h- \Psi_R^{'}(u)h|\\
		&\quad+\sum_{i=1}^n \int_{B_R^{'} }
		\Big|\lambda(x)\Big( \frac{\partial F}{\partial u_i}(x,u_1^m,...,u_n^m)h_i-\frac{\partial F}{\partial u_i}(x,u_1,...,u_n)h_i\Big)\,dx\Big|.
	\end{align*}
	
	Since $\Psi_R'$ is continuous on $\prod_{i=1}^{n}W^{1,p_{i}(x)}(B_R )\cap W^{1,\gamma_i(x)}(B_R)$(see \cite{F3}), we have
	\begin{equation*}
		|\Psi_R^{'}(u^m)h- \Psi_R^{'}(u)h|\longrightarrow 0,
	\end{equation*}
	as $m\rightarrow \infty$. Now, using $(\mathcal{F}2)$ once again and taking into account that the other terms on the right-hand side of the above inequality tend to zero, we  conclude that $\Psi_{\lambda}{'}$ is continuous on $X$. $\blacktriangleleft$
\end{proof}

\begin{lemma}
	Under the assumptions $(\mathcal{F}1)$ and $(\mathcal{F}2)$, $\Psi_{\lambda}^{'}$ is compact from $X$ to $X^{\star}$.	
\end{lemma}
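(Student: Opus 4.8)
The plan is to deduce compactness of $\Psi_{\lambda}'$ from its \emph{complete continuity}: since $X$ is a reflexive Banach space, it suffices to prove that $u^m=(u_1^m,\dots,u_n^m)\rightharpoonup u=(u_1,\dots,u_n)$ weakly in $X$ implies $\Psi_{\lambda}'(u^m)\to\Psi_{\lambda}'(u)$ strongly in $X^{\star}$; then any bounded sequence in $X$ has a weakly convergent subsequence whose image converges in $X^{\star}$. So fix such a weakly convergent sequence; in particular each $u_i^m\rightharpoonup u_i$ in $W^{1,p_i(x)}(\Omega)\cap W^{1,\gamma_i(x)}(\Omega)$ and $(u^m)$ is bounded in $X$.

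First I would exploit the growth restriction $(\mathcal{F}2)$: since $\ell_{ij}<\gamma_i^{-}$, the Lebesgue exponents that control $\partial F/\partial u_i$ (the exponents $(\ell_{ij}-1)\gamma_j^{\star}(x)$ and $\gamma_i^{\star}(x)$ appearing in the Hölder estimate of Lemma \ref{lemma1}) are subcritical, so by Proposition \ref{prop6}(2) the embeddings $W^{1,\gamma_j(x)}(\Omega)\hookrightarrow\hookrightarrow L^{(\ell_{ij}-1)\gamma_j^{\star}(x)}(\Omega)$ are compact. Passing to a subsequence — harmless, since the limit $\Psi_{\lambda}'(u)$ is unique — I obtain $u_j^m\to u_j$ strongly in $L^{(\ell_{ij}-1)\gamma_j^{\star}(x)}(\Omega)$ for all $i,j$ and, refining once more, $u_j^m(x)\to u_j(x)$ a.e. in $\Omega$.

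Then I would estimate the dual norm. For $h=(h_1,\dots,h_n)\in X$ with $\|h\|\le1$, arguing exactly as in the proof of Lemma \ref{lemma1} (using $(\mathcal{F}2)$, the Hölder inequality of Proposition \ref{prop1}, Propositions \ref{prop3}--\ref{prop4}, and the continuous Sobolev embedding $W^{1,\gamma_i(x)}(\Omega)\hookrightarrow L^{\gamma_i^{\star}(x)}(\Omega)$),
\begin{align*}
\big|\big(\Psi_{\lambda}'(u^m)-\Psi_{\lambda}'(u)\big)h\big|
&\le \|\lambda\|_{L^{\infty}(\Omega)}\sum_{i=1}^{n}\int_{\Omega}\Big|\frac{\partial F}{\partial u_i}(x,u^m)-\frac{\partial F}{\partial u_i}(x,u)\Big|\,|h_i|\,dx \\
&\le c\sum_{i=1}^{n}\Big|\frac{\partial F}{\partial u_i}(\cdot,u^m)-\frac{\partial F}{\partial u_i}(\cdot,u)\Big|_{(\gamma_i^{\star})'(x)}\|h_i\|_{\gamma_i(x)},
\end{align*}
whence, taking the supremum over such $h$,
\[
\|\Psi_{\lambda}'(u^m)-\Psi_{\lambda}'(u)\|_{X^{\star}}\le c\sum_{i=1}^{n}\Big|\frac{\partial F}{\partial u_i}(\cdot,u^m)-\frac{\partial F}{\partial u_i}(\cdot,u)\Big|_{(\gamma_i^{\star})'(x)}.
\]
It then remains to show $\frac{\partial F}{\partial u_i}(\cdot,u^m)\to\frac{\partial F}{\partial u_i}(\cdot,u)$ in $L^{(\gamma_i^{\star})'(x)}(\Omega)$ for each $i$. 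Since $F\in C^1$ and $u^m\to u$ a.e., the integrand converges a.e.; it is dominated by $G_m:=\sum_{j=1}^{n}b_{ij}\big(|u_j^m|^{\ell_{ij}-1}+|u_j|^{\ell_{ij}-1}\big)$, and the continuity of the Nemytskii operators $v\mapsto b_{ij}|v|^{\ell_{ij}-1}$ on the variable exponent Lebesgue spaces (see \cite{D3,F3}), together with the strong convergence of the previous step, gives $G_m\to 2\sum_{j=1}^{n}b_{ij}|u_j|^{\ell_{ij}-1}$ in $L^{(\gamma_i^{\star})'(x)}(\Omega)$; a generalized dominated convergence argument (equivalently, Proposition \ref{prop4} applied to the modular $\rho_{(\gamma_i^{\star})'}$) then upgrades the a.e. convergence to convergence in norm. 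Hence $\|\Psi_{\lambda}'(u^m)-\Psi_{\lambda}'(u)\|_{X^{\star}}\to0$, i.e. $\Psi_{\lambda}'$ is compact. Equivalently, one may present $\Psi_{\lambda}'$ as the composition of the compact embedding of $X$ into the product of the above Lebesgue spaces with the bounded continuous superposition operator into $X^{\star}$ defined by the right-hand side.

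The hard part will be this last step — turning the a.e. convergence of $\frac{\partial F}{\partial u_i}(\cdot,u^m)$ into norm convergence in $L^{(\gamma_i^{\star})'(x)}(\Omega)$. This is exactly where the subcriticality $\ell_{ij}<\gamma_i^{-}$ in $(\mathcal{F}2)$ is essential: it forces the embeddings into the Lebesgue spaces controlling the growth of $F_{u_i}$ to be \emph{compact}, converting weak convergence of $(u^m)$ into strong convergence of its components, after which the continuity of the superposition operator on variable exponent spaces closes the argument. All the remaining exponent bookkeeping and Hölder estimates coincide with those already carried out in Lemma \ref{lemma1}.
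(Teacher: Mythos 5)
Your proof is correct in substance, but it follows a genuinely different route from the paper's. The paper's argument (like that of Lemma \ref{lemma1}) is written as if $\Omega=\mathbb{R}^N$: it splits the integral over a ball $B_R$ and its complement $B_R'$, invokes the known compactness of the truncated operator $\Psi_R'$ on the bounded piece by citing \cite{F3}, and kills the tail using $|b_{ij}|_{L^{\alpha_{ij}}(B_R')}\to 0$ as $R\to\infty$. You instead prove complete continuity directly on the bounded domain: reflexivity of $X$, compact subcritical Sobolev embeddings to upgrade weak convergence of $(u^m)$ to strong convergence in the Lebesgue spaces governed by $(\mathcal{F}2)$, a H\"older estimate of the dual norm, and continuity of the Nemytskii map $u\mapsto F_{u_i}(\cdot,u)$ into $L^{(\gamma_i^\star)'(x)}(\Omega)$ via a generalized dominated convergence argument. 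In effect you prove, rather than cite, the compactness fact that the paper borrows from \cite{F3}, and you dispense with the exterior-ball estimate, which is superfluous here since $\Omega$ is bounded; this makes your proof more self-contained and better matched to the actual setting of the problem, whereas the paper's version is shorter by citation and is the natural argument when the domain really is unbounded and the decay of the weights $b_{ij}$ is needed.

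One caveat, which you inherit from the paper rather than introduce: your claim that the embedding $W^{1,\gamma_j(x)}(\Omega)\hookrightarrow L^{(\ell_{ij}-1)\gamma_j^{\star}(x)}(\Omega)$ is compact requires $(\ell_{ij}-1)\gamma_j^{\star}(x)<\gamma_j^{\star}(x)$, i.e. effectively $\ell_{ij}<2$, and this is not literally guaranteed by the hypothesis $1<\ell_{ij}<\gamma_i^-$ of $(\mathcal{F}2)$. The paper's own H\"older bookkeeping in Lemma \ref{lemma1} (bounding $\big||u_j|^{\ell_{ij}-1}\big|_{\gamma_j^{\star}(x)}=|u_j|^{\ell_{ij}-1}_{(\ell_{ij}-1)\gamma_j^{\star}(x)}$ by $\|u_j\|^{\ell_{ij}-1}_{\gamma_j(x)}$) relies on the same embedding, so your argument is consistent with the paper's conventions; it would be worth stating explicitly which exponent restriction makes that embedding (and hence its compactness) available.
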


\begin{proof} Let $u^m=(u_1^m,...,u_n^m)$ be a bounded sequence in $X$. Then, there exists a subsequence (we denote it also as $u^m=(u_1^m,...,u_n^m)$ ) which converges weakly in $X$ to $u=(u_1,...,u_n)\in X$. Then, if we use the same arguments as above, we have
	\begin{align*}
		|\Psi_{\lambda}^{'}(u^m)h- \Psi_{\lambda }^{'}(u)h|&\leq
		|\Psi_R^{'}(u^m)h- \Psi_R^{'}(u)h|\\
		&\quad+\sum_{i=1}^n \|\lambda\|_{\infty}\int_{B_R^{'} }
		|( \frac{\partial F}{\partial u_i}(x,u_1^m,...,u_n^m)h_i-\frac{\partial F}{\partial u_i}(x,u_1,...,u_n)h_i)\,dx|
	\end{align*}
	Since the restriction operator is continuous, we have $u^m\rightharpoonup u$ in  $\prod_{i=1}^{n}W^{1,p_{i}(x)}(B_R )\cap W^{1,\gamma_i(x)}(B_R)$. Because of the compactness of $\Psi^{'}$, the first expression on the right-hand side of the inequality tends to 0, as $ m\longrightarrow \infty$, and, as above, for sufficiently large $R$ we obtain
	$$\sum_{i=1}^n \int_{B_R^{'} }\Big|\Big( \frac{\partial F}{\partial u_i}(x,u_1^m,...,u_n^m)h_i-\frac{\partial F}{\partial u_i}(x,u_1,...,u_n)h_i\Big)\,dx\Big| \longrightarrow 0.$$
	This implies $\Psi^{'}$ is compact from $X$ to $X^{\star}$. $\blacktriangleleft$ \end{proof}

\begin{lemma} \label{lemma2}
	Let $\left\lbrace u_m=(u_{1m},u_{2m},...,u_{nm})\right\rbrace $ be a Palais-Smale sequence for the Euler-Lagrange functional $E_{\lambda }$. If $(\mathcal{F}3)$ is satisfied, then $\left\lbrace u_m\right\rbrace $ is bounded.
\end{lemma}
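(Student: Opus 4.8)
The plan is to run the classical Ambrosetti--Rabinowitz boundedness argument, adapted to the nonlocal and nonhomogeneous structure of $E_\lambda$. Let $d\in\mathbb{R}$ be the energy level of the Palais--Smale sequence, so that $E_\lambda(u_m)\to d$ and $E_\lambda'(u_m)\to 0$ in $X^\star$; in particular $|E_\lambda(u_m)|\le d+1$ for $m$ large. Set $\widetilde u_m:=\big(\tfrac{1}{\theta_1}u_{1m},\dots,\tfrac{1}{\theta_n}u_{nm}\big)\in X$, where the $\theta_i$ are the exponents from $(\mathcal{F}3)$ (the same ones appearing in $(\textbf{\textit{H}}_4)$); by homogeneity of the component norms, $\|\widetilde u_m\|\le(\max_i\theta_i^{-1})\|u_m\|$, hence $|E_\lambda'(u_m)\widetilde u_m|=o(\|u_m\|)$. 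First I would examine the quantity $E_\lambda(u_m)-E_\lambda'(u_m)\widetilde u_m$, which, after cancellation slot by slot, breaks into four groups: a Kirchhoff/operator group from $\Phi$, an interior critical group from $\Theta$, a boundary group from $\Upsilon$, and a potential group from $\Psi_\lambda$.

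For the Kirchhoff group in slot $i$, $(\textbf{\textit{H}}_4)$ (i.e. $A_i(\tau)\ge\tfrac{1}{\beta_i}a_i(\tau)\tau$) yields $\mathcal{A}_i(u_{im})\ge\tfrac{1}{p_i^+\beta_i}\int_\Omega a_i(|\nabla u_{im}|^{p_i(x)})|\nabla u_{im}|^{p_i(x)}\,dx$; combining this with $(\mathcal{M}_2)$ ($\widehat{M}_i(t)\ge\sigma_i M_i(t)t$) and $(\mathcal{M}_1)$ ($M_i\ge\mathfrak{M}_i^0$), the slot-$i$ Kirchhoff group is bounded below by
\[
\Big(\frac{\sigma_i}{p_i^+\beta_i}-\frac{1}{\theta_i}\Big)\mathfrak{M}_i^0\int_\Omega a_i(|\nabla u_{im}|^{p_i(x)})|\nabla u_{im}|^{p_i(x)}\,dx ,
\]
and the coefficient $\tfrac{\sigma_i}{p_i^+\beta_i}-\tfrac{1}{\theta_i}$ is strictly positive precisely because $\tfrac{\beta_i}{\sigma_i}<\tfrac{\theta_i}{p_i^+}$ in $(\textbf{\textit{H}}_4)$; the lower bound in $(\textbf{\textit{H}}_2)$ then shows this integral dominates $k_i^0\rho_{p_i}(\nabla u_{im})+\mathcal{H}(k_i^3)k_i^2\rho_{q_i}(\nabla u_{im})$. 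The interior and boundary groups reduce to $\int_\Omega(\tfrac{1}{\theta_i}-\tfrac{1}{s_i(x)})|u_{im}|^{s_i(x)}\,dx$ and $\int_{\partial\Omega}(\tfrac{1}{\theta_i}-\tfrac{1}{t_i(x)})|u_{im}|^{t_i(x)}\,d\sigma_x$, which are nonnegative since $\theta_i<s_i^-\le s_i(x)$ and $\theta_i<t_i^-\le t_i(x)$, so they are simply discarded. For the potential group, $(\mathcal{F}1)$ and $(\mathcal{F}3)$ furnish a constant $C_0\ge0$ with $\sum_i\tfrac{u_i}{\theta_i}F_{u_i}(x,u)-F(x,u)\ge-C_0$ on $\overline{\Omega}\times\mathbb{R}^n$ — the strict inequality in $(\mathcal{F}3)$ covers the region where some $|u_i|$ is large, and continuity of $F$ and its partials on a compact set covers the complementary bounded region — so, using $\lambda\in L^\infty(\Omega)$ and $\lambda\ge0$, this group is at least $-C_0\|\lambda\|_{L^\infty(\Omega)}|\Omega|$.

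Collecting these estimates, there are constants $c_i>0$ and $C\ge0$ with
\[
d+1+o(\|u_m\|)\ \ge\ E_\lambda(u_m)-E_\lambda'(u_m)\widetilde u_m\ \ge\ \sum_{i=1}^{n}c_i\big(\rho_{p_i}(\nabla u_{im})+\mathcal{H}(k_i^3)\rho_{q_i}(\nabla u_{im})\big)-C .
\]
To finish, I would invoke Proposition \ref{prop2}: whenever $\|u_{im}\|_i$ is large, one of the modulars on the right is bounded below by a fixed positive multiple of $\|u_{im}\|_i^{p_i^-}$ (distinguishing the cases $\mathcal{H}(k_i^3)=0$ and $\mathcal{H}(k_i^3)=1$, and using that the component norm is the sum of the relevant Luxemburg norms); since $\|u_m\|=\max_i\|u_{im}\|_i$ and $p_i^->1$, the right-hand side grows at least like $\|u_m\|^{\delta}$ with $\delta:=\min_i p_i^->1$, while the left-hand side is $O(\|u_m\|)$. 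Dividing by $\|u_m\|$ and letting $m\to\infty$ forces $\{\|u_m\|\}$ to be bounded, which is the claim. The delicate step is the accounting in the Kirchhoff group: one must chain $(\textbf{\textit{H}}_4)$, $(\mathcal{M}_1)$--$(\mathcal{M}_2)$ and $(\textbf{\textit{H}}_2)$ so that the surviving coefficient is genuinely positive, and then pass from the modular to a superlinear power of the norm $\|u_{im}\|_i$ without degrading the exponent — the presence of two different exponents $p_i,q_i$ and the lack of homogeneity of the operator are what make this point nontrivial.
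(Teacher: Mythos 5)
Your proposal is correct and follows essentially the same route as the paper: testing the Palais--Smale relation with $u_m/\theta$, chaining $(\textbf{\textit{H}}_4)$, $(\mathcal{M}_1)$--$(\mathcal{M}_2)$ and $(\textbf{\textit{H}}_2)$ to isolate the positive coefficient $\tfrac{\sigma_i}{p_i^+\beta_i}-\tfrac{1}{\theta_i}$, discarding the nonnegative $s_i$, $t_i$ and potential groups, and then passing from the modulars to powers $\|u_{im}\|_i^{p_i^-}$ via Proposition \ref{prop2} to contradict the at most linear growth of the left-hand side. Your two refinements --- the explicit $-C_0$ correction on the region where $(\mathcal{F}3)$ does not apply, and the direct ``one modular dominates $\|u_{im}\|_i^{p_i^-}$'' step in place of the paper's three-case analysis for $k_i^3>0$ --- are minor tightenings of the same argument, not a different method.
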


\begin{proof}
	Let $\left\lbrace u_m=(u_{1m},u_{2m},...,u_{nm})\right\rbrace $  be a Palais-Smale sequence for the Euler-Lagrange functional $E_{\lambda }$, we have
	\begin{align*}
		E_{\lambda }(u_m)&=\sum_{i=1}^{n}\widehat{M_i}\left(
		\mathcal{A}_i(u_{im}(x))\right)-
		\sum_{i=1}^{n}\displaystyle\int_{ \Omega  }\dfrac{1}{s_i(x)}|u_{im}(x)|^{s_i(x)}dx\\
		&\quad -
		\sum_{i=1}^{n}\displaystyle\int_{\partial \Omega  }\dfrac{1}{t_i(x)}|u_{im}(x)|^{t_i(x)}d\sigma_x - \int_{\Omega}\lambda(x)F(x,u_{1m}(x),u_{2m}(x)...,u_{nm}(x))dx\\
		&= C+o_m(1).
	\end{align*}
	On the other hand for all $v=(v_1,v_2,...,v_n)\in  X$, we have
	\begin{multline}\label{e3.1}
		E_{\lambda }'(u_m)v= \sum_{i=1}^{n} M_i\left(
		\mathcal{A}_i(u_{im})\right)\int_{\Omega  } a_i(|\nabla u_{im}| ^{p_i(x)}) |\nabla u_{im}| ^{p_i(x)-2}\nabla u_{im}\nabla v_i \,dx - \sum_{i=1}^{n}\int_{ \Omega  }|u_{im}|^{s_i(x)-2}u_{im} v_i\,dx \\
		\quad - \sum_{i=1}^{n}\int_{\partial \Omega  }|u_{im}|^{s_i(x)-2}u_{im} v_i\,d\sigma_x 	- \sum_{i=1}^{n}\displaystyle\int_{\Omega  }\lambda(x)F_{u_i}(x,u_{1m},u_{2m},...,u_{nm})v_i\,dx
		=o_m(1).
	\end{multline}
	Then
	\begin{align*}
		E_{\lambda }(u_m)-	E_{\lambda }'(u_m)\Big (\frac{u_m}{\theta }\Big)&\geq \sum_{i=1}^{n} \Big(
		\widehat{M_i}\left(
		\mathcal{A}_i(u_{im})\right) -
		\frac{1}{\theta_i}M_i\left(
		\mathcal{A}_i(u_{im})\right)\int_{\Omega  } a_i(|\nabla u_{im}| ^{p_i(x)}) |\nabla u_{im}| ^{p_i(x)}
		\Big)\\
		&\quad		+ \sum_{i=1}^{n} \Big( \frac{ 1}{\theta_i}-\frac{ 1}{s_i^-}\Big)\displaystyle\int_{\Omega  }|u_{im}|^{s_i(x)}dx	+ \sum_{i=1}^{n} \Big( \frac{ 1}{\theta_i}-\frac{ 1}{t_i^-}\Big)\displaystyle\int_{\partial\Omega  }|u_{im}|^{t_i(x)}d\sigma_x\\
		&\quad	 + \inf_{x\in \Omega}\lambda(x)  \displaystyle\int_{\Omega}
		\left[ \sum_{i=1}^{n}  \frac{u_{im}}{\theta_i}\frac{\partial F}{\partial u_i}(x,u_{1m},u_{2m},...,u_{nm})	-F(x,u_{1m},u_{2m},...,u_{nm})
		\right] dx.
	\end{align*}
	
Next, using  $(\textbf{\textit{H}}_4), (\mathcal{M}_1)-(\mathcal{M}_2)$ and  $(\mathcal{F}3)$, we obtain
	
	\begin{align*}
		E_{\lambda }(u_m)-	E_{\lambda }'(u_m)\Big (\frac{u_m}{\theta }\Big)	&\geq \sum_{i=1}^{n} \mathfrak{M}_i^0\Big(\frac{\sigma_i}{p_i^+ \beta}\int_{\Omega}A_i(|\nabla u_{im}| ^{p_i(x)})dx-
		\frac{1}{\theta_i }\int_{\Omega}a_i(|\nabla u_{im}|^{p_i(x)})|\nabla u_{im}|^{p_i(x)}dx
		\Big)\\
		&\geq \sum_{i=1}^{n}\Big(\frac{\sigma_i\mathfrak{M}_i^0}{p_i^+ \beta}- \frac{\mathfrak{M}_i^0}{\theta_i }\Big)\int_{\Omega}a_i(|\nabla u_{im}|^{p_i(x)})|\nabla u_{im}|^{p_i(x)}dx.
	\end{align*}	
	Therefore, by using $(\textbf{\textit{H}}_2)$, there are positive constants $C_{i1}$ and $C_{i2}$ such that
	
	\begin{align}\label{key}
		E_{\lambda }(u_m)-	E_{\lambda }'(u_m)\Big (\frac{u_m}{\theta }\Big)	&\geq \sum_{i=1}^{n} \Big[ C_{i1} \Big( \int_{\Omega}|\nabla u_{im}|^{p_i(x)}\Big) +C_{i2}\mathcal{H}(k_i^3)\Big(\int_{\Omega}|\nabla u_{im}|^{q_i(x)}\Big)\Big].
	\end{align}
	Suppose, by contradiction, that there exists a subsequence, still denoted by $\left\lbrace u_{im}\right\rbrace $, such that
	$\left\|  u_{im}\right\|_i
	\to + \infty $.\par
	
	If $k_i^3=0$, proposition \ref{prop2} gives us
	\begin{align*}
		E_{\lambda }(u_m)-	E_{\lambda }'(u_m)\Big (\frac{u_m}{\theta }\Big)&\geq  \sum_{i=1}^{n} C_{i}  \| u_{im}\|_{i}^{p_i^{-}},
	\end{align*}
	
	thus
	\begin{align*}
		C+ o_m(1)&\geq \sum_{i=1}^{n} C_{i}  \| u_{im}\|_{i}^{p_i^{-}},
	\end{align*}
	
	which is a contradiction because $p_i^->1$.  Thus, we conclude that $\left\lbrace u_{m}\right\rbrace $ is bounded in $X$.
	
	On the other hand, if $k_i^3>0$, we will need to analyze the following cases :\\
	
	$(i)  \left\|  u_{im} \right\| _{p_i(x)} \to +\infty$ and $\left\|  u_{im}\right\| _{q_i(x)} \to +\infty$  as $m \to  +\infty$;
	
	$(ii)  \left\|  u_{im} \right\| _{p_i(x)} \to +\infty$  and $\left\|  u_{im}\right\| _{q_i(x)}$ is bounded;

	$(iii)  \left\|  u_{im} \right\| _{p_i(x)}$ is bounded and $\left\|  u_{im}\right\| _{q_i(x)}\to +\infty$.
	
	In the case $(i)$, for m large enough, $\| u_{im}\|_{q_i(x)}^{q^-} \geq \| u_{im}\|_{q_i(x)}^{p^-} $ Hence, by \eqref{key}, we get
	
	\begin{align*}
		C+ o_m(1)&\geq \sum_{i=1}^{n}  \Big[ C_{i} \| u_{im}\|_{p_i(x)}^{p_i^{-}} +C_{i}\mathcal{H}(k_i^3)\| u_{im}\|_{q_i(x)}^{q_i^{-}}\Big],\\
		&\geq \sum_{i=1}^{n}   \Big[ C_{i} \| u_{im}\|_{p_i(x)}^{p_i^{-}} +C_{i}\mathcal{H}(k_i^3)\| u_{im}\|_{q_i(x)}^{p_i^{-}}\Big],\\
		&\geq \sum_{i=1}^{n} C_{i} \| u_{im}\|_{_i}^{p_i^{-}}
	\end{align*}
	which is absurd.

	In the case (ii), by \eqref{key}, we have
	
	\begin{align*}
		C+ o_m(1)&\geq \sum_{i=1}^{n} C_{i} \| u_{im}\|_{p_i(x)}^{p_i^{-}} ,
	\end{align*}
	
	Thence, since $p_i^->1$, taking limit as $m\to +\infty$, we obtain a contradiction.
	
	The case $(iii)$ is similar to case $(ii)$.
	
	Therefore, we conclude that $\left\lbrace u_{m}\right\rbrace $ is bounded in $X$.
	
\end{proof}

\begin{lemma} \label{lemma3}
	Let  $\left\lbrace u_m=(u_{1m},u_{2m},...,u_{nm})\right\rbrace_{m\in \mathbb{N}} \subset X$ be a Palais-Smale sequence with energy level
	$C_{\lambda} $, if
	$$C_{\lambda(x)}< \min \left\lbrace
	\inf_{1\leq i\leq n} \Big\{ \Big( \frac{1}{\theta_i}-\frac{1}{t_i^-}\Big )\overline{T_{x_j}}_{}^N\Big(D_{i}\Big)^{N/\gamma_i(x_j)}   \Big\} ,
	\inf_{1\leq i\leq n} \Big\{ \Big( \frac{1}{\theta_i}-\frac{1}{s_i^-}\Big )S_{i}^N\Big(D_{i}\Big)^{N/\gamma_i(x_j)} \Big\}  \right\rbrace , $$
		
wehre $D_i=\mathfrak{M}_i^0(k_i^0(1-\mathcal{H}(k_i^3)+\mathcal{H}(k_i^3)k_i^2 ))$. 	
	Then there exists a subsequence strongly convergent in $X$. $\overline{T_{x_j}}$ and $S_{i } $ are  the best positive constants of the Sobolev trace and  the Gagliardo-Nirenberg-Sobolev embedding, see \ref{GNS} and \ref{GNSS}.
\end{lemma}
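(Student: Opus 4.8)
The plan is to rule out concentration of the two critical terms by pairing the two concentration-compactness principles (Theorems \ref{ccp} and \ref{ccpp}) with a localized test-function argument, and then to upgrade the resulting weak convergence to strong convergence through the $(S_{+})$-type monotonicity encoded in $(\textbf{\textit{H}}_3)$. By Lemma \ref{lemma2} the sequence $\{u_m\}$ is bounded in $X$, so, up to a subsequence, $u_{im}\rightharpoonup u_i$ in $W^{1,p_i(x)}(\Omega)\cap W^{1,\gamma_i(x)}(\Omega)$, $u_{im}\to u_i$ a.e., and $u_{im}\to u_i$ strongly in $L^{\rho(x)}(\Omega)$ and $L^{\tau(x)}(\partial\Omega)$ whenever $\rho(x)<\gamma_i^{\star}(x)$ and $\tau(x)<\gamma_i^{\star\star}(x)$ (Propositions \ref{prop6}, \ref{prop7}). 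Passing to a further subsequence we may assume $|\nabla u_{im}|^{\gamma_i(x)}\rightharpoonup\mu_i$, $|u_{im}|^{s_i(x)}\rightharpoonup\nu_i$ in $\Omega$ and $|u_{im}|^{t_i(x)}\rightharpoonup\widetilde{\nu}_i$ on $\partial\Omega$ weakly-$\ast$ in the sense of measures. Applying Theorem \ref{ccpp} with the couple $(\gamma_i(x),s_i(x))$ and Theorem \ref{ccp} with $(\gamma_i(x),t_i(x))$ provides at most countable families $\{x_j\}_{j\in J_i^{2}}\subset\mathbf{K}^{2}_{\gamma_i}$, $\{x_j\}_{j\in J_i^{1}}\subset\mathbf{K}^{1}_{\gamma_i}$ and masses $\mu_{i,j},\nu_{i,j},\widetilde{\nu}_{i,j}>0$ such that
\begin{gather*}
 \nu_i=|u_i|^{s_i(x)}+\sum_{j\in J_i^{2}}\nu_{i,j}\delta_{x_j},\qquad
 \widetilde{\nu}_i=|u_i|^{t_i(x)}+\sum_{j\in J_i^{1}}\widetilde{\nu}_{i,j}\delta_{x_j},\\
 \mu_i\ge|\nabla u_i|^{\gamma_i(x)}+\sum_{j}\mu_{i,j}\delta_{x_j},\\
 S_i\,\nu_{i,j}^{1/\gamma_i^{\star}(x_j)}\le\mu_{i,j}^{1/\gamma_i(x_j)}\quad(j\in J_i^{2}),\qquad
 \overline{T_{x_j}}\,\widetilde{\nu}_{i,j}^{1/\gamma_i^{\star\star}(x_j)}\le\mu_{i,j}^{1/\gamma_i(x_j)}\quad(j\in J_i^{1}).
\end{gather*}

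The heart of the proof is to show that $J_i^{1}=J_i^{2}=\emptyset$ for every $i$. Suppose $x_j$ is a concentration point produced by Theorem \ref{ccpp}. Fix $\varphi\in C_c^{\infty}(\mathbb{R}^N)$ with $0\le\varphi\le1$, $\varphi\equiv1$ on $B_1(0)$, $\mathrm{supp}\,\varphi\subset B_2(0)$, and put $\varphi_{\varepsilon}(x)=\varphi\big((x-x_j)/\varepsilon\big)$. Testing $E_{\lambda}'(u_m)\to0$ against the vector whose $i$-th component is $\varphi_{\varepsilon}u_{im}$ and all of whose other components vanish, and expanding $\nabla(\varphi_{\varepsilon}u_{im})=\varphi_{\varepsilon}\nabla u_{im}+u_{im}\nabla\varphi_{\varepsilon}$, one obtains
\begin{align*}
 M_i\big(\mathcal{A}_i(u_{im})\big)\int_{\Omega}a_i(|\nabla u_{im}|^{p_i(x)})|\nabla u_{im}|^{p_i(x)}\varphi_{\varepsilon}\,dx
 &=\int_{\Omega}|u_{im}|^{s_i(x)}\varphi_{\varepsilon}\,dx+\int_{\Omega}\lambda(x)F_{u_i}(x,u_m)\varphi_{\varepsilon}u_{im}\,dx\\
 &\quad-M_i\big(\mathcal{A}_i(u_{im})\big)\int_{\Omega}a_i(|\nabla u_{im}|^{p_i(x)})|\nabla u_{im}|^{p_i(x)-2}\nabla u_{im}\cdot u_{im}\nabla\varphi_{\varepsilon}\,dx+o_m(1),
\end{align*}
the $\partial\Omega$-integral dropping out since $\mathrm{supp}\,\varphi_{\varepsilon}$ avoids $\partial\Omega$ for $\varepsilon$ small. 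From $(\mathcal{M}_1)$ and the lower estimate in $(\textbf{\textit{H}}_2)$ one has $M_i(\mathcal{A}_i(u_{im}))a_i(|\nabla u_{im}|^{p_i(x)})|\nabla u_{im}|^{p_i(x)}\ge D_i|\nabla u_{im}|^{\gamma_i(x)}$, so the left-hand side is at least $D_i\int_{\Omega}\varphi_{\varepsilon}|\nabla u_{im}|^{\gamma_i(x)}\,dx$. Letting first $m\to\infty$, so that the left side becomes $\ge D_i\int\varphi_{\varepsilon}\,d\mu_i$ while the first term on the right tends to $\int\varphi_{\varepsilon}\,d\nu_i$, and then $\varepsilon\to0$, at which point the $F$-term vanishes (being subcritical by $(\mathcal{F}2)$ and the compact embeddings) and the $\nabla\varphi_{\varepsilon}$-remainder vanishes (by the variable-exponent H\"older inequalities of Propositions \ref{prop1}, \ref{prop3} and the boundedness of $\{u_{im}\}$), we arrive at $D_i\mu_{i,j}\le\nu_{i,j}$. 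Coupling this with $S_i\nu_{i,j}^{1/\gamma_i^{\star}(x_j)}\le\mu_{i,j}^{1/\gamma_i(x_j)}$ and the identity $1-\gamma_i(x_j)/\gamma_i^{\star}(x_j)=\gamma_i(x_j)/N$ forces $\nu_{i,j}=0$ or $\nu_{i,j}\ge S_i^{N}(D_i)^{N/\gamma_i(x_j)}$; a boundary concentration point coming from Theorem \ref{ccp} is handled in exactly the same way, with $s_i,\gamma_i^{\star},S_i$ replaced by $t_i,\gamma_i^{\star\star},\overline{T_{x_j}}$, giving $\widetilde{\nu}_{i,j}=0$ or $\widetilde{\nu}_{i,j}\ge\overline{T_{x_j}}^{\,N}(D_i)^{N/\gamma_i(x_j)}$.

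To reach a contradiction I would reproduce the computation of Lemma \ref{lemma2}: from $E_{\lambda}(u_m)-\frac{1}{\theta_i}E_{\lambda}'(u_m)u_m=C_{\lambda}+o_m(1)$ together with $(\textbf{\textit{H}}_4)$, $(\mathcal{M}_1)$--$(\mathcal{M}_2)$ and $(\mathcal{F}3)$, discarding the nonnegative gradient and potential contributions leaves
\begin{align*}
 C_{\lambda}+o_m(1)&\ge\Big(\frac{1}{\theta_i}-\frac{1}{s_i^{-}}\Big)\int_{\Omega}|u_{im}|^{s_i(x)}\,dx,\\
 C_{\lambda}+o_m(1)&\ge\Big(\frac{1}{\theta_i}-\frac{1}{t_i^{-}}\Big)\int_{\partial\Omega}|u_{im}|^{t_i(x)}\,d\sigma_x.
\end{align*}
Since $\Omega$ is bounded and $\{u_{im}\}$ is bounded in the relevant Lebesgue and trace spaces, $\int_{\Omega}|u_{im}|^{s_i(x)}\,dx\to\nu_i(\overline{\Omega})\ge\nu_{i,j}$ and $\int_{\partial\Omega}|u_{im}|^{t_i(x)}\,d\sigma_x\to\widetilde{\nu}_i(\partial\Omega)\ge\widetilde{\nu}_{i,j}$, so the existence of any concentration mass would give $C_{\lambda}\ge(\frac{1}{\theta_i}-\frac{1}{s_i^{-}})S_i^{N}(D_i)^{N/\gamma_i(x_j)}$ or $C_{\lambda}\ge(\frac{1}{\theta_i}-\frac{1}{t_i^{-}})\overline{T_{x_j}}^{\,N}(D_i)^{N/\gamma_i(x_j)}$, contradicting the hypothesis on $C_{\lambda}$. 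Hence no concentration occurs, $\nu_i=|u_i|^{s_i(x)}$ and $\widetilde{\nu}_i=|u_i|^{t_i(x)}$, and Proposition \ref{prop4} yields $u_{im}\to u_i$ strongly in $L^{s_i(x)}(\Omega)$ and in $L^{t_i(x)}(\partial\Omega)$. Finally, testing $E_{\lambda}'(u_m)=o_m(1)$ with $v=u_m-u$, using these two convergences and the compactness of $\Psi_{\lambda}'$, all terms but the principal one pass to the limit, so $\limsup_m\sum_{i=1}^{n}M_i(\mathcal{A}_i(u_{im}))\int_{\Omega}a_i(|\nabla u_{im}|^{p_i(x)})|\nabla u_{im}|^{p_i(x)-2}\nabla u_{im}\cdot\nabla(u_{im}-u_i)\,dx\le0$; combining $(\mathcal{M}_1)$ with the strict monotonicity/$(S_{+})$ property contained in $(\textbf{\textit{H}}_3)$ then forces $u_{im}\to u_i$ in $W^{1,p_i(x)}(\Omega)\cap W^{1,\gamma_i(x)}(\Omega)$, that is, $u_m\to u$ in $X$.

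The step I expect to be the main obstacle is the localized test-function estimate: extracting the sharp constant $D_i$ of $(\textbf{\textit{H}}_2)$ despite the nonlocal factor $M_i(\mathcal{A}_i(u_{im}))$ and the non-homogeneous $(p(x),q(x))$-principal part, and controlling the $\nabla\varphi_{\varepsilon}$-remainder uniformly in $m$; this forces the variable-exponent H\"older and modular inequalities of Section \ref{sec2} to be applied with care, and is also the place where the precise form of the threshold on $C_{\lambda}$ is pinned down.
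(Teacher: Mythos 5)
Your proposal is correct and follows essentially the same route as the paper's proof: boundedness from Lemma \ref{lemma2}, the two concentration--compactness principles for $|u_{im}|^{s_i(x)}$ in $\Omega$ and $|u_{im}|^{t_i(x)}$ on $\partial\Omega$, the localized cut-off tests $\varphi_\varepsilon u_{im}$ with the lower bound $D_i\mu_{ij}\le\nu_{ij}$ coming from $(\mathcal{M}_1)$ and $(\textbf{\textit{H}}_2)$, the dichotomy on the atomic masses, the energy-level estimate that contradicts the threshold on $C_\lambda$, and hence strong convergence in $L^{s_i(x)}(\Omega)$ and $L^{t_i(x)}(\partial\Omega)$. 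The only divergence is the final upgrade to convergence in $X$: the paper shows $\{u_{im}\}$ is a Cauchy sequence by applying the elementary monotonicity inequalities for $\mathcal{B}_i$ to differences $u_{im}-u_{ik}$, whereas you test with $u_m-u$ and invoke an $(S_+)$-type argument based on $(\textbf{\textit{H}}_3)$ and $(\mathcal{M}_1)$ --- an equivalent use of the same monotonicity, so the two conclusions coincide.
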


\begin{proof}
	Let $\left\lbrace u_m\right\rbrace_{m\in \mathbb{N}} $ be a bounded Palais-Smale sequence for the functional	$E_{\lambda}$. By Lemma \ref{lemma1}, there is a subsequence still denoted by $\left\lbrace u_m\right\rbrace_{m\in \mathbb{N}} $ which converges weakly in $X$. So there exists positive and bounded measures $\mu_i$, $\nu_i \in  \Omega $ and $\overline{\nu}_i \in \partial \Omega $ such that
	
	\[
	|\nabla u_{im}|^{\gamma_i(x)}\rightharpoonup \mu_i , \quad
	| u_{im}|^{s_i(x)}\rightharpoonup \nu_i ~~~ \text{    and   } ~~| u_{im}|^{t_i(x)}\rightharpoonup \overline{\nu_i} .
	\]
	Hence by Theorem \ref{ccp} and Theorem \ref{ccpp}, if $\bigcup_{i=1}^{n}(J_i^1\cup J_i^2) =\emptyset $ then $u_{im} \rightharpoonup u_i$ in  $L^{s_i(x)}(\Omega)$ and $u_{im} \rightharpoonup u_i$ in  $ L^{t_i(x)}(\partial \Omega)$ with $i=1,2,...,n$. Let us show that if
	 $$C_{\lambda}< \min \left\lbrace
	\inf_{1\leq i\leq n} \Big\{ \Big( \frac{1}{\theta_i}-\frac{1}{t_i^-}\Big )\overline{T_{x_j}}_{}^N\Big(D_{i}\Big)^{N/\gamma_i(x_j)}   \Big\} ,
	\inf_{1\leq i\leq n} \Big\{ \Big( \frac{1}{\theta_i}-\frac{1}{s_i^-}\Big )S_{i}^N\Big(D_{i}\Big)^{N/\gamma_i(x_j)} \Big\}  \right\rbrace$$ and $\left\lbrace u_m\right\rbrace_{m\in \mathbb{N}} $ is a Palais-Smale sequence with energy level $C_\lambda$ then $J_i^1\cup J_i^2=\emptyset$ for all $ i \in \{1,2,...,n\}$. Suppose there exists $i \in \{1,2,3,...,n\}$ such that  $J_i^1\cup J_i^2$ is nonempty, then $J_i^1\neq\emptyset$ or $J_i^2\neq\emptyset$.
	
	Firstly assume the case $J_i^1\neq\emptyset$.
	Let $x_j\in \mathbf{K}^1_{\gamma_{i}}$ be a singular point of the measures $\mu_i$ and $\overline{\nu_i}$.
	 We consider $\phi \in C^{\infty}_0( \mathbb{R}^N, \left[ 0,1\right]  )$ such that $\left| \nabla \phi \right|_{\infty}\leq 2$ and
	\begin{eqnarray*}
		\label{ineq1}
		\phi(x)=
		\begin{cases}
			1,  &\quad\text{if }  \left|x\right| <1,\\
			
			0, 	&\quad\text{if } \left|x\right|\geq 2. \\
		\end{cases}
	\end{eqnarray*}
	
	We define, for any $\varepsilon>0$  and $j\in J_i^1$, the function	
	
	$$ 	 \phi_{j,\varepsilon} := \phi \Big(\frac{x-x_j}{\varepsilon}\Big),\quad \forall x \in  \mathbb{R}^N.$$
	
	Note that $\phi_{j,\varepsilon} \in C^{\infty}_0( \mathbb{R}^N, \left[ 0,1\right]  )$, $|\nabla \phi_{j,\epsilon}|_\infty\leq \frac{2}{\varepsilon}$ and
	\begin{eqnarray*}
		\label{ineq1}
		\phi_{j,\varepsilon}(x)=
		\begin{cases}
			1,  &  x\in  B_{}(x_j,\varepsilon),\\
			
			0, 	&  x\in \mathbb{R}^N\setminus B_{}(x_j,2\varepsilon). \\
		\end{cases}
	\end{eqnarray*}
	 Since $\left\lbrace u_{im}\phi_{j,\varepsilon}\right\rbrace $ is bounded in the space $W^{1,p_i(x)}(\Omega)\cap W^{1,\gamma_i(x)}(\Omega)$, it then follows from \ref{e3.1} that
	$E_{\lambda }'( u_{1m},..., u_{im},..., u_{nm})(0,...,u_{im}\phi_{j,\varepsilon},...,0
	)\rightarrow 0$ as $ m \rightarrow +\infty$, that is,
	we obtain
	\begin{align*}
		E_{\lambda }'(u_m)(0,...u_{im}\phi_{j,\varepsilon},...,0
		)&=   M_i\left(
		\mathcal{A}_i(u_{im})\right)\int_{\Omega  } a_i(|\nabla u_{im}| ^{p_i(x)}) |\nabla u_{im}| ^{p_i(x)-2}\nabla u_{im}\nabla (u_{im}\phi_{j,\varepsilon}) \,dx
		\\
		&	- \int_{\Omega  }|u_{im}|^{s_i(x)-2}u_{im} (u_{im}\phi_{j,\epsilon})\,dx
		- \int_{\partial \Omega  }|u_{im}|^{t_i(x)-2}u_{im} (u_{im}\phi_{j,\epsilon})\,d\sigma_x		\\
		&	-\int_{\Omega  }\lambda(x)  F_{u_i}(x,u_{1m},...,u_{im},...,u_{nm})u_{im}\phi_{j,\varepsilon}\,dx   \rightarrow 0 \text{ as } m \rightarrow +\infty.
	\end{align*}
	
	That is,
	\begin{multline}\label{3.2}
		M_i\left(
		\mathcal{A}_i(u_{im})\right)\int_{\Omega  } a_i(|\nabla u_{im}| ^{p_i(x)}) |\nabla u_{im}| ^{p_i(x)-2}\nabla u_{im}\nabla \phi_{j,\varepsilon} u_{im} \,dx = \int_{\Omega  }|u_{im}|^{s_i(x)}\phi_{j,\epsilon}\,dx
		+\int_{\partial \Omega  }|u_{im}|^{t_i(x)}\phi_{j,\epsilon}\,d\sigma_x
		 \\-M_i\left(
		\mathcal{A}_i(u_{im})\right)	
		\int_{\Omega  } a_i(|\nabla u_{im}| ^{p_i(x)}) |\nabla u_{im}| ^{p_i(x)}\phi_{j,\varepsilon} \,dx  + \int_{\Omega  }\lambda(x) F_{u_i}(x,u_{1m},...,u_{im},...,u_{nm})u_{im}\phi_{j,\varepsilon}\,dx +o_m(1).
	\end{multline}

Now, we will prove that

\begin{align}\label{3.3}
	\lim_{\varepsilon \to 0} \left\lbrace
	\limsup_{m\to +\infty } M_i\left(
	\mathcal{A}_i(u_{im})\right)\int_{\Omega  } a_i(|\nabla u_{im}| ^{p_i(x)}) |\nabla u_{im}| ^{p_i(x)-2}\nabla u_{im}\nabla\phi_{j,\varepsilon} u_{im} \,dx\right\rbrace  	& = 0.
\end{align}

We remark that, due to the hypotheses $(\textbf{\textit{H}}_2)$ enough to show that

\begin{align}\label{3.4}
	\lim_{\varepsilon \to 0} \left\lbrace
	\limsup_{m\to +\infty } M_i\left(
	\mathcal{A}_i(u_{im})\right)\int_{\Omega  } |\nabla u_{im}| ^{p_i(x)-2}\nabla u_{im}\nabla\phi_{j,\varepsilon} u_{im} \,dx\right\rbrace  & = 0.
\end{align}
and

\begin{align}\label{3.5}
	\lim_{\varepsilon \to 0} \left\lbrace
	\limsup_{m\to +\infty } M_i\left(
	\mathcal{A}_i(u_{im})\right)\int_{\Omega  } |\nabla u_{im}| ^{q_i(x)-2}\nabla u_{im}\nabla\phi_{j,\varepsilon} u_{im} \,dx\right\rbrace  & = 0.
\end{align}

First, using the H\"{o}lder inequality, we obtain
\begin{align*}
	\left| \int_{\Omega  }  |\nabla u_{im}| ^{p_i(x)-2}\nabla u_{im}\nabla\phi_{j,\varepsilon} u_{im}\,dx\right|
	&	\leq 2\left|  \left|\nabla u_{im}\right|^{p_i(x)-1}\right| _{\frac{p_i(x)}{p_i(x)-1}}\left| \nabla\phi_{j,\varepsilon} u_{im}\right| _{p_i(x)},
\end{align*}
since $\left\lbrace u_{im}\right\rbrace $ is bounded, the real-valued sequence $ \left|  \left|\nabla u_{im}\right|^{p_i(x)-1}\right| _{\frac{p_i(x)}{p_i(x)-1}}$ is also bounded, then there is a positive constant $C$, such that

\begin{align*}
	\left| \int_{\Omega  }  |\nabla  u_{im}| ^{p_i(x)-2}\nabla  u_{im}\nabla\phi_{j,\varepsilon}  u_{im} \,dx\right|
	&	\leq  C|\nabla\phi_{j,\varepsilon}  u_{im}|_{p_i(x)}.
\end{align*}

Moreover $\left\lbrace  u_{im}\right\rbrace $ is bounded in $W^{1,p_i(x)}(B_{}(x_j,2\varepsilon))$, then there exists a subsequence denoted again $\left\lbrace  u_{im}\right\rbrace $  weakly convergente to $u_i$ in $L^{p_i(x)}(B_{}(x_j,2\varepsilon))$. Hence

\begin{align*}
	\limsup_{m \to +\infty }	\left| \int_{\Omega  }  |\nabla u_{im}| ^{p_i(x)-2}\nabla u_{im}\nabla\phi_{j,\varepsilon} u_{im}\,dx\right|
	&	\leq  C|\nabla\phi_{j,\varepsilon} u_i|_{p_i(x)}\\
	&	\leq 2C \limsup_{\varepsilon \to 0}
	||\nabla\phi_{j,\varepsilon}|^{p_i(x)}|_{(\frac{p_i^{\star}(x)}{p_i(x)})^{'},B_{}(x_j,2\varepsilon)} ||u_i|^{p_i(x)}|_{\frac{p_i^{\star}(x)}{p_i(x)},B_{}(x_j,2\varepsilon)}	\\
	&	\leq 2C	\limsup_{\varepsilon \to 0 } ||\nabla\phi_{j,\varepsilon}|^{p_i(x)}|_{\frac{N}{p_i(x)},B_{}(x_j,2\varepsilon)} ||u_i|^{p_i(x)}|_{\frac{N}{N-p_i(x)},B_{}(x_j,2\varepsilon)}.
\end{align*}

Note that

\[
\int_{B_{}(x_j,2\varepsilon) }
(|\nabla\phi_{j,\varepsilon}|^{p_i(x)})^{(\frac{p_i^{\star}(x)}{p_i(x)})'}dx = \int_{B_{}(x_j,2\varepsilon) } |\nabla\phi_{j,\varepsilon}|^{N}dx
\leq \Big(\frac{2}{\varepsilon}\Big)^N meas (B_{}(x_j,2\varepsilon))=\frac{4^N}{N}\omega_N,
\]
where $\omega_N$ is the surface area of an $N$-dimensional unit sphere. As $ \int_{B_{}(x_j,2\varepsilon) } (|u_i|^{p_i(x)})^{\frac{p_i^{\star}(x)}{p_i(x)}}dx \to 0$ when $\varepsilon \to 0$, we obtain that $|\nabla\phi_{j,\varepsilon} u_i|_{p_i(x)}\to 0$, which implies
\begin{equation}\label{con}
	\lim_{\varepsilon \to 0} \left\lbrace
	\limsup_{n\to +\infty }\left|
	\int_{\Omega  }  |\nabla u_{im}| ^{p_i(x)-2}\nabla u_{im}\nabla\phi_{j,\epsilon} u_{im} \,dx \right| \right\rbrace = 0.
\end{equation}

Since  $\left\lbrace  u_{im}\right\rbrace $  is bounded in $W_0^{1,p_i(x)}(\Omega )$, we may assume that  $\mathcal{A}_i(u_{im}) \to t_i\geq 0 $ as $ m \to +\infty$. Observing that $M_i(t_i)$ is is continuous, we then have

\[
M_i\Big(\mathcal{A}_i(u_{im})\Big) \to M_i(t_i )\geq \mathfrak{M}_i^0>0, \quad \text{as } m \to +\infty.
\]

Hence, by \ref{con}, we obtain
\begin{equation}\label{3.7}
	\lim_{\varepsilon \to 0} \left\lbrace
	\limsup_{m\to +\infty }M_i\left(\mathcal{A}_i(u_{im})\right)
	\int_{\Omega  }  |\nabla u_{im}| ^{p_i(x)-2}\nabla u_{im}\nabla\phi_{j,\varepsilon} u_{im} \,dx\right\rbrace   = 0.
\end{equation}

\
Analogously, we verify \ref{3.5}. Therefore, we conclude the proof
of (\ref{3.3}).

Similarly, we can also get

\begin{equation}\label{3.8}
	\lim_{\varepsilon \to 0 }
	\int_{\Omega  }\lambda(x)\frac{\partial  F }{\partial u_i}(x,u_{1m},...,u_{im},...,u_{nm})\phi_{j,\epsilon} u_{im} dx=0, \text{ as } m \rightarrow +\infty.
\end{equation}

Indeed, using H\"{o}lder's inequality with $(\mathcal{F}2)$ and since $0\leq \phi_{j,\varepsilon} \leq 1$  we obtain

\begin{align*}
	\lim_{\varepsilon \to 0 }
	\int_{\Omega  }\lambda(x)\frac{\partial F }{\partial u_i}(x,u_{1m},...,u_{im},...,u_{nm})\phi_{j,\varepsilon} u_{im} d x &\leq
	\lim_{\varepsilon \to 0 }\|\lambda \|_{\infty}\int_{\Omega  }	\left( \sum_{j=1}^{n} b_{ij}(x)| u_jm|^{\ell_{ij}-1}\right) \phi_{j,\varepsilon} u_{im}dx\\
	&\leq \lim_{\varepsilon \to 0 }c\int_{\Omega  }	\left( \sum_{j=1}^{n} b_{ij}(x)| u_j|^{\ell_{ij}-1}\right)|\phi_{j,\varepsilon} u_{im}|dx\\
	&\leq \lim_{\varepsilon \to 0 }c_1 \Big(\sum_{j=1}^n
	|b_{ij}|_{\alpha_{ij}(x)}||u_{jm}|^{\ell_{ij}-1}|_{q_j^{\star}(x)}|\phi_{j,\epsilon} u_{im}|_{{q_i^{\star}(x)}}
	\Big).
\end{align*}

The above propositions yield
\begin{align*}
	\lim_{\varepsilon \to 0 }
	\int_{\Omega  }\lambda (x)\frac{\partial F }{\partial u_i}(x,u_{1m},...,u_{im},...,u_{nm})\phi_{j,\varepsilon} u_{im} d x &\leq \lim_{\varepsilon \to 0 }c_1
	\Big(\sum_{j=1}^n
	|b_{ij}|_{\beta_{ij}(x)}\|u_{jm}\|_{q_{j}(x)}^{\ell_{ij}-1}\Big)\| u_{im}\|_{q_i(x),B_{}(x_j,2\varepsilon)} .
\end{align*}
and this last goes to zero because of

$$\sum_{j=1}^n
|b_{ij}|_{\beta_{ij}(x)}\|u_j\|_{q_{j}(x)}^{\ell_{ij}-1}<\infty.$$
	
	On the other hand,
	
	$$ \lim_{\varepsilon \to 0}\int_{\Omega} \phi_{j,\epsilon}d\mu_{ij} =\mu_{ij} \phi(0) \qquad
	\text{ and }  \qquad
	\lim_{\varepsilon \to 0}\int_{\partial \Omega} \phi_{j,\epsilon}d\overline{\nu}_{ij} =\overline{\nu}_{ij} \phi(0),$$

and since $\textbf{K}_{\gamma_i}^1 \cap \textbf{K}_{\gamma_i}^2=\emptyset$, for $\epsilon>0$ sufficiently small
	$$\int_{\Omega  }|u_{im}| ^{s_i(x)}\phi_{j,\varepsilon}dx \to \int_{\Omega  }|u_{i}| ^{s_i(x)}\phi_{j,\varepsilon}dx,$$
	once that, when  $\epsilon\to 0$,
	
	$$\int_{\Omega  }|u_{i}| ^{s_i(x)}\phi_{j,\varepsilon}dx\to 0.$$

	Since $\phi_{j,\varepsilon}$ has compact support, going to the limit $m \to +\infty$ and letting $\varepsilon \to 0$ in \ref{3.2}, from \ref{3.3} and \ref{3.4},we obtain
	\begin{align}
		0 &=-\lim_{\varepsilon \to 0} \left[
		\limsup_{m\to +\infty } \Big( M_i\left(
		\mathcal{A}_i(u_{im})\right)\int_{\Omega  } a_i(|\nabla u_{im}| ^{p_i(x)}) |\nabla u_{im}| ^{p_i(x)}\phi_{j,\varepsilon}  \,dx\Big)\right]+\overline{\nu}_{ij}, \\
		&\leq -\mathfrak{M}_i^0\lim_{\varepsilon \to 0} \left[
		\limsup_{m\to +\infty } \Big(\int_{\Omega  } a_i(|\nabla u_{im}| ^{p_i(x)}) |\nabla u_{im}| ^{p_i(x)-2}\phi_{j,\varepsilon}  \,dx\Big)\right]+\overline{\nu}_{ij},\\	
		& \leq -\mathfrak{M}_i^0\lim_{\varepsilon \to 0} \left[
		\limsup_{m\to +\infty } \Big(\int_{\Omega  }
		(k_i^0 |\nabla u_{im}| ^{p_i(x)}+ \mathcal{H}(k_i^3)k_i^2  |\nabla u_{im}| ^{q_i(x)})
		\phi_{j,\varepsilon}  \,dx\Big)\right]+\overline{\nu}_{ij}.\label{inq3}	
	\end{align}
	
	Note that, when $k_i^3=0$, we have $\gamma_{i}(x)=p_i(x)$. Hence, by using (\ref{2.1}) we have 	
	\begin{align*}
		0 &\leq \overline{\nu}_{ij} - \mathfrak{M}_i^0k_i^0\lim_{\varepsilon \to 0}
		\int_{\Omega  }\phi_{j,\varepsilon}\,d\mu_i \\
		&\leq \overline{\nu}_{ij} -\mathfrak{M}_i^0k_i^0\mu_{ij} - \mathfrak{M}_i^0k_i^0\lim_{\varepsilon \to 0}
		\int_{\Omega  }|\nabla u_{i}| ^{p_i(x)}\phi_{j,\varepsilon}\,dx.
	\end{align*}	
	By using Lebesgue Dominated Convergence Theorem, we have
	$$\lim_{\varepsilon \to 0}
	\int_{\Omega  }|\nabla u_{i}| ^{p_i(x)}\phi_{j,\varepsilon}\,dx=0$$
	
	Then, we get	
	\begin{equation}\label{k0}
		\mathfrak{M}_i^0k_i^0\mu_{ij}	\leq \overline{\nu}_{ij}.
	\end{equation}

	On the other hand, if $k_i^3>0$, then $\gamma_i(x)=q_i(x)$ Therefore, follows from \eqref{GNS} and \eqref{inq3} that
	\begin{align*}
		0 &\leq \overline{\nu}_{ij} - \mathfrak{M}_i^0\lim_{\varepsilon \to 0} \left[ \limsup_{m \to 0}
		\Big(\int_{\Omega }\mathcal{H}(k_i^3)k_i^2|\nabla u_{im}| ^{q_i(x)}\phi_{j,\varepsilon}\,dx\Big) \right]  \\
		&\leq \overline{\nu}_{ij} -\mathfrak{M}_i^0\mathcal{H}(k_i^3)k_i^2\lim_{\varepsilon \to 0}
		\int_{\Omega  }\phi_{j,\varepsilon}\,d\mu_i\\
		&\leq \overline{\nu}_{ij} -\mathfrak{M}_i^0\mathcal{H}(k_i^3)k_i^2\mu_{ij} -\mathfrak{M}_i^0\mathcal{H}(k_i^3)k_i^2\lim_{\varepsilon \to 0} \int_{\Omega  }|\nabla u_{i}| ^{p_i(x)}\phi_{j,\varepsilon}\,dx,
	\end{align*}
and by using Lebesgue Dominated Convergence Theorem, we have
\begin{equation*}
	\lim_{\varepsilon \to 0}
	\int_{\Omega  }|\nabla u_{i}| ^{q_i(x)}\phi_{j,\varepsilon}\,dx=0.
\end{equation*}
	Then, we get
	\begin{equation}\label{k1}
		\mathfrak{M}_i^0\mathcal{H}(k_i^3)k_i^2\mu_{ij}	\leq \overline{\nu}_{ij}.
	\end{equation}
	Then, by combining \ref{k0} and \ref{k1}, we have
	$\mathfrak{M}_i^0((1-\mathcal{H}(k_i^3))k_i^0+ \mathcal{H}(k_i^3)k_i^2)\mu_{ij} \leq \overline{\nu}_{ij}$. Using \eqref{GNS}, we obtain 	
	
 	$$\overline{T}_{x_j}\overline{\nu}_{ij}^\frac{1}{\gamma_i^\star(x_j)}\leq \mu_{ij}^{\frac{1}{\gamma_i(x_j)}}\leq \Biggl(\frac{\overline{\nu}_{ij}}{\mathfrak{M}_i^0((1-\mathcal{H}(k_i^3))k_i^0+ \mathcal{H}(k_i^3)k_i^2)}\Biggr)^{\frac{1}{\gamma_i(x_j)}} .$$
	which implies that $\overline{\nu}_{ij}=0$ or $\overline{\nu}_{ij}\geq \overline{T_{x_j}}_{p_i}^N\Big(\mathfrak{M}_i^0(k_i^3(1-\mathcal{H}(k_i^3)+\mathcal{H}(k_i^3)k_i^2 ))\Big)^{N/\gamma_i(x_j)}$ for all $j \in J$.
	
	On the other hand, from the conditions $(\mathcal{M}_1)$, $(\mathcal{M}_2)$ and $(\mathcal{F}3)$, we get
	
	\begin{align*}
		C_{\lambda }&=E_{\lambda }(u_{1m},...,u_{im},...,u_{nm})-	E_{\lambda }'(u_{1m},...,u_{im},...,u_{nm})\Big (\frac{u_{1m}}{\theta_1 },..,\frac{u_{im}}{\theta_i },...,\frac{u_{nm}}{\theta_n }\Big)\\
		&=\sum_{i=1}^{n}\widehat{M_i}\left(\mathcal{A}_i (u_{im})\right)-
		\sum_{i=1}^{n}\displaystyle\int_{\Omega  }\dfrac{1}{s_i(x)}|u_{im}|^{s_i(x)}dx
		-\sum_{i=1}^{n}\displaystyle\int_{\partial\Omega  }\dfrac{1}{t_i(x)}|u_{im}|^{t_i(x)}d\sigma_x
		\\
		&\quad - \int_{\Omega}\lambda(x)F(x,u_{1m}(x),...,u_{nm}(x))dx-
		\sum_{i=1}^{n} M_i\left(\mathcal{A}_i (u_{im})\right)\int_{\Omega  }a_i(|\nabla u_{im}(x)| ^{p_i(x)}) \dfrac{|\nabla u_{im}(x)| ^{p_i(x)}}{\theta_i}  \,dx \\
		& \quad+ \sum_{i=1}^{n}\int_{\Omega  } \dfrac{| u_{im}(x)|^{s_i(x)}}{\theta_i}\,dx
		+ \sum_{i=1}^{n}\int_{\partial \Omega  } \dfrac{| u_{im}(x)|^{t_i(x)}}{\theta_i}\,d\sigma_x
		+ \sum_{i=1}^{n} \int_{\Omega  }\frac{\lambda(x)}{\theta_i}F_{u_i}(x,u_{1m}(x),...,u_{nm}(x))u_{im}\,dx + o_m(1),\\	
		&\geq \sum_{i=1}^{n}\frac{\sigma_i \mathfrak{M}_i^0}{p_i^+\beta_i} \int_{\Omega  }a_i(|\nabla u_{im}(x)| ^{p_i(x)})|\nabla u_{im}(x)| ^{p_i(x)}	
		- \sum_{i=1}^{n}\frac{1}{s_i^-} \int_{\Omega  }| u_{im}(x)| ^{s_i(x)}
		- \sum_{i=1}^{n}\frac{1}{t_i^-} \int_{\partial \Omega  }| u_{im}(x)| ^{t_i(x)}\\
		&\quad- \int_{\Omega}\lambda(x)F(x,u_{1m}(x),...,u_{nm}(x))dx
		- \sum_{i=1}^{n}\frac{\mathfrak{M}_i^0}{\theta_i} \int_{\Omega  }a_i(|\nabla u_{im}(x)| ^{p_i(x)})|\nabla u_{im}(x)| ^{p_i(x)}	\\
		&\quad	+\sum_{i=1}^{n}\frac{ 1}{\theta_i} \int_{\Omega  }|u_{im}(x)| ^{s_i(x)}
		+\sum_{i=1}^{n}\frac{ 1}{\theta_i} \int_{\partial \Omega  }|u_{im}(x)| ^{t_i(x)}
		+ \sum_{i=1}^{n} \int_{\Omega  }\frac{\lambda(x)}{\theta_i}F_{u_i}(x,u_{1m},...,u_{nm})u_{im}\,dx +o_m(1)\\	
		&\geq \sum_{i=1}^{n} \mathfrak{M}_i^0\Big( \frac{\sigma_i }{p_i^+\beta_i}-\frac{ 1}{\theta_i}\Big)\displaystyle\int_{\Omega  }a_i(|\nabla u_{im}(x)| ^{p_i(x)})|u_{im}|^{p_i(x)}dx
		+
		\sum_{i=1}^{n} \Big( \frac{ 1}{\theta_i}-\frac{ 1}{s_i^-}\Big)\displaystyle\int_{\Omega  }|u_{im}|^{s_i(x)}dx\\
		&\quad
			+
		\sum_{i=1}^{n} \Big( \frac{ 1}{\theta_i}-\frac{ 1}{t_i^-}\Big)\displaystyle\int_{\partial\Omega  }|u_{im}|^{t_i(x)}dx
			+ \inf_{ x\in\Omega}\lambda(x) \displaystyle\int_{\Omega}
		\left[ \sum_{i=1}^{n}  \frac{u_{im}}{\theta_i}\frac{\partial F}{\partial u_i}(x,u_{1m},...,u_{nm})	-F(x,u_{1m},...,u_{nm})
		\right] dx +o_m(1),
	\end{align*}

	hence
	
	$$ C_\lambda \geq \sum_{i=1}^{n} \Big( \frac{ 1}{\theta_i}-\frac{ 1}{t_i^-}\Big)\displaystyle\int_{\partial \Omega  }|u_{im}|^{t_i(x)}d\sigma_x + o_m(1)$$

Now, setting $\mathbf{K}^1_{i\delta} =\cup_{x\in \mathbf{K}^1_{\gamma_{i}}}(\textbf{B}_{\delta}(x)\cap \Omega)= \{ x\in \Omega : \text{dist}(x, \mathbf{K}^1_{\gamma_{i}})<\delta \}$, when $m\to +\infty$ we obtain
	
	\begin{align*}
		C_\lambda &
		\geq \sum_{i=1}^{n} \Big( \frac{ 1}{\theta_i}-\frac{ 1}{{t_i}_{\mathbf{K}^1_{i\delta}}^-}\Big)\Big( \int_{\Omega  }|u_i|^{t_i(x)}dx +\sum_{j\in J_i}\nu_{ij}\delta_{x_j}\Big)\\
		& \geq \sum_{i=1}^{n} \Big( \frac{ 1}{\theta_i}-\frac{ 1}{{t_i}_{\mathbf{K}^1_{i\delta}}^-}\Big)\Big( \int_{\Omega  }|u_i|^{s_i(x)}dx +(\overline{T_{x_j}})_{i}^N\Big(\mathfrak{M}_i^0(k_i^3(1-\mathcal{H}(k_i^3)+\mathcal{H}(k_i^3)k_i^2 ))\Big)^{N/\gamma_i(x_j)}Card J_i\Big)
	\end{align*}
Since $\delta > 0$ is arbitrary and $t_i$ is continuous, we have
	\begin{align*}
	C_\lambda &\geq \sum_{i=1}^{n} \Big( \frac{ 1}{\theta_i}-\frac{ 1}{{t_i}_{\mathbf{K}^1_{\gamma_{i}}}^-}\Big)\Big( \int_{\Omega  }|u_i|^{s_i(x)}dx +(\overline{T_{x_j}})_{i}^N\Big(\mathfrak{M}_i^0(k_i^3(1-\mathcal{H}(k_i^3)+\mathcal{H}(k_i^3)k_i^2 ))\Big)^{N/\gamma_i(x_j)}Card J_i\Big),
\end{align*}

	suppose that $\cup_{i=1}^{n}J_i^1\neq \emptyset$ and thus
	$$C_\lambda \geq  \inf_{1\leq i\leq n} \left\lbrace \Big( \frac{1}{\theta_i}-\frac{1}{{t_i}_{\mathbf{K}^1_{\gamma_{i}}}^-}\Big )\overline{T_{x_j}}_{p_i}^N\Big(\mathfrak{M}_i^0(k_i^3(1-\mathcal{H}(k_i^3)+\mathcal{H}(k_i^3)k_i^2 ))\Big)^{N/\gamma_i(x_j)} \right\rbrace.$$
	Therefore, if $C_\lambda < \inf_{1\leq i\leq n} \left\lbrace \Big( \frac{1}{\theta_i}-\frac{1}{{t_i}_{\mathbf{K}^1_{\gamma_{i}}}^-}\Big )\overline{T_{x_j}}_{p_i}^N(\mathfrak{M}_i^0)^{N/p_i(x_j)}   \right\rbrace$, the set $\cup_{i=1}^{n}J_i^1$ is embty, which means that
	$ |u_{im}|_{t_i(x)} \to |u_i|_{t_i(x)}$  for all $i=1,2,...,n$. Taking this together with the fact that $(u_{1m},...,u_{nm})\rightharpoonup (u_1,...,u_n)$
	in X, we have $u_{im} \to u_i$ strongly in $L^{t_i(x)}(\partial \Omega)$ for all $i \in \left\lbrace 1,2,...,n\right\rbrace $.

Now, considering $J_i^2\neq \emptyset$,	following the same steps as for the case $J_i^1$, we obtain
	
	$$C_{\lambda} \geq  \inf_{1\leq i\leq n} \left\lbrace \Big( \frac{1}{\theta_i}-\frac{1}{{s_i}_{\mathbf{K}^2_{\gamma_{i}}}^-}\Big )S_{p_i}^N\Big(\mathfrak{M}_i^0(k_i^3(1-\mathcal{H}(k_i^3)+\mathcal{H}(k_i^3)k_i^2 ))\Big)^{N/\gamma_i(x_j)} \right\rbrace.$$
	
Therefore $\cup_{i=1}^nJ_i^2= \emptyset$, which means that
$ |u_{im}|_{s_i(x)} \to |u_i|_{s_i(x)}$  for all $i=1,2,...,n$. Taking this
together with the fact that $(u_{1m},...,u_{nm})\rightharpoonup (u_1,...,u_n)$
in X, we have $u_{im} \to u_i$ strongly in $L^{s_i(x)}( \Omega)$ for all $i \in \left\lbrace 1,2,...,n\right\rbrace $.

	 On the other hand
	\begin{multline*}
		\left\langle E'_\lambda (u_{1m},...,u_{nm})-E'_\lambda (u_{1k},...,u_{nk}),(u_{1m}-u_{1k},0,...,0)	\right\rangle =\\
		\left\langle \Phi' (u_{1m},...,u_{nm})-\Phi' (u_{1k},...,u_{nk}),(u_{1m}-u_{1k},0,...,0)	\right\rangle\\
		-\left\langle \Theta  ' (u_{1m},...,u_{nm})-\Theta' (u_{1k},...,u_{nk}),(u_{1m}-u_{1k},0,...,0)\right\rangle\\
			-\left\langle	\Upsilon' (u_{1m},...,u_{nm})-\Upsilon' (u_{1k},...,u_{nk}),(u_{1m}-u_{1k},0,...,0)\right\rangle\\
		-\left\langle \Psi_{\lambda} ' (u_{1m},...,u_{nm})-\Psi_{\lambda}' (u_{1k},...,u_{nk}),(u_{1m}-u_{1k},0,...,0)	\right\rangle,
	\end{multline*}
	thus $E_\lambda'(u_{1m},...,u_{nm})\to 0$, i.e $E_\lambda '(u_{1m},...,u_{nm})$  is a Cauchy sequence in $X^\star $.
	Moreover, again by H\"{o}lder's inequality, we obtain
	\begin{align*}
		\left\langle \Theta ' (u_{1m},...,u_{nm})-\Theta' (u_{1k},...,u_{nk}),(u_{1m}-u_{1k},0,...,0)	\right\rangle\\
		=\int_{\Omega }\Big(|u_{1m}|^{s_1(x)-2}u_{1m}- |u_{1k}|^{s_1(x)-2}u_{1k}\Big)(u_{1m}-u_{1k})dx\\
		\leq \Vert  |u_{1m}|^{s_1(x)-2}u_{1m}- |u_{1k}|^{s_1(x)-2}u_{1k} \Vert_{L^{\frac{s_1(x)}{s_1(x)-1}}(\Omega)} \Vert u_{1m}-u_{1k}\Vert_{L^{s_1(x)}(\Omega)},
	\end{align*}
in a similar vein, we have
	\begin{align*}
	\left\langle \Upsilon ' (u_{1m},...,u_{nm})-\Upsilon' (u_{1k},...,u_{nk}),(u_{1m}-u_{1k},0,...,0)	\right\rangle\\
	=\int_{\partial\Omega }\Big(|u_{1m}|^{t_1(x)-2}u_{1m}- |u_{1k}|^{t_1(x)-2}u_{1k}\Big)(u_{1m}-u_{1k})d\sigma_x\\
	\leq \Vert  |u_{1m}|^{t_1(x)-2}u_{1m}- |u_{1k}|^{t_1(x)-2}u_{1k} \Vert_{L^{\frac{t_1(x)}{t_1(x)-1}}(\partial\Omega)} \Vert u_{1m}-u_{1k}\Vert_{L^{t_1(x)}(\partial \Omega)}.
\end{align*}
	
	Since $\left\lbrace u_{1m}\right\rbrace $ is a Cauchy sequence in $L^{s_1(x)}(\Omega)$ and in $L^{t_1(x)}(\partial \Omega)$, then  $ \Theta'(u_{1m},...,u_{nm})$ and $\Upsilon '(u_{1m},...,u_{nm})$ are Cauchy sequences
	in $X^\star$.

	 The compactness of $\Psi_{\lambda} '$ gives
	$$ (u_{1m},...,u_{nm})\rightharpoonup (u_1,...,u_n)  \Rightarrow
	\Psi_{\lambda}  '(u_{1m},...,u_{nm}) \rightarrow\Psi_{\lambda} ' (u_1,...,u_n),$$
	i.e. $\Psi_{\lambda} '(u_{1m},...,u_{nm})$ is a Cauchy sequence in $X^\star$.\par
	
	Therefore, according to the elementary inequalities (see, e.g., Auxiliary Results in \cite{Hurtado}) for any $ \varrho , \zeta \in \mathbb{R}^N $
	\begin{eqnarray}
		\label{ineq1}
		\begin{cases}
			|\varrho  -\zeta|^{p_i(x)}\leq c_{p_i} \Big( \mathcal{B}_i(\varrho)  -\mathcal{B}_i(\zeta) \Big) \cdot\left(\varrho -\zeta\right)  &\quad\text{if } p_i(x) \geq2\\
			
			|\varrho -\zeta|^{2}\leq c (|\varrho| +|\zeta| )^{2-p_i(x)}\Big( \mathcal{B}_i(\varrho)  -\mathcal{B}_i(\zeta) \Big)\cdot\left(\varrho -\zeta\right)  	&\quad\text{if } 1<p_i(x) <2 \\
		\end{cases}
	\end{eqnarray}
	where $\cdot$ denotes the standard inner product  in $\mathbb{R}^N.$ Replacing $\varrho$ and $\zeta$ by $\nabla u_{1m}$ and $\nabla u_{1k}$ respectively and integrating over $\Omega$, we obtain
	$$ c\int_{\Omega}|u_{1m} -u_{1k}|^{p_1(x)}dx \leq \left\langle  \Phi'(u_{1m},...,u_{nm})-\Phi'(u_{1k},...,u_{nk})
	,(u_{1m}-u_{1k},0,...,0)\right\rangle,$$ if $p_1(x) \geq2$, and if  $1<p_i(x) <2$, we get

	\begin{multline*}
		\int_{\Omega}\sigma_1(x)^{p_i(x)-2}|u_{1m} -u_{1k}|^{2}dx
		\leq  \left\langle \Phi'(u_{1m},...,u_{nm}) -\Phi'(u_{1k},...,u_{nk}), (u_{1m} -u_{1k},0,...,0)\right\rangle
	\end{multline*}
	
	where $\sigma_1(x) =C(|\nabla u_{1m}|+|\nabla u_{1k}|)$. Hence by  H\"{o}lder's inequality and by lemma , we get
	
	\begin{multline*}
		\int_{\Omega}|u_{1m} -u_{1k}|^{p_1(x)}dx
		= 	\int_{\Omega}\sigma_1^{\frac{p_1(x)(p_1(x)-2)}{2}}\Big(\sigma_1^{\frac{p_1(x)(p_1(x)-2)}{2}}|u_{1m} -u_{1k}|^{p_1(x)} \Big)dx	\\
		\leq C \Vert \sigma_1^{\frac{p_1(x)(2-p_1(x))}{2}} \Vert_{L^{\frac{2}{2-p_1(x)}}(\Omega)}
		\Vert \sigma_1^{\frac{p_1(x)(p_1(x)-2)}{2}}|u_{1m} -u_{1k}|^{p_1(x)} \Vert_{L^{\frac{2}{p_1(x)}}(\Omega)}\\
		\leq C\max\Biggl\{ \Vert \sigma_1\Vert^{[\frac{p_1(x)(p_1(x)-2)}{2}]^-}_{L^{p_1(x)}(\Omega)}, \Vert \sigma_1\Vert^{[\frac{p_1(x)(p_1(x)-2)}{2}]^+}_{L^{p_1(x)}(\Omega)}
		\Biggr\} \times\\
		\max\Biggl\{ \Big( 	\int_{\Omega}\sigma_1^{p_1(x)-2}|u_{1m} -u_{1k}|^{2}dx\Big)^{\frac{p_1^-}{2}}, \Big( 	\int_{\Omega}\sigma_1^{p_1(x)-2}|u_{1m} -u_{1k}|^{2}dx\Big)^{\frac{p_1^+}{2}}
		\Biggr\}.
	\end{multline*}

	Taking into account the fact that  $\left\lbrace u_{1m}\right\rbrace $  is bounded in
	$W^{1,p_1(x)}\cap W^{1,\gamma_1(x)}(\Omega)$
	$$\left\langle \Phi'(u_{1m},...,u_{nm}) -\Phi'(u_{1k},...,u_{nk}), ((u_{1m} -u_{1k},0,...,0)\right\rangle \to 0 \quad \text{as } ~m,k \to  \infty,$$
	we find that $\left\lbrace u_{1m}\right\rbrace $  is a Cauchy sequence in $W^{1,p_1(x)}\cap W^{1,\gamma_1(x)}(\Omega)$. We proceed similarly for $\left\lbrace u_{im}\right\rbrace $  with $\left\langle \Phi'(u_{1m},...,u_{im},...,u_{nm}) -\Phi'(u_{1k},...,u_{ik},...,u_{nk}), (0,...,u_{im}-u_{ik},0,...,0)\right\rangle$ for all  $i\in \left\lbrace 2,3,...,n \right\rbrace $.
\end{proof}
Now we are in position to prove Theorem \eqref{thm2.1}.

\begin{proof} (of Theorem \eqref{thm2.1})
	The proof is an immediate consequence of the mountain pass theorem, Lemma \ref{lemma2}  and Lemma \ref{lemma3}. Precisely, it suffices to verify that $E_\lambda$ has the mountain pass geometry and that $E_\lambda (tu_1,...,tu_n) <0$ for some $t>0$.
	
	From $(\mathcal{M}_2)$, we can obtain for $t>t_0$
		\begin{equation}\label{MMA}
		\widehat{M_i}(t)\leq \frac{\widehat{M_i}(t_0)}{t_0^\frac{1}{\sigma_i}}t^\frac{1}{\sigma_i}\leq c_it^\frac{1}{\sigma_i}.
	\end{equation}
		
	About the latter condition, we have
	\begin{align*}
		E_{\lambda }(u)&=\sum_{i=1}^{n}\widehat{M_i}\left(\mathcal{A}_i(u_i)\right)-\sum_{i=1}^{n}\displaystyle\int_{\Omega  }\dfrac{1}{s_i(x)}|u_i|^{s_i(x)}dx -\sum_{i=1}^{n}\displaystyle\int_{\partial \Omega  }\dfrac{1}{t_i(x)}|u_i|^{t_i(x)}d\sigma_x- \int_{\Omega}\lambda(x)F(x,u_1(x),...,u_n(x))dx,\\
		&\leq\sum_{i=1}^{n}\widehat{M_i}\left(\mathcal{A}_i(u_i)\right)-\sum_{i=1}^{n}\displaystyle\int_{\Omega  }\dfrac{1}{s_i(x)}|u_i|^{s_i(x)}dx-\sum_{i=1}^{n}\displaystyle\int_{\partial \Omega  }\dfrac{1}{t_i(x)}|u_i|^{t_i(x)}d\sigma_x-\inf_{ x\in\Omega}\lambda(x) \int_{\Omega}F(x,u_1(x),...,u_n(x))dx.
	\end{align*}
	
	Then, because of $\inf_{ x\in\Omega}\lambda(x) \Huge\int_{\Omega}F(x,u_1(x),...,u_n(x))dx >0$ and \ref{MMA}, we obtain for
	$(z_1,...,z_n) \in X/  \left\lbrace (0,...,0)\right\rbrace$ and any $t>1$
	
	\begin{align*}
		E_{\lambda }(tz_1,...,tz_n)&\leq
		\sum_{i=1}^{n}c_i\left(\mathcal{A}_i(tz_i)\right)^\frac{1}{\sigma_i} -\sum_{i=1}^{n}\displaystyle\int_{\Omega  }\dfrac{1}{s_i(x)}|tz_i|^{s_i(x)}dx-\sum_{i=1}^{n}\displaystyle\int_{\partial \Omega  }\dfrac{1}{t_i(x)}|t z_i|^{t_i(x)}d\sigma_x,\\	
		&\leq \sum_{i=1}^{n}c_i	 \left( \int_{\Omega }\dfrac{1}{p_i(x)}A_i(|\nabla (tz_i)|^{p_i(x)})dx\right)^\frac{1}{\sigma_i}-\sum_{i=1}^{n}\displaystyle\int_{\Omega  }\dfrac{1}{s_i(x)}|t z_i|^{s_i(x)}dx -\sum_{i=1}^{n}\displaystyle\int_{\partial \Omega  }\dfrac{1}{t_i(x)}|t z_i|^{t_i(x)}d\sigma_x,\\
		&\leq \sum_{i=1}^{n}
		c_i\Big(\displaystyle \int_{\Omega }\Big( \dfrac{k_i^1}{p_i(x)}|t\nabla z_i|^{p_i(x)}+
		\dfrac{k_i^3}{q_i(x)}|t\nabla z_i|^{q_i(x)}\Big)dx \Big)^\frac{1}{\sigma_i}
		-\sum_{i=1}^{n}\displaystyle\int_{\Omega  }\dfrac{1}{s_i(x)}|t z_i|^{s_i(x)}dx\\
		&~~-\sum_{i=1}^{n}\displaystyle\int_{\partial \Omega  }\dfrac{1}{t_i(x)}|t z_i|^{t_i(x)}d\sigma_x,\\
		&\leq \sum_{i=1}^{n}
		c_it^\frac{q_i^+}{\sigma_i}\Big(\displaystyle \int_{\Omega }\Big( \dfrac{k_i^1}{p_i(x)}|\nabla z_i|^{p_i(x)}+
		\dfrac{k_i^3}{q_i(x)}|\nabla z_i|^{q_i(x)}\Big)dx \Big)^\frac{1}{\sigma_i}
		-\sum_{i=1}^{n}t^{s_i^-}\displaystyle\int_{\Omega  }\dfrac{1}{s_i(x)}| z_i|^{s_i(x)}dx\\
		&\quad~~-\sum_{i=1}^{n}t^{t_i^-}\displaystyle\int_{\partial \Omega  }\dfrac{1}{t_i(x)}| z_i|^{t_i(x)}d\sigma_x.
	\end{align*}
	which tends to $-\infty$ as $t\to +\infty$ since $\sigma_i \geq \frac{q_i^+}{\inf \{s_i^-,t_i^-\} }$.

	On the other hand,  For all $u=(u_1,...,u_n)\in X$, under the assumptions $(\mathcal{M}_1)-(\mathcal{M}_2), (\textbf{\textit{H}}_2)$ and $(\mathcal{F}_4)$, we obtain
	
	\begin{align*}
		E_{\lambda}(u) &\geq \sum_{i=1}^{n}\mathfrak{M}_i \left[ 	\int_{\Omega}\frac{k_i^0}{p_i(x)} \left| \nabla u_i\right| ^{p_i(x)}dx + \mathcal{H}(k_i^3)k_i^2
		\int_{\Omega}\frac{1}{q_i(x)} \left| \nabla u_i\right| ^{q_i(x)}dx\right] \\
		&\quad -  \sum_{i=1}^{n}\int_{\Omega}\frac{1}{s_i(x)} \left| \nabla u_i\right| ^{s_i(x)}dx
		-  \sum_{i=1}^{n}\int_{\partial\Omega}\frac{1}{t_i(x)} \left| \nabla u_i\right| ^{t_i(x)}d\sigma_x -c\left\| \lambda \right\|_{\infty}  \int_{\Omega } \Big (\sum_{i=1}^{n} | u_i|^{r_i(x)}\Big) .
	\end{align*}
	Consider 	$0< \left\| u\right\| =
	\sum_{i=1}^{n}\left\| u_i\right\|_i= \rho <1$ with $\left\| u_i\right\|_i=\left\| \nabla u_i\right\|_{p_i(x)} +\mathcal{H}(k_i^3) \left\| \nabla u_i\right\|_{q_i(x)}$. By Propositons \eqref{prop1}, \eqref{prop3} and \eqref{prop4}, we have

	\begin{align*}
		E_{\lambda}(u) &\geq 	\sum_{i=1}^{n}c_i\Big(\left\| \nabla u_i\right\|_{p_i(x)}^{q^{+}} +\mathcal{H}(k_i^3)  \left\| \nabla u_i\right\|_{q_i(x)}^{q^{+}} \Big)	- \sum_{i=1}^{n}\frac{c_{1i}}{s_i^{-}}\left\|  u_i\right\|_{i}^{s_i^{-}}  	- \sum_{i=1}^{n}\frac{c_{2i}}{t_i^{-}}\left\|  u_i\right\|_{i}^{t_i^{-}}
		-\sum_{i=1}^{n}c_{3i}\left\| \lambda \right\|_{\infty} \Vert  u_i \Vert^{r_i^-}_{i}\\
		&\geq 	\sum_{i=1}^{n}\Bigl( c_i\left\|  u_i\right\|_{i}^{q_i^{+}}
		-\frac{c_{1i}}{s_i^{-}}\left\|  u_i\right\|_{i}^{s_i^{-}}
		-\frac{c_{2i}}{t_i^{-}}\left\|  u_i\right\|_{i}^{t_i^{-}} - c_{3i}\left\| \lambda \right\|_{\infty} \Vert  u_i \Vert^{r_i^-}_{i}
		\Bigr)
	\end{align*}

	Hence, since 	$q_i^{+}< r_i^{- }< \inf\{s_i^{-},t_i^{-}\}$, follows that there are $0<\rho <1$ small enough and $ \mathcal{R}>0$ such that
	
	$$E_{\lambda}(u)\geq \mathcal{R}>0 \quad \text{as} \left\| u\right\| =\rho.$$
	
	That means the existence of an element $(u_1^0,...,u_n^0)$ of $X$ such that $ E_{\lambda}(u_1^0,...,u_n^0)<0$. Consequentely, the critical value is
	$$C_\lambda := \inf_{\xi \in \Gamma} \sup_{t\in \left[ 0,1\right]  }E_\lambda(\xi(t)),$$
	where
	$$\Gamma= \left\lbrace  \xi : \left[ 0,1\right] \to X, \text{continuous and } \xi(0)=(0,...,0), \xi (1)=(u_1^0,...,u_n^0)\right\rbrace. $$
	That concludes the proof.
\end{proof}
Next we will prove under some symmetry condition on the function $F$ that \ref{s1.1} possesses infinitely many  nontrivial solutions.
\begin{proof} (of Theorem \eqref{thm2.2})
	We will use a $\mathbb{Z}_2$-symmetric version of the Mountain Pass theorem \ref{SPMT}, to accomplish the proof of theorem \ref{thm2.2}. By assumption the function $F$ is even, the functional $E_{\lambda}$ is even too. Considering the proof of theorem \ref{thm2.1}, we need only check the condition ($\mathcal{I}_2 '$). In fact by using \ref{MMA} and  $\inf_{ x\in\Omega}\lambda(x)\Huge\int_{\Omega}F(x,u_1(x),...,u_n(x))dx >0$, we obtain
	
	\begin{align*}
		E_{\lambda }(u_1,...,u_n)&\leq
		\sum_{i=1}^{n}c_i\left(\mathcal{A}_i(u_i)\right)^\frac{1}{\sigma_i} -\sum_{i=1}^{n}\displaystyle\int_{\Omega  }\dfrac{1}{s_i(x)}|u_i|^{s_i(x)}dx
		-\sum_{i=1}^{n}\displaystyle\int_{\partial \Omega  }\dfrac{1}{t_i(x)}|u_i|^{t_i(x)}d\sigma_x
		,\\	
		&\leq \sum_{i=1}^{n}
		c_i\Big(\displaystyle \int_{\Omega }\Big( \dfrac{k_i^1}{p_i^-}|\nabla u_i|^{p_i(x)}+
		\dfrac{k_i^3}{q_i^-}|\nabla u_i|^{q_i(x)}\Big)dx \Big)^\frac{1}{\sigma_i}
		-\sum_{i=1}^{n}\displaystyle\int_{\Omega  }\dfrac{1}{s_i^+}| u_i|^{s_i(x)}dx\\
&	\quad	-\sum_{i=1}^{n}\displaystyle\int_{\partial\Omega  }\dfrac{1}{t_i^+}| u_i|^{t_i(x)}dx.
	\end{align*}
	
	Let $u=(u_1,...,u_n) \in X$ be arbitrary but fixed. we define
	$$ \Omega_{<}=\left\lbrace x\in \Omega  : \left| u_i(x)\right| <1\right\rbrace, \quad \text{ } \Omega_{\geq}=\Omega  \backslash \Omega_{<},$$
	$$ \partial\Omega_{<}=\left\lbrace x\in \Omega  : \left| u_i(x)\right| <1\right\rbrace,  \text{ and } \partial \Omega_{\geq}=\partial \Omega  \backslash \Omega_{<}.$$
	
	Then we have
	
	\begin{align*}
		E_{\lambda }(u_1,...,u_n)	
		&\leq \sum_{i=1}^{n}
		c_i\Big(\displaystyle \int_{\Omega }\Big( \dfrac{k_i^1}{p_i^-}|\nabla u_i|^{p_i(x)}+
		\dfrac{k_i^3}{q_i^-}|\nabla u_i|^{q_i(x)}\Big)dx \Big)^\frac{1}{\sigma_i}
		-\sum_{i=1}^{n}\displaystyle\int_{\Omega  }\dfrac{1}{s_i^+}| u_i|^{s_i(x)}dx \\
		&	\quad	-\sum_{i=1}^{n}\displaystyle\int_{\partial\Omega  }\dfrac{1}{t_i^+}| u_i|^{t_i(x)}dx,\\
		&\leq \sum_{i=1}^{n}
		c_i\Big(\displaystyle \int_{\Omega }\Big( \dfrac{k_i^1}{p_i^-}|\nabla u_i|^{p_i(x)}+
		\dfrac{k_i^3}{q_i^-}|\nabla u_i|^{q_i(x)}\Big)dx \Big)^\frac{1}{\sigma_i}
		-\sum_{i=1}^{n}\displaystyle\int_{\Omega_{\geq}  }\dfrac{1}{s_i^+}| u_i|^{s_i^-}dx\\
		&	\quad -\sum_{i=1}^{n}\displaystyle\int_{\partial \Omega_{\geq}  }\dfrac{1}{t_i^+}| u_i|^{t_i^-}d\sigma_x,\\
		&\leq \sum_{i=1}^{n}
		c_i\Big(\displaystyle \int_{\Omega }\Big( \dfrac{k_i^1}{p_i^-}|\nabla u_i|^{p_i(x)}+
		\dfrac{k_i^3}{q_i^-}|\nabla u_i|^{q_i(x)}\Big)dx \Big)^\frac{1}{\sigma_i}-\sum_{i=1}^{n}\displaystyle\int_{\Omega}\dfrac{1}{s_i^+}| u_i|^{s_i^-}dx\\
		&\quad+\sum_{i=1}^{n}\displaystyle\int_{\Omega_{<}  }\dfrac{1}{s_i^+}| u_i|^{s_i^-}dx
		-\sum_{i=1}^{n}\displaystyle\int_{\partial \Omega}\dfrac{1}{t_i^+}| u_i|^{t_i^-}d\sigma_x
	+\sum_{i=1}^{n}\displaystyle\int_{\partial \Omega_{<}  }\dfrac{1}{t_i^+}| u_i|^{t_i^-}d\sigma_x	,\\
		&\leq \sum_{i=1}^{n}
		c_i\Big(\displaystyle \int_{\Omega }\Big( \dfrac{k_i^1}{p_i^-}|\nabla u_i|^{p_i(x)}+
		\dfrac{k_i^3}{q_i^-}|\nabla u_i|^{q_i(x)}\Big)dx \Big)^\frac{1}{\sigma_i}-\sum_{i=1}^{n}\displaystyle\int_{\Omega}\dfrac{1}{s_i^+}| u_i|^{s_i^-}dx\\
		&	\quad -\sum_{i=1}^{n}\displaystyle\int_{\partial\Omega}\dfrac{1}{t_i^+}| u_i|^{t_i^-}d\sigma_x.
	\end{align*}
	The functional $\left| .\right|_{s_i^-}: X_i\to \mathbb{R}$ defined by
	$$\left| u_i \right|_{s_i^-} = \Big(\int_{\Omega}| u_i|^{s_i^-}dx\Big)^{1/s_i^-}$$
	is a norm in $X_i$. Let $X_i^1$ be a fixed finite dimensional subspace of $X_i$. Then $\left| .\right|_{s_i^-}$ and $\left\| .\right\| _i$ are equivalent norms on  $X_i$, so there exists a positive constant $c_{1i}=c( X_i^1)$ such that
	
	$$ \left\| u_i\right\|_i^{s_i^-} \leq c_{1i}\left| u_i\right|_{s_i^-}^{s_i^-}, \text{ for all } u_i \in X_i^1,$$	
and similarly, we can see that  $ \left\| u_i\right\|_i^{t_i^-} \leq c_{2i}\left| u_i\right|_{t_i^-}^{t_i^-}, \text{ for all } u_i \in X_i^1,$
	
	Assume $\left\| u_i\right\|_i>1$  for convenience. According to proposition \ref{prop2} and proposition \ref{prop6}, for any $u\in S_1$ we obtain
	
	$$ 0\leq E_\lambda(u)\leq \sum_{i=1}^{n} \Big(c_i\left\| u_i\right\|_i^{q_i^+/\sigma_i}
	-c_{1i} \left\| u_i\right\|_i^{s_i^-}
		-c_{2i} \left\| u_i\right\|_i^{t_i^-}\Big)
	$$
	since $\inf \{s_i^-,t_i^- \}> q_i^+/\sigma_i$ we conclude that $S_1$ is bounded in $X$.

\end{proof}

\subsection*{Acknowledgements}
The secoond author wish to thank GNAMPA and the RUDN University Strategic Academic Leadership Program.
%\newpage

\end{document}